\newcommand{\lb}{[}
\newcommand{\rb}{]}
\newcommand{\rp}{)}
\newcommand{\pol}{\text{pol}}
\newcommand{\ipv}[2]{\left( #1, #2 \right)}
\newcommand{\ipc}{\ip{\cdot}{\cdot}}
\newcommand{\ad}{\text{ad}}
\newcommand{\per}{\text{per}}
\newcommand{\type}{\text{abc}}
\newcommand{\res}{\text{res}}
\DeclareMathOperator{\Diff}{Diff}
\DeclareMathOperator{\Ad}{Ad}
\DeclareMathOperator{\sign}{sign}
\renewcommand{\T}{\ensuremath{\mathbb{T}}\xspace}
\theoremstyle{\mythmstyle}
\numberwithin{equation}{section}
\begin{document}

\mymaketitle

\section{Introduction}
\label{sec:intro}

In \cite{math/0809.3104} we introduced the concept of a \emph{co\hyp{}orthogonal structure} on an infinite dimensional vector bundle and showed how this led to the construction of an infinite dimensional Dirac operator.
In brief, whilst an orthogonal structure on a vector bundle defines a Hilbert completion of each fibre, a co\hyp{}orthogonal structure defines a dense Hilbert subspace.
In this paper we shall show that the space of smooth loops in a Riemannian manifold admits a co\hyp{}Riemannian structure\emhyp{}that is, a co\hyp{}orthogonal structure on its tangent bundle\emhyp{}and that this is suitable for the construction of the Dirac operator to work.

We classified co\hyp{}orthogonal and orthogonal structures in \cite{math/0809.3104} according to certain criteria\emhyp{}refining the ``weak\enhyp{}strong'' classification.
In terms of this classification we prove the following theorem.

\begin{thm}
\label{th:coriem}
Let \(M\) be a finite dimensional Riemannian manifold.
Let \(L M \coloneqq \Ci(S^1, M)\) be the space of smooth loops in \(M\).
This space admits a nuclear, locally equivalent, co\hyp{}Riemannian structure with structure group \(\gl_\res\).
\end{thm}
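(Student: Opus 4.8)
The plan is to take the family of \(L^2\) inner products on the tangent spaces of \(LM\) as the co\hyp{}Riemannian structure, and to establish each of its three claimed properties by computing the transition functions of \(TLM\) in the standard exponential\hyp{}map atlas. Recall first that \(LM\) is a smooth manifold (in the Fr\'echet, or convenient, sense) with \(T_\gamma LM \cong \Gamma(S^1,\gamma^*TM)\), the space of smooth sections of the pulled\hyp{}back tangent bundle, and that the metric \(g\) of \(M\) induces on each such tangent space the weak inner product \(\langle X,Y\rangle_\gamma = \int_{S^1} g_{\gamma(s)}\bigl(X(s),Y(s)\bigr)\,ds\), whose fibrewise Hilbert completion is \(L^2(S^1,\gamma^*TM)\). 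I would take this family of completions, equipped with the Fourier\hyp{}type polarisation described below, as the co\hyp{}orthogonal structure on \(TLM\); that \(\gamma\mapsto\langle\cdot,\cdot\rangle_\gamma\) is smooth is routine.

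Next I would fix an adapted atlas. For each \(\gamma_0\in LM\) the map \(X\mapsto\bigl(s\mapsto\exp_{\gamma_0(s)}X(s)\bigr)\) is a chart \(\Phi_{\gamma_0}\) from a neighbourhood of \(0\) in \(\Gamma(\gamma_0^*TM)\) onto a neighbourhood of \(\gamma_0\) in \(LM\). Choosing a smooth trivialisation of \(\gamma_0^*TM\) over \(S^1\)---by \(S^1\times\mathbb{R}^n\), or, if \(\gamma_0^*TM\) is non\hyp{}orientable, by the twisted version, which merely replaces integer Fourier modes by half\hyp{}integer ones and is harmless---identifies the model with \(\Ci(S^1,\mathbb{R}^n)\), carrying the standard \(L^2\) inner product and (after complexifying, with due care for the reality structure) the polarisation of \(L^2(S^1,\mathbb{C}^n)\) into non\hyp{}negative and negative Fourier modes. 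Because \(\exp\) and its local inverse act pointwise in \(s\), the transition maps \(\Phi_{\gamma_1}^{-1}\circ\Phi_{\gamma_0}\) have the form \(X(s)\mapsto f_s\bigl(X(s)\bigr)\), so the transition functions of \(TLM\) are, at each point, the multiplication operators \(M_g\) by a smooth loop \(g\colon S^1\to\gl_n\) depending smoothly on the base point.

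The heart of the argument is then the classical Pressley--Segal estimate: multiplication by a smooth loop \(g\colon S^1\to\gl_n\) extends to a bounded invertible operator on \(L^2(S^1,\mathbb{C}^n)\) lying in \(\gl_\res\), with off\hyp{}diagonal blocks \([M_g,P]\) (relative to the polarisation projection \(P\)) that are smoothing, hence in every Schatten ideal; since smoothing operators are as regular as possible, this is the strongest (``nuclear'') case of the classification of \cite{math/0809.3104}, and it also shows that the polarisation class does not depend on the trivialisation chosen, so the structure is well defined. Consequently the co\hyp{}orthogonal structure so obtained has structure group \(\gl_\res\) and is nuclear. For local equivalence, observe that in the chart \(\Phi_{\gamma_0}\) the transported inner product is the standard \(L^2\) product twisted by a positive\hyp{}definite matrix \(h\) depending smoothly on \(s\in S^1\) and on the base point; composing \(\Phi_{\gamma_0}\) with the fibrewise positive square root of \(h\)---again multiplication by a smooth loop in \(\gl_n\), hence within \(\gl_\res\)---yields a local trivialisation in which the co\hyp{}orthogonal structure is the standard one on \(\Ci(S^1,\mathbb{R}^n)\subset L^2(S^1,\mathbb{R}^n)\). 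Patching these data together proves the theorem.

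I expect the real work to lie not in any single step but in two places. First, one must check---in the convenient calculus---that the assignments \(\gamma_0\mapsto\Phi_{\gamma_0}\), \(X\mapsto g_X\), \(X\mapsto h^X\) and \(X\mapsto(h^X)^{1/2}\) are all smooth and patch together to give an honest reduction of structure group in the precise technical sense of \cite{math/0809.3104}; this is the main obstacle, being the point at which the classification machinery must be fed exactly the right input. Second, one must handle with care the real\hyp{}versus\hyp{}complex nature of the polarisation, since \(\gl_\res\) and the nuclearity conditions are naturally phrased for a polarised complex Hilbert space whereas \(\gamma^*TM\) is real. Everything else is either standard Fr\'echet\hyp{}manifold bookkeeping or the cited classical bound on the commutator of a multiplication operator with the Hardy projection.
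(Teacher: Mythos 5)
Your construction goes in the wrong direction: what you build is the \emph{orthogonal} structure, not the \emph{co\hyp{}orthogonal} one. Taking the fibrewise \(L^2\) inner products and their Hilbert \emph{completions} \(L^2(S^1,\gamma^*TM)\) gives precisely the standard Riemannian structure on \(LM\), which the paper points out is already a nuclear, locally trivial, orthogonal structure with structure group \(O_\res\) (a vector bundle map \(TLM\to F\) into a Hilbert bundle). Theorem~\ref{th:coriem} asserts something different: a co\hyp{}orthogonal structure assigns to each tangent space a \emph{dense Hilbert subspace} with a finer topology, i.e.\ a bundle of Hilbert spaces \(E\) with a map \(E\to TLM\) that locally looks like the inclusion of a fixed Hilbert space (here \(L^2_r\R^n\), the loops extending holomorphically over an annulus and square integrable on its boundary) into \(L\R^n\), with that inclusion nuclear. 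Nothing in your argument produces such a subspace, and the Pressley--Segal multiplication\hyp{}operator estimate you invoke is exactly the ingredient for the completion picture, not for the subspace picture.

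Moreover, the obstacle you would hit if you tried to repair this within your chart\hyp{}by\hyp{}chart framework is the one the paper is organised around: \(L\gl_n(\R)\) preserves \emph{no} Hilbert subspace of \(L\R^n\), so the transition functions of \(TLM\) in the exponential atlas (multiplication by arbitrary smooth loops in \(\gl_n\)) cannot act on any candidate fibre such as \(L^2_r\R^n\), and there is no well\hyp{}defined subbundle to be had this way. The paper instead reduces the structure group to the \emph{polynomial} loop group \(L_\pol G\) (\(G=U_n,SU_n,SO_n\)), which does act on \(L^2_r V\) through \(\gl_\res\), and it achieves this reduction globally by using the holonomy of the Levi--Civita connection to pull back an explicitly constructed universal polynomial path bundle \(P_\pol G\to G\); the bulk of the work is proving local triviality of that bundle (via exp\hyp{}regularity, unitary structures for \(SO_n\), etc.) and then transporting \(L_\pol\F^n\subseteq L^2_r\F^n\subseteq L\F^n\) along parallel transport to get \(L_\pol TM\subseteq L^2_rTM\subseteq LTM=TLM\). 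Also note that ``nuclear'' in the paper's classification refers to the inclusion \(L^2_rV\to LV\) being a nuclear map, not to Schatten\hyp{}class commutators with the Hardy projection. None of these steps is present in, or recoverable from, your proposal, so it does not prove the theorem.
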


This definition has the following interpretation.
There is a bundle of Hilbert spaces, say \(E \to L M\), with structure group \(\gl_\res\) and a vector bundle map \(E \to T L M\).
There are simultaneous trivialisations of \(E\) and of \(T L M\) with respect to their structure groups such that \(E \to T L M\) locally looks like the inclusion of a fixed Hilbert space in the model space of \(T L M\) (this model space being \(L \R^n\)) and that this inclusion is a nuclear map.
Under this local trivialisation, the inner product on \(E\) is not taken to some fixed inner product on the typical fibre of \(E\).
The group \(\gl_\res\) is the \emph{restricted general linear group} of \cite{apgs}.

Theorem~\ref{th:coriem} refers to \(T L M\) with its standard structure group, namely \(L \gl_n\).
If we are prepared to work with a slightly larger topological group then we can replace the words ``locally equivalent'' by ``locally trivial'' in Theorem~\ref{th:coriem}.
This means that under the simultaneous trivialisation of \(E\) and \(T L M\), the inner product on \(E\) is taken to some fixed inner product on the typical fibre of \(E\).

Having shown this, it is straightforward to show that if \(M\) is a string manifold then \(L M\) has the required structure to define a Dirac operator.

\begin{thm}
Let \(M\) be a finite dimensional string manifold.
Then \(L M\) is an \(S^1\)\enhyp{}equivariant, per\hyp{}Hilbert\enhyp{}Schmidt, locally equivalent, co\hyp{}spin manifold.
\end{thm}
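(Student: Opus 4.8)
The entire construction sits on top of Theorem~\ref{th:coriem}. That theorem already supplies a nuclear, locally equivalent, co-Riemannian structure on $LM$, so ``locally equivalent'' is inherited verbatim; a nuclear inclusion is a fortiori Hilbert--Schmidt; and the polarisation used in the proof of Theorem~\ref{th:coriem} --- the splitting of the model fibre $L\R^n$ into strictly positive and non-positive Fourier modes --- is exactly the \tper polarisation in the classification of \cite{math/0809.3104}. Hence the underlying co-orthogonal structure is per-Hilbert--Schmidt with no further work. What genuinely has to be produced is: (1) a reduction of the structure group of the Hilbert bundle $E \to LM$ from $\gl_\res$ to the restricted orthogonal group $O_\res$; (2) a lift of the resulting $O_\res$-structure through the basic central extension $\widetilde{O_\res} \to O_\res$ --- that is, a co-spin structure --- using the string structure on $M$; and (3) the verification that (1) and (2) can be carried out compatibly with the circle action on $LM$ by rotation of loops.

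Step (1) is formal. A co-Riemannian structure is by definition a co-orthogonal structure on $TLM$, so $E$ already carries a fibrewise inner product; its structure group is merely $\gl_\res$ rather than $O_\res$ because the admissible local trivialisations need not be isometric. Polar decomposition gives a homeomorphism $\gl_\res \cong O_\res \times P$, where $P \subset \gl_\res$ is the set of positive definite invertible elements, and $P$ is contractible --- it deformation retracts onto the identity along $p \mapsto (1-t)p + t\cdot\mathrm{id}$, which stays positive definite, invertible and of restricted type for every $t \in [0,1]$. The bundle of reductions associated to $E$ therefore has contractible fibre, hence admits a section (unique up to homotopy), and $E$ acquires structure group $O_\res$. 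Since a string manifold is in particular oriented, this structure group in fact lands in the identity component of $O_\res$, which is the one carrying the central extension used in step (2).

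Step (2) is the heart of the matter. Define a co-spin structure on $LM$ to be a lift of the $O_\res$-structure on $E$ through the basic central extension $\widetilde{O_\res} \to O_\res$ of \cite{apgs}, the restricted counterpart of the Pressley--Segal extension. The obstruction to such a lift is a class $\omega \in H^3(LM;\Z)$ --- of degree three because the extension is by the circle. The decisive fact is that $\omega$ is the transgression of $\tfrac12 p_1(M) \in H^4(M;\Z)$ along the evaluation map $\mathrm{ev}\colon S^1 \times LM \to M$. This is the Killingback--Witten mechanism rephrased in the present language: since $M$ is spin, the $O_\res$-structure on $E$ factors through the loop group $L\operatorname{Spin}_n$, and the pullback to $LM$ of the basic central extension of $L\operatorname{Spin}_n$ is classified precisely by the transgressed first fractional Pontryagin class. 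A string structure on $M$ is a trivialisation of $\tfrac12 p_1(M)$ --- at the level of the classifying gerbe, equivalently a lift of the orthonormal frame bundle to $\operatorname{String}_n$ --- so transgressing it trivialises $\omega$, and that trivialisation is the required lift, i.e. the co-spin structure.

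For step (3), the circle acts on $LM$ by $(\theta\cdot\gamma)(t) = \gamma(t+\theta)$. The co-Riemannian structure of Theorem~\ref{th:coriem} is natural and rotation merely reparametrises the loop, acting diagonally on the Fourier modes and hence preserving the polarisation, so it is $S^1$-equivariant to start with; the bundle of reductions in step (1) is then $S^1$-equivariant with contractible fibres and so carries an equivariant section; and in step (2) the evaluation map and the transgression are $S^1$-natural while the string structure lives on $M$, where the circle acts trivially, so its transgression --- and hence the lift --- is automatically $S^1$-equivariant. Combining this with steps (1) and (2) and the opening paragraph gives the theorem. The one point I expect to demand real care is inside step (2): one must identify the central extension coming from the co-orthogonal framework of \cite{apgs} with the one whose transgression-theoretic content is the fractional Pontryagin class, keep careful track of whether the relevant extension is by $S^1$ (obstruction in $H^3(LM;\Z)$) or by $\Z/2$, and check that passing from the ambient Hilbert bundle to its dense subbundle $E$ --- the ``co'' of co-spin --- leaves the lifting problem unchanged.
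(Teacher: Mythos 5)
There is a genuine gap, and it sits exactly where you yourself flag ``a point I expect to demand real care'': the verification that the lifting problem on the dense Hilbert subbundle \(E = L^2_r T M\) is the same as the classical one. In the paper that verification is not an afterthought\emhyp{}it \emph{is} the proof. The paper takes as input (essentially as the definition of ``string'': structure on \(M\) sufficient to make \(L M\) spin for its usual orthogonal structure, via the gerbe/transgression picture you describe) a spin structure on the completed bundle \(L^2 T M\), and then \emph{transfers} it to the co\hyp{}orthogonal bundle \(L^2_r T M\) using the isomorphism \(\cos_r D_\gamma \colon L^2_r T M \to L^2 T M\) constructed earlier in the paper; because each inner product on \(L^2_r T M\) was \emph{defined} so as to make this map an isometric isomorphism, it is an isomorphism of \(S O_J\)\enhyp{}bundles and the \(\spin_J\)\enhyp{}lift pulls back. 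Your proposal instead runs the Killingback\hyp{}style obstruction argument directly on \(E\), asserting that the obstruction is the transgressed \(\tfrac12 p_1(M)\); but \(E\) is a different bundle from \(L^2 T M\), related to \(T L M\) only through a non\hyp{}isometric, \(\gl_\res\)\enhyp{}valued trivialisation, so that assertion needs precisely the identification you postpone. Without the \(\cos_r D_\gamma\) isometry (or an equivalent comparison), the claim that the \(O_\res\)\enhyp{}structure on \(E\) ``factors through \(L\spin_n\)'' and has the classical obstruction class is unsubstantiated\emhyp{}note in particular that the structure group acting on \(L^2_r\) in the paper is the \emph{polynomial} loop group, not the smooth one, so the standard loop\hyp{}group transgression statement does not apply verbatim.

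A second point your argument never addresses is the distinction the paper makes between \(O_\res\) (defined via an operator on the \emph{complexification}) and the groups \(O_J\), \(S O_J\) defined by a real polarisation, which are the ones carrying the central extension \(\spin_J\). These agree only when the fibre dimension is even; this is exactly why the paper's proposition assumes \(M\) has even dimension, and your reduction-plus-lift runs through \(O_\res\) as if the extension lived there. Two smaller remarks: your step (1) is not needed in the paper's framework, since a co\hyp{}orthogonal structure by definition already hands you the Hilbert bundle with its fibrewise inner product, which is what reduces \(\gl_J\) to \(S O_J\); and the \(S^1\)\enhyp{}equivariance is not just ``rotation acts diagonally on Fourier modes''\emhyp{}the paper proves the bundle\hyp{}level statement that the subgroup \(S^1 \rtimes \Z/2 \subseteq \Diff(S^1)\) preserves \(L_\pol T M\) (via \(D_{\gamma \circ \sigma}(\alpha \circ \sigma) = \pm (D_\gamma \alpha)\circ\sigma\)), which is the fact your equivariance claim actually rests on.
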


Again, if we are prepared to modify the structure group of the tangent bundle of \(L M\) we have a locally trivial co\hyp{}spin manifold. 

Combined with the work of \cite{math/0809.3104} this yields the following important corollary.

\begin{cor}
Let \(M\) be a finite dimensional string manifold.
Then \(L M\) admits an \(S^1\)\enhyp{}equivariant Dirac operator.
\end{cor}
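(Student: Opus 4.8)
The plan is to feed the structure produced by the previous theorem directly into the machinery of \cite{math/0809.3104}. Recall that in that paper a per-Hilbert-Schmidt, locally equivalent co-spin manifold $X$ was shown to carry a canonically associated spinor bundle --- built from the co-orthogonal structure on $TX$ via the restricted Clifford algebra and the spin representation of (a central extension of) $\gl_\res$ --- together with a densely-defined, first-order, formally self-adjoint operator on the sections of that bundle; the co-orthogonal structure is precisely what supplies the dense domain and the compatibility of inner products that make this operator well-defined. So the first step is just to check that $L M$ meets every hypothesis of that construction, and this is exactly the content of the preceding theorem: $L M$ is co-spin (the $\gl_\res$-reduction lifts through the relevant central extension, using that $M$ is string), the comparison map $E \to T L M$ is per-Hilbert-Schmidt, and the co-Riemannian structure is locally equivalent, which is the regularity \cite{math/0809.3104} needs in order for the local models of the spinor bundle to glue.

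The second step is to carry the circle action along. The circle acts on $L M$ by rotation of loops, and the previous theorem asserts that all of the co-Riemannian data --- the Hilbert bundle $E$, the map $E \to T L M$, and the $\gl_\res$-reduction, as well as the co-spin lift --- is $S^1$-equivariant. Because the assignment of spinor bundle and Dirac operator to a co-orthogonal manifold in \cite{math/0809.3104} is natural, applying it to this equivariant data yields an $S^1$-equivariant spinor bundle and an operator that intertwines the induced $S^1$-actions on its domain and target; that is, an $S^1$-equivariant Dirac operator. The corollary then follows by combining the previous theorem with \cite{math/0809.3104}.

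I expect the one point needing care to be the bookkeeping at the level of the central extension: one must know that the $S^1$-equivariant co-spin structure on $L M$ gives an honest $S^1$-equivariant principal bundle for the extended group, not merely an $S^1$-action commuting with the central $\T$ up to a cocycle, and that the spin representation employed in \cite{math/0809.3104} carries a compatible $S^1$-action so that the associated spinor bundle is genuinely equivariant. Granting this --- which is built into the way the string condition enters the previous theorem --- the Dirac operator of \cite{math/0809.3104} inherits the $S^1$-symmetry and the corollary is proved.
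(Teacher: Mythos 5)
Your proposal is correct and matches the paper's treatment: the paper marks this corollary as following immediately from the preceding theorem (that \(L M\) is an \(S^1\)\enhyp{}equivariant, per\hyp{}Hilbert\enhyp{}Schmidt, locally equivalent co\hyp{}spin manifold) together with the Dirac operator construction of \cite{math/0809.3104}, and gives no further argument. Your additional remarks on naturality of the construction and the equivariance bookkeeping for the central extension are exactly the (implicit) checks being delegated to \cite{math/0809.3104}.
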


\medskip

As mentioned above, a co\hyp{}orthogonal structure on an infinite dimensional vector bundle assigns to each fibre a dense subspace with the structure of a Hilbert space\emhyp{}with, almost certainly, a strictly finer topology than the subspace topology.
The classification scheme introduced in \cite{math/0809.3104} measures how much it is possible to locally trivialise this structure, whether these Hilbert spaces fit together to form a bundle in their own right (and with what structure group), and whether the map from the Hilbert space to the larger space has any nice properties such as compactness.
This classification scheme works equally well for orthogonal structures as co\hyp{}orthogonal structures.

To illustrate this classification, the standard Riemannian structure on \(L M\) is a nuclear, locally trivial, orthogonal structure with structure group \(O_\res\) (the restricted \emph{orthogonal} group).
This means that there is a bundle of Hilbert spaces, say \(F \to L M\), with structure group \(O_\res\) and a vector bundle map \(T L M \to F\).
There are trivialisations of \(F\) and of \(T L M\) with respect to their structure groups such that \(T L M  \to F\) locally looks like the inclusion of \(L \R^n\) in a fixed Hilbert space (\(L^2 \R^n\)) and that this inclusion is a nuclear map.
In addition, this local trivialisation maps the inner product to the standard one on \(L^2 \R^n\).

The difference, therefore, is the local triviality of the inner product; this also relates to the fact that we can reduce the structure group of the Hilbert bundle of \(O_\res\) rather than \(\gl_\res\).
The point is that there is only one reasonable loop space on which \(L O_n\) acts isometrically and that is \(L^2 \R^n\).
Of course, we could always reduce the structure group of \(E\) to \(O_\res\) but we couldn't trivialise \(E\) with respect to \(O_\res\) at the same time as trivialising \(T L M\) with respect to its structure group,  \(L \gl_n\).
The purpose of introducing the larger structure group that we mention above is to enable us to trivialise \(E\) with respect to \(O_\res\) and \(T L M\) with respect to this larger group simultaneously.

\medskip

We shall give the construction in three stages.

The first stage, in section~\ref{sec:linear}, is the linear case.
As the co\hyp{}orthogonal structure that we wish to construct is intended to have some local triviality properties, we need to decide on a reference structure.
That is, we need to choose a fixed Hilbert subspace of \(L \R^n\).
This is not difficult: we take those loops which are analytic on an annulus of radius \((r^{-1},r)\) for some fixed \(r \in (1,\infty)\) and are square integrable on the boundary.
We shall denote this space by \(L_r^2 \R^n\).

We also need to investigate what group acts on this space.
It is easy to show that \(L \gl_n(\R)\) does not preserve any Hilbert subspace of \(L \R^n\) but we can reduce \(L \gl_n(\R)\) to \(L_\pol O_n\), the group of polynomial loops in \(O_n\), which does.
We also show that the action of \(L_\pol O_n\) on \(L_r^2 \R^n\) factors through the restricted general linear group.
There is an obvious isomorphism of \(L_r^2 \R^n\) with \(L^2 \R^n\) and we describe how this fits into the mixture.

The second stage, in section~\ref{sec:universal}, is the universal case.
Let \(G\) be a connected, compact Lie group.
Let \(\Omega G\) be the based loop group of \(G\).
This is a Lie group and its classifying space is again \(G\).
Now \(\Omega_\pol G\), the polynomial loop group of \(G\), is homotopy equivalent to \(\Omega G\) and so \(G\) is also the classifying space for \(\Omega_\pol G\).
This means that there is a universal \(\Omega_\pol G\)\enhyp{}bundle over \(G\) and, consequently, an \(L_\pol G\)\enhyp{}bundle and various associated vector bundles.
We shall give explicit constructions of these.
The difficult part of this section is proving local triviality: as our constructions are explicit we cannot simply appeal to the homotopy equivalence \(\Omega_\pol G \simeq \Omega G\) to deduce that they are the required universal bundles.

The reason for doing the universal case is simple.
Let \(E \to M\) be a finite dimensional vector bundle with structure group \(G\).
A connection on \(E\) defines a holonomy map, \(\Omega M \to G\).
This is an explicit classifying map for the bundle \(\Omega E \to \Omega M\).
That is, there is a \(\Omega \R^n\)\enhyp{}bundle over \(G\) which pulls\hyp{}back to \(\Omega E\) under the holonomy map.
Having constructed a sub\enhyp{}\(\Omega_\pol \R^n\)\enhyp{}bundle we can pull this back as well and obtain a subbundle \(\Omega_\pol E\) of \(\Omega E\) which, on fibres, looks like the inclusion of \(\Omega_\pol \R^n\) in \(\Omega \R^n\).

Thus by starting with a vector bundle with connection, we obtain an explicit classifying map and so the universal construction defines a specific construction on the loop bundle.
Compare this with the situation of orthogonal structures on finite dimensional vector bundles.
The inclusion \(O_n \subseteq \gl_n\) is a homotopy equivalence but choosing a universal orthogonal structure on the vector bundle over \(B \gl_n\) does not automatically define an orthogonal structure on every rank \(n\) vector bundle\emhyp{}it merely says that these exist.

The final stage, in section~\ref{sec:free}, is to adapt this to free loop spaces.
This is reasonably straightforward.
It uses the fact that the free loop space is a fibre bundle over the original manifold with fibre the based loop space.
The strategy is to apply the above construction to the fibres of this bundle and twist it suitably over the base.

\medskip

Finally, let us give a formula for the resulting inner product.
Let \(M\) be a finite dimensional Riemannian manifold.
Let \(\gamma \colon S^1 \to M\) be a smooth loop in \(M\).
The tangent space of \(L M\) at \(\gamma\) is naturally isomorphic to \(\Gamma(\gamma^* T M)\), the space of sections of \(\gamma^* T M \to S^1\).
Let \(D_\gamma \colon \Gamma(\gamma^* T M) \to \Gamma(\gamma^* T M)\) be the covariant differential operator defined by the Levi\hyp{}Civita connection on \(M\).
This is a skew\hyp{}adjoint operator with spectrum of the form \((i s_1, \dotsc i s_n) + i \Z^n\) for some real numbers \(s_j\).
Although \(D_\gamma\) is a differential operator when \(\Gamma(\gamma^* T M)\) is considered as a space of sections, when \(\Gamma(\gamma^* T M)\) is considered as the tangent space of \(T L M\), \(D_\gamma\) is more correctly viewed as a linear operator.

Let \(\cos D_\gamma\) be the densely\hyp{}defined operator on \(\Gamma(\gamma^* T M)\) defined using the power series for \(\cos\).
For \(\alpha, \beta \in \Gamma(\gamma^* T M)\) such that \(\cos D_\gamma \alpha\) and \(\cos D_\gamma \beta\) are defined, let
\[
  \ip{\alpha}{\beta} = \int_{S^1} \ip{(\cos D_\gamma \alpha)(t)}{(\cos D_\gamma \beta)(t)}_{T_{\gamma(t)} M} d t.
\]
The Hilbert subspace of \(T_\gamma L M = \Gamma(\gamma^* T M)\) is the subspace on which this formula makes sense.

\subsection{Acknowledgements and History}
\label{sec:introack}

The central idea of this paper\emhyp{}the construction of the polynomial loop bundle\emhyp{}places this paper as the latest in a loosely defined series: \cite{jm2}, \cite{rcas}, and \cite{as2}.
In the first of these, Morava attempted to construct an isomorphism for an almost complex manifold \(M\) between the tangent bundle of the loop space, \(L T M\), and a bundle of the form
 \(e_1^* T M \otimes_\C L \C\).
Here, \(e_1 \colon L M \to M\)
is the map which evaluates a loop at time \(1\) and every bundle is considered to be complex.
The argument broke down at one crucial step and the paper \cite{jm2} had to be withdrawn, \cite{jm9}\footnote{Although withdrawn, \cite{jm2} is still available from the website of the journal}.
The papers \cite{rcas} and \cite{as2} grew out of considering the question as to when that crucial step could be made to work.
This was found to be highly restrictive and implied, for example, that the tangent bundle of the based loop space of \(M\) was trivial.

One consequence which would follow from the existence of an isomorphism
 \(L T M \cong e_1^* T M \otimes_\C L \C\)
would be the existence of a subbundle modelled on the polynomial loop space.
In fact, for any class of loops there would be a bundle with the appropriate fibre constructed as
 \(e_1^* T M \otimes_\C L^\alpha \C\).
The construction in this paper was inspired by that of Morava.
The basic idea of using parallel transport to select a suitable subbundle of \(T L M\) comes directly from \cite{jm2}.
The bulk of this paper is concerned with proving the local triviality of that bundle (this is something that Morava did not have to consider as his bundle was\emhyp{}supposed to be\emhyp{}isomorphic to \(e_1^* T M\) and thus automatically locally trivial).

The author would like to thank Rafe Mazzeo, Ralph Cohen, and Eldar Straume for helpful conversations, to thank Gerd Laures for asking an interesting question, and to acknowledge the encouragement of Jack Morava.

\subsection{Notation}
\label{sec:notation}

In this paper we have two views of the circle.
One is as the domain of loops, the other as a Lie group.
We regard loops as periodic paths from \R and thus wish to identify the domain of loops with \(\R/\Z\).
When thinking of the circle as a Lie group, we think of it as \(U_1\) sitting inside \(M_1(\C) = \C\).
We shall use the notation \(S^1\) for \(\R/\Z\) and \T for \(U_1\).
We shall write \(t\) for the parameter in \(S^1\) and \(z\) in \T, with relationship \(z = e^{2 \pi i t}\).

For a finite dimensional smooth manifold \(M\) we define the loop space, \(L M\), and path space, \(P M\), as
\begin{align*}
 L M \coloneqq \Ci(S^1, M), \\
 P M \coloneqq \Ci(\R, M).
\end{align*}
As we are viewing the circle as a quotient of \(\R\) we have a natural inclusion \(L M \to P M\) as the subspace of periodic paths with period \(1\).
Within \(L M\) we have a copy of \(M\) as the constant loops.
Thus we can regard \(M\) as a subspace of \(L M\) and both as subspaces of \(P M\).

\section{The Linear Case}
\label{sec:linear}

In this section we shall consider the linear case.
We shall find suitable subspaces of \(L \R^n\) and of \(L \C^n\).
To extend this to the bundle case we shall need to examine various group actions on these.

\subsection{Polynomials and Polarisations}
\label{sec:polypol}

Let us start by introducing the main groups that we will be interested in.
The following definitions are standard.
The main reference for this section is \cite{apgs}.

\begin{defn}
Let \(n \ge 1\), let
  \(H \coloneqq L^2(S^1, \C^n)\),
 and define \(J_n \colon H \to H\) to be the operator
  \(J_n(v z^n) = i \sign(n) v z^n\).
 Define the \emph{restricted general linear group}, \(\gl_\res(H)\), as the subgroup of \(\gl(H)\) consisting of those operators \(T\) for which the commutator \(\lb T, J \rb\) is Hilbert\enhyp{}Schmidt.
 Define the \emph{restricted unitary group}, \(U_\res(H)\), as the intersection
  \(U(H) \cap \gl_\res(H)\).

 Let
  \(H_\R \coloneqq L^2(S^1, \R^n)\)
 and identify \(H\) with \(H_\R \otimes \C\) in the obvious way; this allows us to view \(\gl(H_\R)\) and \(O(H_\R)\) as subgroups of \(\gl(H)\).
 Define the \emph{restricted general linear group} of \(H_\R\), \(\gl_\res(H_\R)\), and the \emph{restricted orthogonal group} of \(H_\R\), \(O_\res(H_\R)\), as, respectively,
  \(\gl(H_\R) \cap \gl_\res(H)\)
 and
  \(O(H_\R) \cap \gl_\res(H)\).

 The group \(O_\res(H_\R)\) has two connected components, we denote the component of the identity by \(S O_\res(H_\R)\).
\end{defn}

Recall that an operator \(T \colon H_1 \to H_2\) between Hilbert spaces is \emph{Hilbert\enhyp{}Schmidt} if for some, and hence every, orthonormal basis \(\{e_i\}\) for \(H\) the series
 \(\sum_i \norm[T e_i]^2\)
is absolutely convergent.

The groups introduced above are not topologised as subgroups of \(\gl(H)\), rather the topology is strengthened to take into account the Hilbert\enhyp{}Schmidt norm on the commutators.
The details can be found in \cite{apgs}.
Although we have defined these groups using a specific operator on a specific separable Hilbert space, the groups so defined are\emhyp{}up to isomorphism\emhyp{}independent of these choices.
It is therefore customary to drop the ``\(H\)'' or ``\(H_\R\)'' from the notation when this does not lead to a loss of clarity.

These groups have a strong relationship to loop groups which we shall now introduce.
The smooth loop space of a compact Lie group is again a group and we shall refer to it as the \emph{smooth loop group}.
The definition of the polynomial loop group appears in~\cite[\S 3.5]{apgs} and we repeat it here.

\begin{defn}
 \label{def:polgrp}
 Let \(G\) be a compact Lie group.
 Fix an embedding of \(G\) as a subgroup of \(U_n\) for some \(n\).
 This exhibits \(G\) as a submanifold of \(M_n(\C)\).
 The \emph{polynomial loop group} of \(G\), \(L_\pol G\), is defined as the space of those loops in \(G\) which, when expanded as a Fourier series in \(M_n(\C)\), are finite Laurent polynomials.
 The group of based loops, \(\Omega_\pol G\), is the subgroup of \(L_\pol G\) of loops \(\gamma\) with \(\gamma(0) = 1_G\).
\end{defn}

\begin{remark}
 The following comments appear in~\cite[\S 3.5]{apgs}:

 \begin{enumerate}
 \item
  The choice of the embedding of \(G\) in \(U_n\) is immaterial.

 \item
  The space \(L_\pol G\) is the union of the subspaces \(L_{\pol,N} G\) consisting of those loops with Fourier series of the form:
  \[
   \sum_{k = -N}^N \gamma_k z^k.
  \]
  These spaces are naturally compact.
  The topology on \(L_\pol G\) is the direct limit topology of this union.

 \item
  The free polynomial loop group is the semi\hyp{}direct product of the based polynomial loop group and the constant loops.

 \item
  The group \(L_\pol G\) does not have an associated Lie algebra, although the Lie algebra \(L_\pol \mf{g}\) is often linked to it.

 \item
  If \(G\) is semi\hyp{}simple then \(L_\pol G\) is dense in \(L G\).

 \item
  In the case of the circle, \(\Omega_\pol S^1 = \Z\)
  and so
   \(L_\pol S^1 = S^1 \times \Z\).
 \end{enumerate}
\end{remark}

The following is \cite[proposition~8.6.6]{apgs}:

\begin{proposition}
 If \(G\) is semi\hyp{}simple then the inclusion
  \(\Omega_\pol G \to \Omega G\)
 is a homotopy equivalence. \noproof
\end{proposition}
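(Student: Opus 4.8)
The plan is to compare \(\Omega_\pol G\) and \(\Omega G\) through the Morse theory of the energy functional, using the fact that geodesic loops are automatically polynomial. Both spaces have the homotopy type of CW complexes --- \(\Omega G\) because it is homotopy equivalent to the based loop space of \(G\) in the \(H^1\)\enhyp{}topology, which is a metrisable Hilbert manifold, and \(\Omega_\pol G\) because it is a direct limit of the compact polyhedra \(\Omega_{\pol,N} G\) along cofibrations --- so by Whitehead's theorem it is enough to show that the inclusion induces isomorphisms on all homotopy groups.

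Fix a bi\hyp{}invariant Riemannian metric on \(G\) and let \(E(\gamma) = \frac{1}{2}\int_{S^1}|\gamma^{-1}\dot\gamma|^2\,dt\) be the energy functional on the Hilbert manifold \(\Omega^1 G\) of based \(H^1\)\enhyp{}loops, which is homotopy equivalent to \(\Omega G\). This functional is bounded below, satisfies the Palais\enhyp{}Smale condition, and --- by the classical Morse\enhyp{}Bott analysis of closed geodesics on a compact Lie group, due to Bott --- is Morse\enhyp{}Bott: its critical set is a disjoint union of compact critical submanifolds, one for each Weyl orbit in the integral lattice of a maximal torus, the submanifold attached to a lattice vector \(X\) being the set of loops \(t \mapsto g\exp(tX)g^{-1}\), \(g \in G\); the component of the zero vector is the point of constant loops. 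The essential point is that each such loop lies in \(\Omega_\pol G\): in a faithful unitary representation \(\rho\) the matrix \(\rho_*(X)\) is skew\enhyp{}Hermitian with eigenvalues in \(2\pi i\Z\) (since \(\exp X = e\)), so every entry of \(\rho(\exp(tX))\) is a finite Fourier series, i.e.\ a Laurent polynomial in \(z\), and conjugation by the constant matrix \(\rho(g)\) preserves this; hence each critical submanifold is contained in some \(\Omega_{\pol,N} G\). Moreover, Bott's computation of the indices shows that the index of the critical submanifold attached to \(X\) grows without bound with \(|X|\), so the sublevel inclusion \(\{E \le c\} \hookrightarrow \Omega G\) is an isomorphism on \(\pi_j\) for \(j < \nu(c)\), where \(\nu(c) \to \infty\) as \(c \to \infty\).

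The comparison then proceeds by induction over the critical values. By the Morse\enhyp{}Bott lemma together with Palais\enhyp{}Smale, as \(c\) crosses a critical value \(c_0\) the sublevel set \(\{E \le c_0 + \varepsilon\}\) is obtained from \(\{E \le c_0 - \varepsilon\}\) by attaching the disc bundles of the negative normal bundles of the critical submanifolds at level \(c_0\). One wants the same handle decomposition for \(\Omega_\pol G \cap \{E \le c\}\); then, since each \(\Omega_{\pol,N} G\) is compact and hence lies in some sublevel set, so that the colimit over \(c\) of the intersections \(\Omega_\pol G \cap \{E \le c\}\) is \(\Omega_\pol G\), the inclusion \(\Omega_\pol G \hookrightarrow \Omega G\) becomes a weak, and therefore genuine, homotopy equivalence in the colimit.

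I expect the main obstacle to be exactly this transfer of the handle decomposition to the polynomial loops: the negative gradient flow of \(E\) does not preserve polynomiality, so one cannot simply intersect the standard deformation retractions with \(\Omega_\pol G\). This is where semi\hyp{}simplicity of \(G\) is used: by comment~(5) of the Remark above, \(L_\pol G\) is dense in \(LG\), and with care this can be promoted to a parametrised approximation --- any map of a finite polyhedron into \(\{E \le c\}\) can, after flowing towards the critical submanifolds, be pushed into some \(\Omega_{\pol,N} G\) --- which together with \(\nu(c) \to \infty\) shows that \(\Omega_{\pol,N} G \hookrightarrow \Omega G\) is \(k(N)\)\enhyp{}connected with \(k(N) \to \infty\), yielding the weak equivalence. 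An alternative, closer to the treatment in \cite{apgs}, is to identify \(\Omega_\pol G\) with the affine Grassmannian of the complexification of \(G\) and to match its Birkhoff\enhyp{}Schubert stratification with the Morse\enhyp{}Bott stratification of \(\Omega G\) stratum by stratum.
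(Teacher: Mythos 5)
The paper does not actually prove this statement: it is quoted verbatim from \cite{apgs} (proposition~8.6.6) and marked ``\noproof'', so the relevant benchmark is the proof in that reference, which identifies \(\Omega_\pol G\) with the affine Grassmannian and matches its Bruhat--Birkhoff cells with the Morse--Bott/energy stratification of \(\Omega G\). Your outline lives in the right circle of ideas, and the observation you make carefully --- that every critical manifold of the energy functional consists of loops \(t \mapsto \exp(t\,\Ad_g X)\) with \(\exp X = e\), hence of polynomial loops lying in a fixed \(\Omega_{\pol,N} G\) --- is correct and is genuinely the reason the comparison can work at all. The CW-type and Whitehead reductions are also fine.

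The genuine gap is exactly the step you flag and then dispose of with ``with care'': passing from density of \(\Omega_\pol G\) in \(\Omega G\) to the claim that \(\Omega_{\pol,N} G \hookrightarrow \Omega G\) is \(k(N)\)\enhyp{}connected with \(k(N)\to\infty\). Smooth (\(C^\infty\)) density gives pointwise approximation of a single loop; it gives no control over families (maps of finite polyhedra), no compatibility with the energy sublevels, and no way to deform a family \emph{within} \(\Omega_\pol G\) once it has been pushed there, precisely because the negative gradient flow does not preserve polynomiality and the intersections \(\Omega_\pol G \cap \{E \le c\}\) are not the handles produced by the Morse--Bott lemma. Establishing that parametrised approximation statement is essentially the whole content of the theorem, so as written the argument is circular at its centre. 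The way this is actually carried out in the literature is the route you relegate to your final sentence: realise \(\Omega_\pol G\) as the algebraic (affine) Grassmannian with its Bruhat/Schubert filtration, realise the Morse--Bott data of \(E\) as the corresponding Birkhoff stratification of \(\Omega G\), and check stratum by stratum that the inclusion is a cellwise equivalence (Pressley--Segal \S\S7.2, 8.6; see also Mitchell's Schubert-variety filtration of \(\Omega SU_n\)). That should be the main argument rather than an afterthought; alternatively one must supply an honest proof of the parametrised approximation, e.g.\ a deformation of \(\{E \le c\}\) into \(\Omega_{\pol,N} G\) built from the cell structure, which again amounts to the same stratification comparison. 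Semi\hyp{}simplicity, incidentally, enters through the index growth \(\nu(c) \to \infty\) (it fails for torus factors), not merely through density.
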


Since
 \(L_\pol G \cong \Omega_\pol G \times G\)
and
 \(L G \cong \Omega G \times G\)
as spaces (although not generally as groups), this holds for the unbased loops as well.

In passing, let us observe that the topology on \(L_\pol G\) makes questions of continuity very easy to determine.
Suppose that a representation of \(L_\pol G\) factors through a representation of the polynomial algebra \(L_\pol M_n(\C)\) for some \(n\).
It is therefore automatically continuous as \(L_\pol M_n(\C)\) is topologised as the direct limit of its finite dimensional subspaces and so any linear map out of \(L_\pol M_n(\C)\) into a topological vector space is continuous.

The following result is the starting point of \cite{apgs}.

\begin{proposition}
 The natural action of \(L \gl_n(\C)\) on \(H = L^2(S^1, \C^n)\) factors through a homomorphism
  \(L \gl_n(\C) \to \gl_\res(H)\). \noproof
\end{proposition}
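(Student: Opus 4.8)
The plan is to check directly that pointwise multiplication by a smooth loop gives an operator in \(\gl_\res(H)\), and that the resulting map is a continuous homomorphism. For \(\gamma \in L\gl_n(\C)\) write \(M_\gamma\) for the operator \((M_\gamma f)(t) = \gamma(t) f(t)\) on \(H\). Since \(\gamma\) is smooth on the compact circle, both \(\gamma\) and \(\gamma^{-1}\) are bounded, so \(M_\gamma\) is bounded with bounded inverse \(M_{\gamma^{-1}}\) and \(\|M_\gamma\| = \|\gamma\|_\infty\); clearly \(M_{\gamma_1\gamma_2} = M_{\gamma_1}M_{\gamma_2}\) and \(M_{\mathbf 1} = \mathrm{id}\). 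So \(\gamma \mapsto M_\gamma\) is already a homomorphism \(L\gl_n(\C) \to \gl(H)\), and the work is to see that \([M_\gamma, J_n]\) is Hilbert--Schmidt.

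First I would treat the monomials. In the orthonormal basis \(\{e_j z^k : 1 \le j \le n,\ k \in \Z\}\) one has \(J_n(e_j z^k) = i\sign(k)\, e_j z^k\) and \(M_{z^m}(e_j z^k) = e_j z^{k+m}\), so
\[
  [M_{z^m}, J_n](e_j z^k) = i\big(\sign(k) - \sign(k+m)\big)\, e_j z^{k+m}.
\]
The coefficient has modulus at most \(2\) and vanishes for all but at most \(|m|+1\) values of \(k\), so \([M_{z^m}, J_n]\) has finite rank with \(\|[M_{z^m}, J_n]\|_{\mathrm{HS}}^2 \le 4n(|m|+1)\). Multiplication by a constant matrix \(a \in M_n(\C)\) acts in the \(\C^n\)-factor and so commutes with \(J_n\), whence for a monomial \(az^m\) we get \([M_{az^m}, J_n] = M_a [M_{z^m}, J_n]\) and therefore
\[
  \|[M_{az^m}, J_n]\|_{\mathrm{HS}} \le \|a\|\, \|[M_{z^m}, J_n]\|_{\mathrm{HS}} \le 2\|a\|\sqrt{n(|m|+1)}.
\]

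Next I would assemble the Fourier expansion \(\gamma = \sum_m \gamma_m z^m\), \(\gamma_m \in M_n(\C)\). Smoothness of \(\gamma\) makes \(\|\gamma_m\|\) decay faster than any power of \(|m|\); in particular \(\sum_m \|\gamma_m\| < \infty\), so \(M_\gamma = \sum_m M_{\gamma_m z^m}\) converges in operator norm, and moreover
\[
  \sum_m \big\|[M_{\gamma_m z^m}, J_n]\big\|_{\mathrm{HS}} \le 2\sqrt{n} \sum_m \|\gamma_m\| \sqrt{|m|+1} < \infty.
\]
Since \(\mathrm{HS}(H)\) is a Banach space under \(\|\cdot\|_{\mathrm{HS}}\), the series \(\sum_m [M_{\gamma_m z^m}, J_n]\) converges there; as the Hilbert--Schmidt norm dominates the operator norm and \(M_\gamma = \sum_m M_{\gamma_m z^m}\) already in operator norm, the sum equals \([M_\gamma, J_n]\). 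Hence \([M_\gamma, J_n]\) is Hilbert--Schmidt and \(M_\gamma \in \gl_\res(H)\). Continuity of \(\gamma \mapsto M_\gamma\) for the topology on \(\gl_\res(H)\) follows from the same two estimates: the map into \(\gl(H)\) satisfies \(\|M_\gamma - M_{\gamma'}\| = \|\gamma - \gamma'\|_\infty\), and the Hilbert--Schmidt norm of the commutator is controlled by the last displayed bound, which is dominated by a Sobolev norm of \(\gamma\) and hence continuous for the \(C^\infty\)-topology on \(L\gl_n(\C)\).

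The only point needing care is the finite-rank estimate for \([M_{z^m}, J_n]\) together with the observation that the rapid decay of the Fourier coefficients of a smooth loop beats the \(O(\sqrt{|m|})\) growth coming from that estimate; everything else is bookkeeping, and the convention chosen for \(\sign(0)\) is irrelevant since it alters the rank of \([M_{z^m}, J_n]\) by at most \(n\). I expect no real obstacle: this is essentially the argument of \cite{apgs} adapted to matrix-valued smooth loops.
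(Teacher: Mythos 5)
Your argument is correct: the finite\hyp{}rank estimate for \(\lb M_{z^m}, J_n \rb\), the rapid decay of Fourier coefficients of a smooth loop, and convergence in the Hilbert\enhyp{}Schmidt norm are exactly the points on which the standard proof turns. The paper itself states this proposition without proof, citing \cite{apgs}, and your proof is essentially the argument given there, so there is nothing to compare beyond noting that your reconstruction is faithful to that source.
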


We therefore see that if \(G\) acts on a finite dimensional real or complex vector space \(V\) then the smooth and polynomial loop groups act on
 \(H \coloneqq L^2(S^1, V)\)
via a homomorphism into \(\gl_\res(H)\).

\subsection{Linear Co\hyp{}Orthogonal Structures}
\label{sec:linco}

Our main theorem states that if \(M\) is a finite dimensional Riemannian manifold then \(L M\) admits a nuclear, locally equivalent, co\hyp{}Riemannian structure with structure group \(\gl_\res\).
This assigns to each tangent space of \(L M\) a linear map from a Hilbert space which is injective and has dense image.
The term ``locally equivalent'' means that these look locally like the inclusion of some standard Hilbert space in \(L \R^n\).
In this section we shall define this standard space.

It is also important to consider group actions.
The linear interpretation of the phrase ``with structure group \(\gl_\res\)'' is that there is some subgroup of \(L \gl_n\) acting on \(L \R^n\) which also acts on the Hilbert subspace and that this action is via \(\gl_\res\).
That we can only say ``locally equivalent'' and not ``locally trivial'' means that this action is not by isometries.

We shall define a \(1\)\enhyp{}parameter family of Hilbert subspaces indexed by \(r \in (1,\infty)\).
Contrast this with the opposite structure, that of finding a Hilbert completion of \(L \R^n\).
There is an obvious family of such completions, \(L^{k,2} \R^n\),  indexed by \(k \in \N_0\).
In this case the completion corresponding to \(k = 0\) has a mathematical advantage unshared by the others: the action of the loop group \(L O_n\) is by isometries.

We start by defining our space of interest and a particularly important operator.

\begin{defn}
\label{def:cosds}
 Let \(V\) be a complex vector space of dimension \(n\) with an inner product.
 Let \(r \in (1, \infty)\).
 Let \(L_\pol V\) be the space of \(V\)\enhyp{}valued polynomial loops, and \(L^2_r V\) the space of loops in \(V\) which extend holomorphically over an annulus of radii \((r^{-1},r)\) and are square integrable on the boundary.

 Let
  \(s \coloneqq (\{v_1, \dotsc, v_n\}, \{s_1, \dotsc, s_n\})\)
 where \(\{v_1, \dotsc, v_n\}\)
 is a basis for \(V\) and
  \(s_1, \dotsc, s_n \in \R\).
 Let
  \(D_s \colon L_\pol V \to L_\pol V\)
 be the operator defined by
  \(D_s v_j z^p = i(p+s_j) v_j z^p\).
 Define
 \[
  \cos_r D_s \coloneqq \sum_{j \ge 0} \frac{(-1)^j}{(2j)!} (\log (r) D_s)^{2 j}.
 \]
\end{defn}

We state our main result on these.

\begin{theorem}
 \label{th:ldeiso}
 The group \(L_\pol U(V)\) acts continuously on \(L_r^2 V\) and this action factors through \(\gl_\res\).

 The map \(\cos_r D_s\) extends to an isomorphism \(L_r^2 V \to L^2 V\) which intertwines the actions of \(\gl_\res\) on each space.
\end{theorem}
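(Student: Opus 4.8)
The plan is to carry out everything in the Fourier basis, where every operator in sight is either diagonal or a finite sum of shifts. Fix an orthonormal basis \(\{v_1,\dots,v_n\}\) of \(V\). Then \(\{v_j z^p : 1\le j\le n,\ p\in\Z\}\) is an orthogonal basis of \(L^2 V\), and I claim it is also an orthogonal basis of \(L^2_r V\): a Laurent series \(\gamma=\sum_{j,p}c_{j,p}v_j z^p\) extends holomorphically over the annulus of radii \((r^{-1},r)\) with square-integrable boundary values precisely when \(\sum_{j,p}|c_{j,p}|^2\bigl(r^{2p}+r^{-2p}\bigr)<\infty\), since the \(L^2\)-norm of \(\gamma\) restricted to \(\{|z|=\rho\}\) is \(\sum_{j,p}|c_{j,p}|^2\rho^{2p}\) and one lets \(\rho\to r^-\) and \(\rho\to r^{-1+}\) (monotone convergence one way, the elementary bound \(\rho^{2p}\le r^{2p}+r^{-2p}\) the other). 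So \(L^2_r V\) is a Hilbert space, isometric to a weighted \(\ell^2\), and it carries the polarisation operator \(J\), which is the Fourier multiplier \(v_j z^p\mapsto i\,\sign(p+s_j)\,v_j z^p\); the value at \(p+s_j=0\) (when \(s_j\in\Z\)) is a finite-rank ambiguity that will play no role below.

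First I would evaluate \(\cos_r D_s\) on monomials. Since \(D_s(v_j z^p)=i(p+s_j)v_j z^p\), the defining series telescopes to \(\cos_r D_s(v_j z^p)=\cosh\!\bigl((p+s_j)\log r\bigr)v_j z^p\), convergent because \(\cosh\) is entire; in particular \(\cos_r D_s\) is a positive diagonal Fourier multiplier. Because \(\cosh^2\!\bigl((p+s_j)\log r\bigr)=\tfrac14\bigl(r^{p+s_j}+r^{-(p+s_j)}\bigr)^2\asymp r^{2|p|}\), uniformly over the finitely many \(j\), and the weight \(r^{2p}+r^{-2p}\) defining \(\norm[\cdot]_r\) is also \(\asymp r^{2|p|}\), the operator \(\cos_r D_s\) sends the orthogonal basis of \(L^2_r V\) to a family uniformly comparable to an orthogonal basis of \(L^2 V\). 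Hence it extends to a bounded linear bijection \(L^2_r V\to L^2 V\) with bounded inverse, which is the isomorphism asserted. (It need not be an isometry for the boundary inner product, but the theorem only claims an isomorphism.)

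Next, the action. The natural action of \(\Phi\in L_\pol U(V)\) on \(L^2_r V\) is by the multiplication operator \(M_\Phi\colon\gamma\mapsto\Phi\gamma\). If \(\Phi=\sum_{|k|\le N}\Phi_k z^k\) then \(\sup_{|z|=\rho}\norm[\Phi(z)]_{\mathrm{op}}<\infty\) for \(\rho=r,r^{-1}\), so \(M_\Phi\) is bounded on \(L^2_r V\); and on \(S^1\) one has \(\Phi^{-1}(z)=\Phi(z)^*=\sum_k\Phi_k^* z^{-k}\), again a Laurent polynomial, so \(\Phi^{-1}\in L_\pol U(V)\) and \(M_{\Phi^{-1}}=M_\Phi^{-1}\) (two Laurent polynomials agreeing on \(S^1\) agree identically); thus \(\Phi\mapsto M_\Phi\) is a homomorphism into the invertible operators of \(L^2_r V\). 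Writing \(M_\Phi=\sum_{|k|\le N}M_{\Phi_k}\circ S_k\) with \(S_k\) the shift \(v_j z^p\mapsto v_j z^{p+k}\), one sees \([M_\Phi,J]\) has finite rank: each constant-matrix multiplier \(M_{\Phi_k}\) commutes with \(J\) on the span of \(\{v_j z^p:|p|>\max_j|s_j|\}\), where \(\sign(p+s_j)=\sign p\) for all \(j\), and \([S_k,J]\) is supported on the at most \(|k|n\) modes where \(\sign(p+s_j)\neq\sign(p+k+s_j)\). Finite rank implies Hilbert--Schmidt, so \(M_\Phi\in\gl_\res(L^2_r V)\). Continuity of \(L_\pol U(V)\to\gl_\res(L^2_r V)\) is checked on each compact piece \(L_{\pol,N}U(V)\), which lies in a finite-dimensional space of \(M_n(\C)\)-valued Laurent polynomials: there \(\Phi\mapsto M_\Phi\) varies continuously in operator norm and the commutators \([M_\Phi,J]\) stay of rank at most \(Nn^2\) with continuously varying Hilbert--Schmidt norm, so the map is continuous into \(\gl_\res\); the direct-limit topology then promotes this to continuity on all of \(L_\pol U(V)\).

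Finally, \(\cos_r D_s\) intertwines the \(\gl_\res\)-actions because it commutes with \(J\) (both are diagonal in the Fourier basis): conjugation by \(\cos_r D_s\) sends \([A,J]\) to \(\cos_r D_s\,[A,J]\,(\cos_r D_s)^{-1}\), and since \(\cos_r D_s\) is bounded with bounded inverse and the Hilbert--Schmidt operators form a two-sided ideal, this conjugation is an isomorphism of topological groups \(\gl_\res(L^2_r V)\to\gl_\res(L^2 V)\) compatible with the actions on \(L^2_r V\) and \(L^2 V\) (and it exhibits the transported action of \(L_\pol U(V)\) on \(L^2 V\) as factoring through \(\gl_\res\), in harmony with the earlier proposition on \(L\gl_n(\C)\)). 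I expect the only genuinely analytic point to be the identification of \(L^2_r V\) with the weighted \(\ell^2\) space -- the Hardy-type characterisation of functions holomorphic on an annulus with \(L^2\) boundary values, and completeness of the result -- together with pinning down the correct polarisation. After that the proof reduces to one observation: \(\cos_r D_s\), \(J\) and the constant-matrix multipliers are all diagonal, so the only off-diagonal content of \(M_\Phi\) is a finite sum of shifts, and that is exactly what forces the commutators to be finite rank.
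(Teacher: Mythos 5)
Your proof is correct and takes essentially the same route as the paper: identify everything with weighted sequence spaces via Fourier coefficients, observe that \(\cos_r D_s\) is diagonal with entries \(\cosh\big((p+s_j)\log r\big)\asymp r^{\abs{p}}\), matching the weight defining \(L^2_r V\), and transport the restricted structure by conjugation with this diagonal operator, which commutes with \(J\). Your explicit finite\hyp{}rank computation of \(\lb M_\Phi, J\rb\) simply spells out the ``factors through \(\gl_\res\)'' step that the paper handles more tersely via the generators \(\End(V)\) and the shift.
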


\begin{proof}
 Taking Fourier coefficients allows us to identify all of these spaces with certain spaces of \Z{}\hyp{}indexed sequences in \(V\).
 Smooth loops, \(L V\), corresponds to the space of rapidly decreasing sequences; \(L^2 V\) to the space of square integrable sequences; polynomial loops, \(L_\pol V\), corresponds to the space of finite sequences; and \(L^2_r V\) corresponds to the space of sequences \((a_p)\) for which the sequence \((r^{\abs{p}} a_p)\) is square integrable.

 To show that the polynomial loop group, \(L_\pol U(V)\), acts continuously on all of these spaces it is sufficient to show that the algebra \(L_\pol \End(V)\) acts.
 This is generated as a topological algebra by the subalgebra \(\End(V)\) and the operator which multiplies by \(z\).
 The algebra \(\End(V)\) obviously acts continuously on all of the spaces.
 The operation corresponding to multiplication by \(z\) is the shift operator on the sequence spaces which is also obviously continuous.

 For \(j, p \in \Z\) with \(1 \le j \le n\), let \(e_{p,j}\) be the sequence with \(v_j\) in the \(p\)th slot and zero elsewhere.
 These form a topological basis for all of the above spaces.
 On this basis, \(D_s\) is
 \[
  e_{p,j} \mapsto i(p + s_j) e_{p,j}.
 \]
 Hence \(\cos_r D_s\) is
 \[
  e_{p,j} \mapsto \cos \big(i (p + s_j) \log (r)\big) e_{p,j} = \cosh \big((p + s_j)\log(r)\big) e_{p,j}.
 \]

 Elementary analysis shows that for all \(x,t \in \R\)
 \[
  \cosh(t \log(r)) \ge \frac{\cosh\big( (x + t) \log(r)\big)}{r^{\abs{x}}} \ge \frac12 \min\{r^t, r^{-t}\}
 \]
 We therefore see that \((a_p)\) is a sequence such that \((r^{\abs{p}} a_p)\) is square integrable if and only if \(\cos_r D_s(a_p)\) is a sequence which is square integrable.
 Hence \(\cos_r D_s\) defines an isomorphism \(L_r^2 V \to L^2 V\).

 Thus \(\cos_r D_s\) defines a group isomorphism \(\gl(L_r^2 V) \cong \gl(L^2 V)\).
 We want to  show that this restricts to a group isomorphism \(\gl_\res(L_r^2 V) \cong \gl_\res(L^2 V)\).
 For the operators \(J_V\) and \(J_{V,r}\) defining \(\gl_\res(L^2 V)\) and \(\gl_\res(L^2_r V)\) respectively we can take the operators characterised by the fact that each takes \(v_j z^p\) to \(i \sign(p) v_j z^p\)\emhyp{}this makes sense for both \(L^2 V\) and \(L^2_r V\) as the set \(\{v_j z^p\}\) is a topological basis for both spaces (albeit only orthonormal for \(L^2 V\)).

As \(\cos_r D_s\) takes \(v_j z^p\) to a multiple of itself it is immediate that
\[
  (\cos_r D_s)^{-1} J_V (\cos_r D_s) = J_{V,r}.
\]
Thus for \(A \in \gl(L^2 V)\)
\[
  \lb (\cos_r D_s)^{-1} A (\cos_r D_s), J_{V,r} \rb = (\cos_r D_s)^{-1} \lb A, J_V \rb (\cos_r D_s).
\]
Hence \(\lb A, J_V \rb\) is Hilbert\enhyp{}Schmidt if and only if \(\lb (\cos_r D_s)^{-1} A (\cos_r D_s), J_{V,r} \rb\) is Hilbert\enhyp{}Schmidt.

We did not earlier describe the topology on \(\gl_\res\); in brief, it is given by combining the norm topology together with the Hilbert\enhyp{}Schmidt topology on the commutator \(\lb A, J \rb\).
From this and the above we see that the group isomorphism \(\gl_\res(L^2 V) \to \gl_\res(L^2_r V)\) is continuous.
\end{proof}

All of the above has a real counterpart given by taking the underlying real vector spaces.

\medskip

Although \(L^2_r V\) has an obvious inner product, we wish to allow for some variation in this choice.
For each choice of \(s\), as described in definition~\ref{def:cosds}, we define an inner product on \(L^2_r V\) by insisting that \(\cos_r D_s \colon L^2_r V \to L^2 V\) be an \emph{isometric} isomorphism.

A choice of \(s\), as in definition~\ref{def:cosds}, isometrically identifies the spaces \(L^2_r V\) with \(L^2(V)\) and thus defines a family of representations \(L_\pol U(V) \to \gl_\res(L^2 V)\).
We can extend this family to \(r = 1\) by including the standard action on \(L_\pol U(V)\) on \(L^2 V\).
It is straightforward to show that this family of representations is continuous in \(r\).
As an immediate consequence we have an explicit homotopy between the representation at some fixed \(r_0 \in (1, \infty)\) and the standard representation corresponding to \(r = 1\).

The main difference between the representations for \(r \in (1, \infty)\) and \(r = 1\) is that the representation of \(L_\pol U(V)\) is unitary for \(r = 1\) but not for any other \(r\).
If we are only concerned with its action on the Hilbert space this is not an issue since we are mainly interested in the induced \(\gl_\res\)\enhyp{}action which has a subgroup that acts unitarily.
Moreover, this subgroup is homotopic to \(\gl_\res\).
However, if we are also interested in the action on \(L V\) then we must look elsewhere as \(\gl_\res\) does not act on \(L V\).

\begin{defn}
Let \(c_\Z(\End(V))\) be the Banach space of \(\End(V)\)\enhyp{}valued, \Z\enhyp{}indexed sequences, \((A_k)\), with the property that \(\lim_{k \to \infty} A_k\) and \(\lim_{k \to -\infty} A_k\) exist and are equal.
Let \(E L \End(V)\) be the algebra of rapidly decreasing,  \Z\enhyp{}indexed sequences in \(c_\Z(\End(V))\); suitably topologised.
A sequence \((A_k)\) represents the operator
\[
  \sum A_k z^k
\]
and the product is defined accordingly.
Define the \emph{expanded loop group}, \(E L \gl(V)\), to be the group of units in this algebra with the standard topology making the two maps \(E L \gl(V) \to E L \End(V)\), \(g \mapsto g\) and \(g \mapsto g^{-1}\), continuous.
\end{defn}

This group has a variety of good properties, for example it is a split extension of \(L \gl(V)\).
The main one\emhyp{}for our purposes\emhyp{}is that for each \(r \in \lb 1,\infty \rp\) and \(s\) as in definition~\ref{def:cosds} there is a subgroup isomorphic to \(L_\pol U(V)\) which acts unitarily on \(L_r^2 V\).
Moreover, exactly as for \(\gl_\res\), the representations \(L_\pol U(V) \to E L \gl(V)\) so defined are all homotopic.

This group does not play a major r\^ole in this paper.
Its purpose is to show that it is possible to replace the ``locally equivalent'' co\hyp{}orthogonal structure with a ``locally trivial'' one by changing the structure group.
Therefore we shall not study it further.

\section{The Universal Case}
\label{sec:universal}

In this section we consider the universal case.
The classifying space of \(\Omega G\) is (homotopy equivalent to) \(G\).
Since \(\Omega G \simeq \Omega_\pol G\), this is also the classifying space of \(\Omega_\pol G\).
There is therefore a universal \(\Omega_\pol G\)\enhyp{}principal bundle over \(G\) such that the inclusion \(\Omega_\pol G \to \Omega G\) embeds this \(\Omega_\pol G\)\enhyp{}principal bundle in the standard \(\Omega G\)\enhyp{}principal bundle.

Using the \(\Omega_\pol G\)\enhyp{}bundle and \(\Omega G\)\enhyp{}bundle over \(G\) we obtain an \(L_\pol G\)\enhyp{}bundle and an \(L G\)\enhyp{}bundle in the obvious way.
Given an action of \(G\) on a vector space \(V\) we can therefore define vector bundles over \(G\) with fibres \(L V\), \(L_\pol V\), \(L_r^2 V\), and \(L^2 V\).

In this section we shall explicitly construct the \(L_\pol G\)\enhyp{}bundles over \(G\) for \(G\) each of \(U_n\), \(S U_n\), and \(S O_n\) and the associated vector bundles.
We start with some general results on polynomial loops.

\subsection{Polynomial Loops}
\label{sec:polloops}

In this part we consider the group of polynomial loops in a compact, connected Lie group.
This was studied extensively in~\cite{apgs} with some further work appearing in~\cite{gs2} in the case of \(U_n\).

Although the definition given in section~\ref{sec:polypol} of \(L_\pol G\) does not depend on the embedding of \(G\) in \(U_n\), it is useful to have such an embedding to investigate the structure of \(L_\pol G\) in a little more detail.
We consider loops of the form \(t \to \exp(t \xi)\) for suitable \(\xi \in \mf{g}\).
The main result is the following:

\begin{proposition}
 \label{prop:liepol}
 Let \(G\) be a compact, connected Lie group, \(\mf{g}\) its Lie algebra.
 For \(\xi \in \mf{g}\), let
  \(\eta_\xi \colon \R \to G\)
 denote the path
  \(\eta_\xi (t) = \exp(t \xi)\).

 Let
  \(\xi_1, \xi_2 \in \mf{g}\)
 be such that
  \(\exp(\xi_1) = \exp(\xi_2)\).
 Then
  \(\eta_{-\xi_1} \eta_{\xi_2}\)
 is a polynomial loop in \(G\).
\end{proposition}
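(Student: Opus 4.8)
The plan is to embed \(G\) as a subgroup of \(U_n\) (permissible since, by the first comment following Definition~\ref{def:polgrp}, the choice of embedding is immaterial), write the candidate loop
\(f(t) \coloneqq \eta_{-\xi_1}(t)\,\eta_{\xi_2}(t) = \exp(-t\xi_1)\exp(t\xi_2)\)
explicitly as a finite exponential sum in \(M_n(\C)\), and then use linear independence of characters to show that every frequency occurring in \(f\) is an integer multiple of \(2\pi\). That is precisely the assertion that \(f\) is a finite Laurent polynomial in \(z = e^{2\pi i t}\), hence lies in \(L_\pol G\).

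The first thing I would check is that \(f\) genuinely descends to a loop, i.e.\ that \(f(t+1) = f(t)\) for all \(t \in \R\) and not merely that \(f(1) = f(0) = 1\). Writing \(g \coloneqq \exp(\xi_1) = \exp(\xi_2)\) and splitting off the integer part of \(t+1\), one gets \(f(t+1) = g^{-1}\exp(-t\xi_1)\,g\,\exp(t\xi_2)\); since \(g = \exp(\xi_1)\) commutes with \(\exp(-t\xi_1)\) (both lie in the abelian one-parameter subgroup generated by \(\xi_1\)), the two occurrences of \(g\) cancel and this collapses back to \(\exp(-t\xi_1)\exp(t\xi_2) = f(t)\). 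This is the one and only place where the hypothesis \(\exp(\xi_1) = \exp(\xi_2)\) is used.

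Next, since \(\xi_1, \xi_2 \in \mf{g} \subseteq \mf{u}_n\) are skew-Hermitian matrices, each is unitarily diagonalisable with purely imaginary spectrum, so \(\exp(-t\xi_1) = \sum_a e^{-i\beta_a t} P_a\) and \(\exp(t\xi_2) = \sum_b e^{i\gamma_b t} Q_b\) with \(\beta_a, \gamma_b \in \R\) and \(P_a, Q_b\) the respective spectral projections. Multiplying these out and collecting terms yields \(f(t) = \sum_m c_m e^{i\omega_m t}\) for finitely many distinct real frequencies \(\omega_m\) and nonzero matrices \(c_m \in M_n(\C)\). Feeding in the periodicity from the previous step gives \(\sum_m c_m(e^{i\omega_m} - 1)e^{i\omega_m t} = 0\) for all \(t\), and since the characters \(t \mapsto e^{i\omega t}\) of \(\R\) for distinct \(\omega\) are linearly independent (applied entrywise, this is Artin's independence of characters), each \(c_m(e^{i\omega_m} - 1)\) vanishes; as \(c_m \neq 0\) we conclude \(e^{i\omega_m} = 1\), i.e.\ \(\omega_m \in 2\pi\Z\). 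Writing \(\omega_m = 2\pi k_m\) with \(k_m \in \Z\) exhibits \(f(t) = \sum_m c_m z^{k_m}\) as a finite Laurent polynomial in \(z\), and since also \(f(S^1) \subseteq G\) this is exactly the statement that \(\eta_{-\xi_1}\eta_{\xi_2} \in L_\pol G\).

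I expect the only point requiring genuine care to be the periodicity check in the first step: it is tempting to argue that \(f\) is a loop merely from \(f(1) = f(0) = 1\), but one really needs \(f(t+1) = f(t)\) for every \(t\), and this forces one to notice that conjugation by \(g\) fixes \(\exp(-t\xi_1)\). The remaining ingredients — diagonalising skew-Hermitian matrices and the linear independence of exponential characters — are entirely standard, and none of the computation is heavy.
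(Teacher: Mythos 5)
Your proof is correct, but it takes a genuinely different route from the paper's. After the periodicity check (which you handle correctly, and which is indeed where the hypothesis \(\exp(\xi_1)=\exp(\xi_2)\) enters), you diagonalise \(\xi_1\) and \(\xi_2\) separately, expand \(\eta_{-\xi_1}\eta_{\xi_2}\) as a finite exponential sum \(\sum_m c_m e^{i\omega_m t}\) in \(M_n(\C)\), and let periodicity plus linear independence of the characters \(t\mapsto e^{i\omega t}\) force every frequency into \(2\pi\Z\). The paper argues differently: it first reduces to the case \(\xi_2=0\), i.e.\ \(\exp(\xi_1)=1\), by invoking lemma~\ref{lem:comlie} to produce a logarithm \(\zeta\) of the common value \(\exp(\xi_1)=\exp(\xi_2)\) commuting with both \(\xi_j\), so that \(\eta_{-\xi_1}\eta_{\xi_2}\) factors as a product of loops of the form \(\eta_{\xi_j-\zeta}\) with \(\exp(\xi_j-\zeta)=1\); for such a loop a single diagonalisation shows the eigenvalues of the generator lie in \(2\pi i\Z\), giving entries \(z^{k_j}\) directly. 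Your argument buys independence from lemma~\ref{lem:comlie} (and from the supporting fact that \(Z(g)\) is a torus), treating \(\xi_1\) and \(\xi_2\) symmetrically at the modest cost of invoking independence of characters; the paper's reduction buys the explicit commuting factorisation inside \(U_n\), which it then uses in the remark following the proof (such a factorisation need not exist inside a general \(G\)) and which is in the same spirit as the later local-triviality constructions. Both arguments land on exactly the condition demanded by definition~\ref{def:polgrp}, namely that the Fourier expansion in \(M_n(\C)\) of this \(G\)-valued loop is a finite Laurent polynomial.
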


As part of the proof of this, we shall prove the following useful result for the unitary group:

\begin{lemma}
 \label{lem:comlie}
 Let \(g \in U_n\).
 There exists
  \(\zeta \in \exp^{-1}(g) \subseteq \mf{u}_n\)
 such that \(\lb \zeta, \xi \rb = 0\) for all
  \(\xi \in \exp^{-1}(g)\).
\end{lemma}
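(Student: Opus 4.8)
The plan is to diagonalise $g$ and work with the eigenspace decomposition. Since $g \in U_n$, we may write $\C^n = \bigoplus_{\lambda} W_\lambda$ as an orthogonal direct sum of eigenspaces, where $\lambda$ ranges over the distinct eigenvalues of $g$ on the unit circle. Any $\xi \in \exp^{-1}(g) \subseteq \mf{u}_n$ must commute with $g$ (since $g = \exp(\xi)$ is a limit of polynomials in $\xi$), hence must preserve each $W_\lambda$. So the problem decouples: on each $W_\lambda$, the restriction $\xi|_{W_\lambda}$ is a skew-Hermitian operator with $\exp(\xi|_{W_\lambda}) = \lambda \cdot \mathrm{id}_{W_\lambda}$, and I must produce a canonical such operator commuting with all the others.

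The key observation is that on a single eigenspace $W_\lambda$, writing $\lambda = e^{i\theta_0}$, the operators $\xi|_{W_\lambda}$ satisfying $\exp(\xi|_{W_\lambda}) = e^{i\theta_0}\,\mathrm{id}$ are exactly those whose eigenvalues all lie in $i(\theta_0 + 2\pi\Z)$. The scalar operator $i\theta_0 \cdot \mathrm{id}_{W_\lambda}$ is one such choice, and it is central in $\End(W_\lambda)$, hence commutes with every other candidate. Assembling these blockwise, I set $\zeta$ to be the operator acting as $i\theta_0 \cdot \mathrm{id}$ on each $W_\lambda$ (choosing, say, $\theta_0 \in (-\pi, \pi]$ for definiteness, though any fixed choice of logarithm branch works). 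Then $\zeta \in \mf{u}_n$ and $\exp(\zeta) = g$. Given any $\xi \in \exp^{-1}(g)$, it preserves each $W_\lambda$, and $\lb \zeta, \xi \rb$ restricted to $W_\lambda$ is $\lb i\theta_0 \cdot \mathrm{id}_{W_\lambda}, \xi|_{W_\lambda} \rb = 0$ since $\zeta|_{W_\lambda}$ is scalar. Summing over the blocks gives $\lb \zeta, \xi \rb = 0$.

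The one point needing care is the claim that every $\xi \in \exp^{-1}(g)$ commutes with $g$ and therefore preserves the eigenspaces $W_\lambda$; this is the step I expect to spell out most carefully, though it is standard ($g = \exp(\xi) = \sum \xi^k / k!$ commutes with $\xi$, and conversely anything commuting with $g$ preserves its eigenspaces). Everything else is linear algebra internal to each block. I do not anticipate a genuine obstacle here — the lemma is essentially the statement that a chosen branch of the logarithm, applied blockwise on eigenspaces, lands in the centre of the commutant of $g$ — but the organising idea worth stating up front is the eigenspace decoupling, after which the centrality of scalar operators does all the work.
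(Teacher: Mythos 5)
Your proof is correct, but it takes a genuinely different route from the paper. You diagonalise \(g\) directly, observe that any \(\xi \in \exp^{-1}(g)\) commutes with \(g\) and so preserves each eigenspace \(W_\lambda\), and then take \(\zeta\) to be the blockwise scalar logarithm \(i\theta_0\,\mathrm{id}_{W_\lambda}\); centrality of scalars on each block then kills the commutator. The paper instead proves a preliminary structural lemma\emhyp{}that \(Z(g)\), the centre of the centraliser of \(g\) in \(U_n\), is a torus\emhyp{}and then argues abstractly: surjectivity of \(\exp\) on a torus produces some \(\zeta \in \mf{z}(g)\) with \(\exp(\zeta) = g\), and since \(\exp(t\zeta)\) lies in \(Z(g)\) while \(\exp(t'\xi)\) lies in \(C(g)\), the two one\hyp{}parameter subgroups commute, whence \(\lb \zeta, \xi\rb = 0\). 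Your construction is more elementary and explicit (no appeal to the torus structure or to surjectivity of \(\exp\) on compact connected abelian groups), and in fact it exhibits a concrete witness for the paper's abstract existence statement: your blockwise scalar \(\zeta\) commutes with everything in \(C(g)\), so \(\zeta \in \mf{z}(g)\), and it is essentially the operator \(\log_s g\) that the paper later uses to build local sections in the proof of local triviality for \(P_\pol U_n\). It also proves something slightly stronger\emhyp{}your \(\zeta\) commutes with every operator commuting with \(g\), not just with elements of \(\exp^{-1}(g)\). What the paper's route buys is the clean structural statement about \(Z(g)\) being a torus, which frames why the unitary case is special compared with a general compact group (where lemma~\ref{lem:comlie} can fail), whereas your argument makes transparent that the whole content is just simultaneous block diagonalisation plus centrality of scalars. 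The one step you flagged\emhyp{}that \(\xi\) commutes with \(g = \exp(\xi)\) and hence preserves its eigenspaces\emhyp{}is indeed the only point needing care, and your justification of it is sound.
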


The proofs of these rely on the simple structure in \(U_n\) of the centraliser of any particular element.
For \(g \in U_n\), define \(C(g)\) and \(Z(g)\) to be the centraliser of \(g\) and its centre.
That is,
 \(C(g) \coloneqq \{h \in G : h^{-1} g h = g\}\)
and \(Z(g) = Z(C(g))\).
Clearly, \(g \in Z(g)\).

\begin{lemma}
 For any \(g \in U_n\), \(Z(g)\) is a torus.
\end{lemma}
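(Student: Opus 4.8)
The plan is to compute the centraliser $C(g)$ explicitly from the spectral decomposition of the unitary operator $g$, and then read off its centre.

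First I would use that $g \in U_n$ is diagonalisable with mutually orthogonal eigenspaces: write $\C^n = V_1 \oplus \dots \oplus V_k$, where $V_j \coloneqq \ker(g - \lambda_j)$ and $\lambda_1, \dots, \lambda_k \in \T$ are the distinct eigenvalues of $g$, and set $m_j \coloneqq \dim_\C V_j$. Since $g$ is unitary this is an orthogonal direct sum spanning all of $\C^n$, and $k \ge 1$ because $n \ge 1$.

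Next I would identify the centraliser. If $h \in U_n$ commutes with $g$ and $v \in V_j$, then $g(h v) = h(g v) = \lambda_j\, h v$, so $h v \in V_j$; thus $h$ preserves every eigenspace. Conversely, if $h$ preserves each $V_j$ then $g$ and $h$ commute on each $V_j$ (on which $g$ acts as the scalar $\lambda_j$) and hence on all of $\C^n$. A unitary operator preserving the orthogonal decomposition $\bigoplus_j V_j$ is exactly the data of a unitary operator on each summand, so
\[
  C(g) = \{\, h \in U_n : h V_j = V_j \text{ for all } j \,\} \cong U(V_1) \times \dots \times U(V_k) \cong U_{m_1} \times \dots \times U_{m_k}.
\]
Taking centres, $Z(g) = Z(C(g)) = Z(U_{m_1}) \times \dots \times Z(U_{m_k})$. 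For each $m \ge 1$ the defining representation of $U_m$ on $\C^m$ is irreducible, so by Schur's lemma its commutant in $\End(\C^m)$ is the scalars, whence $Z(U_m) = \{\, \zeta I_m : \zeta \in \T \,\} \cong \T$. Therefore $Z(g) \cong \T^k$, a torus.

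There is no serious obstacle here: this lemma is a warm-up for the torus statements that follow it. The only points worth stating with any care are the identification $Z(U_m) \cong \T$ (via Schur's lemma, or, more elementarily, because an element of $Z(U_m)$ must commute with all diagonal and all permutation unitaries and so is a scalar) and the elementary observation that a unitary preserving an orthogonal eigenspace decomposition amounts to a unitary on each summand.
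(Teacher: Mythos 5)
Your proof is correct, and it takes a genuinely different route from the paper's. You compute the centraliser explicitly: the orthogonal eigenspace decomposition of \(g\) gives \(C(g) \cong U_{m_1} \times \dotsb \times U_{m_k}\), and taking centres (Schur, or the elementary diagonal\hyp{}plus\hyp{}permutation argument) gives \(Z(g) \cong \T^k\), which is visibly a torus. The paper instead never identifies \(C(g)\) or \(Z(g)\): it observes that \(Z(g)\) is a closed abelian subgroup of \(U_n\), hence compact, so it suffices to prove connectedness, and it does this by taking an arbitrary \(h \in Z(g)\), diagonalising \emph{\(h\)}, and shrinking the arguments of its eigenvalues to produce a path from \(1_n\) to \(h\) whose points have the same eigenspaces as \(h\) (for \(t \neq 0\)) and therefore commute with exactly the same elements, so the whole path stays in \(Z(g)\). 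Your computation buys more information\emhyp{}the explicit isomorphism type of both \(C(g)\) and \(Z(g)\)\emhyp{}while the paper's argument is leaner for its purpose (it only needs ``compact abelian connected'') and its path\hyp{}shrinking device is of a piece with the eigenvalue\hyp{}interpolation arguments used elsewhere in the section. Either proof supports the subsequent use of the lemma (surjectivity of \(\exp\) on \(Z(g)\) in Lemma~\ref{lem:comlie}).
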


\begin{proof}
 The group \(C(g)\) is a closed subgroup of \(U_n\), hence its centre is a closed abelian subgroup of \(U_n\).
 In particular, it is compact.
 Therefore, it is a torus if and only if it is connected.

 Recall that two diagonalisable matrices commute if and only if they are simultaneously diagonalisable.
 This condition does not rely on the eigenvalues of either matrix but only on the eigenspaces.

 Let \(h \in Z(g)\).
 As \(h\) is unitary, it is orthogonally diagonalisable.
 Let
  \(\lambda_1, \dotsc, \lambda_l\)
 be the distinct eigenvalues of \(h\) with associated eigenspaces \(E_1, \dotsc, E_l\).
 For each \(j\), let
  \(s_j \in \lb - i \pi, i \pi \rp \)
 be such that \(e^{s_j} = \lambda_j\).

 Define
  \(\alpha \colon \lb 0, 1 \rb \to U_n\)
 to be the path such that \(\alpha(t)\) has eigenvalues \(e^{t s_j}\) and corresponding eigenspaces \(E_j\).
 Then \(\alpha(0) = 1_n\) and \(\alpha(1) = h\) so \(\alpha\) is a path from \(1_n\) to \(h\).
 By construction, \(\alpha(t)\) for \(t \ne 0\) has the same eigenspaces as \(h\) and therefore \(\alpha(t)\) commutes with exactly the same elements of \(U_n\) that \(h\) commutes with.
 Hence as \(h \in Z(g)\), \(\alpha(t) \in Z(g)\).
\end{proof}

\begin{proof}[Proof of lemma~\ref{lem:comlie}]
 As \(Z(g)\) is a torus, it is a connected compact Lie group.
 Therefore, the exponential map is surjective and so there is some
  \(\zeta \in \mf{z}(g) \subseteq \mf{u}_n\)
 with \(\exp(\zeta) = g\).
 As \(\zeta \in \mf{z}(g)\),
  \(\exp(t \zeta) \in Z(g)\)
 for all \(t \in \R\).

 Let \(\xi \in \mf{u}_n\) be such that \(\exp(\xi) = g\).
 Then for all \(t \in \R\), \(\exp(t \xi)\) commutes with \(g\).
 Hence \(\exp(t \xi) \in C(g)\)
 for all \(t\).
 Thus \(\exp(t \zeta)\) and \(\exp(t' \xi)\) commute for all \(t, t' \in \R\).
 Hence \(\lb \zeta, \xi \rb = 0\).
\end{proof}

Using this we can prove proposition~\ref{prop:liepol}.

\begin{proof}[Proof of proposition~\ref{prop:liepol}]
 Firstly, note that it is sufficient to prove this in the case of the unitary group.
 For if
  \(\eta_{-\xi_1} \eta_{\xi_2}\)
 is a loop in \(G\) which is a polynomial loop when \(G\) is considered as a subgroup of \(U_n\) then, by definition,
  \(\eta_{-\xi_1} \eta_{\xi_2}\)
 is a polynomial loop in \(G\).

 Secondly, note that it is sufficient to consider the case where \(\xi_2 = 0\).
 This forces \(\exp(\xi_1) = 1_n\).
 To deduce the general case from this simpler one, note that by lemma~\ref{lem:comlie} that there is some \(\zeta \in \mf{u}_n\) with
  \(\exp(\zeta) = \exp(\xi_1)\)
 (whence also \(\exp(\xi_2)\)) such that \(\lb \zeta,\xi_j \rb = 0\).
 Then
  \(\exp(\xi_j - \zeta) = 1_n\)
 so, by assumption,
  \(\eta_{(\xi_j - \zeta)}\)
 is a polynomial loop.
 The identity:
 \[
  \eta_{-\xi_1} \eta_{\xi_2} = \eta_{-\xi_1} \eta_\zeta \eta_{-\zeta} \eta_{\xi_2} = \eta_{(-\xi_1 + \zeta)} \eta_{(\zeta - \xi_2)}.
 \]
 demonstrates that this is a polynomial loop.

 Thus we need to show that \(\eta_\xi\) is a polynomial loop if \(\exp(\xi) = 1\).
 To show this, we diagonalise \(\xi\).
 If \(s\) is an eigenvalue of \(\xi\) then \(e^{s}\) is an eigenvalue of \(\exp(\xi) = 1\).
 The eigenvalues of \(\xi\) therefore lie in \(2 \pi i \Z\).
 Hence there is a basis of \(\C^n\) with respect to which \(\eta_\xi\) is the path:
 \[
  t \to
  \begin{bmatrix}
   e^{2 \pi i t k_1} & 0 &
   \dots &
   0 \\
   0 &
   e^{2 \pi i t k_2} &
   \dots &
   0 \\
   \hdotsfor{4} \\
   0 &
   0 &
   \dots &
   e^{2 \pi i t k_n}
  \end{bmatrix}
 \]
 for some \(k_j \in \Z\).
 Since
  \(e^{2 \pi i t k} = z^k\)
 for \(k \in \Z\), this is a polynomial loop (viewed as a periodic path).
\end{proof}

Note that for a general group \(G\), although the loop
 \(\eta_{\xi_1} \eta_{-\xi_2}\)
lies in \(\Omega_\pol G\), there may be no factorisation in \(G\) as
 \(\eta_{\xi_1 - \zeta} \eta_{\zeta - \xi_2}\)
since Lemma~\ref{lem:comlie} need not hold for a general Lie group.

\subsection{The Path Spaces}
\label{sec:path}

In this part we shall give an explicit construction of the principal \(L_\pol G\)\enhyp{}bundle over \(G\) for \(G\) each of \(U_n\), \(S U_n\), and \(S O_n\).
We shall also construct a similar bundle for the smooth loop group.
These bundles will be denoted by \(P_\pol G\) and \(P_\per G\) (the ``\(\per\)'' stands for ``periodic'').

To demonstrate that these are principal bundles with the appropriate fibre we have to show two things: firstly, that the bundles are locally trivial; and secondly, that the fibres have an action of the appropriate loop group which identifies the fibre with that group.
The second of these is straightforward, the first is simple for the smooth case but is surprisingly difficult for the polynomial loop group.
We shall only consider the cases of \(U_n\), \(S U_n\), and \(S O_n\).

\begin{defn}
 \label{def:perpol}
 Let \(G\) be a compact, connected Lie group, \(\mf{g}\) its Lie algebra.
 We define \(P_\per G\) and \(P_\pol G\) as follows:
 \begin{enumerate}
 \item
   \(P_\per G\) is the space of smooth paths
   \(\alpha \colon \R \to G\)
  with the property that
   \(\alpha(t + 1)\alpha(t)^{-1}\)
  is constant.

 \item
   \(P_\pol G \subseteq P_\per G\)
  consists of those paths of the form \(\eta_\xi \gamma\) for some \(\xi \in \mf{g}\) and
   \(\gamma \in L_\pol G\).
 \end{enumerate}

 The projection map \(P_\per G \to G\) is given by
  \(\alpha \to \alpha(1) \alpha(0)^{-1}\).
 Notice that when restricted to \(P_\pol G\), this maps \(\eta_\xi \gamma\) to \(\exp(\xi)\).
\end{defn}

Recall from section~\ref{sec:polloops} that for \(\xi \in \mf{g}\) the path
 \(\eta_\xi \colon \R \to G\)
is defined as the path \(t \to \exp( t \xi)\).

Observe that a path in \(P_\per G\) is completely determined by its values on the interval \(\lb 0,1 \rb\).
However, not every smooth path \(\lb 0,1 \rb \to G\) defines an element of \(P_\per G\): one needs certain conditions on the derivatives at the endpoints.
Defining \(P_\per G\) as we have seems the simplest way to state these conditions.

It will sometimes be useful to consider an element of \(P_\per G\) to be a pair
 \((g, \alpha) \in G \times P G\)
such that
 \(\alpha(t + 1) = g\alpha(t)\).
Here \(P G\) is \emph{all} smooth paths \(\R \to G\).
Although \(g\) is completely determined by \(\alpha\), this viewpoint makes it more explicit.

We shall now investigate the desired properties of these spaces.
Neither is a group (unlike the analogous continuous situation), but the group \(G\) acts in two ways:

\begin{lemma}
 \label{lem:pathconj}
 The group \(G\) acts on \(P_\per G\) by two actions:
 \[
  g \cdot_m \alpha = g \alpha, \qquad g \cdot_c \alpha = g \alpha g^{-1}.
 \]
 These actions restrict to actions on \(P_\pol G\).
 For both actions, the action of \(G\) on itself by conjugation makes the projection \(P_\per G \to G\) \(G\)\enhyp{}equivariant (hence also for \(P_\pol G \to G\)).
\end{lemma}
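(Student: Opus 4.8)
The plan is to verify directly that the two formulas define group actions, that they preserve $P_\pol G$, and that the projection is equivariant for conjugation on $G$. First I would check the multiplicative action $g\cdot_m\alpha = g\alpha$: clearly $1\cdot_m\alpha = \alpha$ and $(gh)\cdot_m\alpha = (gh)\alpha = g(h\alpha) = g\cdot_m(h\cdot_m\alpha)$, and smoothness is immediate. One must observe this stays in $P_\per G$: if $\alpha(t+1)\alpha(t)^{-1}$ is constant, equal to some $g_0$, then $(g\alpha)(t+1)(g\alpha)(t)^{-1} = g\alpha(t+1)\alpha(t)^{-1}g^{-1} = g g_0 g^{-1}$, still constant. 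For the conjugation action $g\cdot_c\alpha = g\alpha g^{-1}$ the same routine check gives $(gh)\cdot_c\alpha = gh\alpha h^{-1}g^{-1} = g\cdot_c(h\cdot_c\alpha)$, and $(g\alpha g^{-1})(t+1)(g\alpha g^{-1})(t)^{-1} = g\alpha(t+1)\alpha(t)^{-1}g^{-1} = g g_0 g^{-1}$, so again we remain in $P_\per G$.

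Next I would check that both actions restrict to $P_\pol G$. An element of $P_\pol G$ has the form $\eta_\xi\gamma$ with $\xi\in\mf g$ and $\gamma\in L_\pol G$. For $g\cdot_c$: writing $g = \exp(\zeta)$ is not available in general, so instead I would argue $g(\eta_\xi\gamma)g^{-1} = (g\eta_\xi g^{-1})(g\gamma g^{-1})$; now $g\eta_\xi g^{-1}$ is the path $t\mapsto g\exp(t\xi)g^{-1} = \exp(t\,\Ad_g\xi) = \eta_{\Ad_g\xi}$, and $g\gamma g^{-1}$ is a polynomial loop since conjugation is linear on $M_n(\C)$ and hence preserves finite Laurent polynomials and the group $G$. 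So $g\cdot_c(\eta_\xi\gamma) = \eta_{\Ad_g\xi}(g\gamma g^{-1})\in P_\pol G$. For $g\cdot_m$: we have $g\eta_\xi\gamma = \eta_\xi(\eta_\xi^{-1} g\eta_\xi\gamma)$; here $\eta_\xi^{-1}g\eta_\xi$ is the path $t\mapsto\exp(-t\xi)g\exp(t\xi)$, which need not be a loop, so this naive splitting fails. Instead I would use that $g\in G$ is a constant loop, hence $g\in L_\pol G$, and write $g\eta_\xi\gamma = \eta_\xi(\eta_\xi^{-1}g\eta_\xi)\gamma$ — but rather than that, observe more simply that by Proposition~\ref{prop:liepol} applied suitably, or directly: for $G = U_n$, $SU_n$, $SO_n$ we may choose $\zeta$ with $\exp(\zeta)=g$ and $g\eta_\xi = \eta_\zeta\eta_{-\zeta}\eta_{\zeta}\eta_\xi = \eta_\zeta(\eta_{-\zeta}g\eta_\xi)$; since $\exp(-\zeta)\exp(\xi)$-type products give polynomial loops via Proposition~\ref{prop:liepol}, one can repackage $g\eta_\xi\gamma$ in the form $\eta_{\zeta+\xi'}\gamma'$. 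The cleanest route is: $g\cdot_m(\eta_\xi\gamma) = (g\eta_\xi g^{-1})(g\gamma) = \eta_{\Ad_g\xi}(g\gamma)$ is wrong as written since $g\cdot_m$ is not conjugation; so I would simply combine $g\cdot_m = (g\cdot_c)\circ(\text{right multiplication by }g)$, noting right multiplication by $g\in L_\pol G$ visibly preserves $P_\pol G$, and then apply the already-established invariance under $\cdot_c$.

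Finally, equivariance of the projection $\pi\colon P_\per G\to G$, $\pi(\alpha) = \alpha(1)\alpha(0)^{-1}$, with respect to conjugation on the base. For $\cdot_m$: $\pi(g\cdot_m\alpha) = (g\alpha)(1)(g\alpha)(0)^{-1} = g\alpha(1)\alpha(0)^{-1}g^{-1} = g\,\pi(\alpha)\,g^{-1}$, as required. For $\cdot_c$: $\pi(g\cdot_c\alpha) = (g\alpha g^{-1})(1)(g\alpha g^{-1})(0)^{-1} = g\alpha(1)g^{-1}g\alpha(0)^{-1}g^{-1} = g\,\pi(\alpha)\,g^{-1}$, again as required; and the restriction to $P_\pol G$ is the assertion that $\pi(\eta_\xi\gamma) = \exp(\xi)$ transforms correctly, which is consistent since $\pi(\eta_{\Ad_g\xi}(g\gamma g^{-1})) = \exp(\Ad_g\xi) = g\exp(\xi)g^{-1}$. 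I expect the only genuine subtlety — hence the main obstacle — to be confirming that $P_\pol G$ is preserved under $\cdot_m$, because the obvious factorisation $\eta_\xi^{-1}g\eta_\xi$ is not a loop; the fix is to factor $g\cdot_m$ through $g\cdot_c$ composed with right translation by the constant (hence polynomial) loop $g$, both of which manifestly preserve $P_\pol G$. Everything else is a direct substitution.
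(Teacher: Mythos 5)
Your proof is correct, and its substance coincides with the paper's: the periodicity computations, the equivariance of \(\alpha \mapsto \alpha(1)\alpha(0)^{-1}\), and the treatment of \(\cdot_c\) via \(g\eta_\xi g^{-1} = \eta_{\Ad_g\xi}\) together with \(g\gamma g^{-1} \in L_\pol G\) are exactly the paper's argument. The one place you diverge is \(\cdot_m\), and there you reject a step that is in fact valid: the identity \(g\,\eta_\xi\gamma = (g\eta_\xi g^{-1})(g\gamma) = \eta_{\Ad_g\xi}\,(g\gamma)\) is a pointwise algebraic identity (insert \(g^{-1}g\); no conjugation action is being invoked), and since \(L_\pol G\) is closed under left multiplication by constant loops, this disposes of \(\cdot_m\) in one line. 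The paper does precisely this, handling both actions at once by writing \(g\,\eta_\xi\gamma h = \eta_{\Ad_g\xi}\,g\gamma h\) with \(h \in \{1_G, g^{-1}\}\). Your substitute—factoring \(g\cdot_m\alpha = g\cdot_c(\alpha g)\) and noting that right translation by the constant (hence polynomial) loop \(g\) preserves both \(P_\per G\) and \(P_\pol G\)—is sound and rests on the same closure property of \(L_\pol G\), so the detour costs nothing logically; it is just unnecessary, and your earlier attempts via \(\eta_\xi^{-1}g\eta_\xi\) or a choice of \(\zeta\) with \(\exp(\zeta)=g\) (which need not exist smoothly and is never needed here) should be deleted.
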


\begin{proof}
 Let \(g \in G\) and \(\alpha \in P_\per G\).
 Both \(g \alpha\) and \(g \alpha g^{-1}\) are smooth paths in \(G\) so we only need to check the periodicity condition.
 Let
  \(h = \alpha(t + 1) \alpha(t)^{-1}\).
 Then:
 \begin{align*}
  (g \alpha)(t + 1) (g \alpha)(t)^{-1} &%
  = g \alpha(t + 1) \alpha(t)^{-1} g^{-1} = g h g^{-1}.
  \\
  (g \alpha g^{-1}) (t + 1) (g \alpha g^{-1})(t)^{-1} &%
  = g \alpha(t + 1) g^{-1} g \alpha(t)^{-1} g^{-1} \\
  &%
  = g \alpha(t + 1) \alpha(t)^{-1} g^{-1} = g h g^{-1}.
 \end{align*}
 This also proves the statement about the induced action on \(G\).

 If \(\alpha \in P_\pol G\) then \(\alpha\) is of the form \(\eta_\xi \gamma\) for some \(\xi \in \mf{g}\) and \(\gamma \in L_\pol G\).
 Let \(h\) be either \(g^{-1}\) or \(1_G\).
 Then
  \(g \eta_\xi \gamma h = \eta_{(\Ad_g \xi)} g \gamma h\).
 As \(L_\pol G\) is closed under left and right multiplication by \(G\), this lies in \(P_\pol G\) as required.
\end{proof}

\begin{proposition}
 Define an action of \(L G\) on \(P_\per G\) by sending
  \((\alpha, \gamma) \in P_\per G \times L G\)
 to the path
  \(t \to \alpha(t) \gamma(t)\).
 This action is well\hyp{}defined and identifies the fibres of \(P_\per G \to G\) with \(L G\).
 It restricts to an action of \(L_\pol G\) on \(P_\pol G\) and identifies the fibres of \(P_\pol G \to G\) with \(L_\pol G\).
\end{proposition}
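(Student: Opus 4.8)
The plan is to verify the action axioms and well-definedness directly, then to show the action is free and transitive on each fibre so that a choice of basepoint identifies the fibre with the loop group; the polynomial case then follows by the same recipe, the one extra ingredient being Proposition~\ref{prop:liepol}.

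First I would check that the formula defines a fibre-preserving action. If \(\alpha \in P_\per G\) has \(\alpha(t+1)\alpha(t)^{-1} = g\) constant and \(\gamma \in L G\), so \(\gamma(t+1) = \gamma(t)\), then
\(
  (\alpha\gamma)(t+1)(\alpha\gamma)(t)^{-1} = \alpha(t+1)\gamma(t+1)\gamma(t)^{-1}\alpha(t)^{-1} = \alpha(t+1)\alpha(t)^{-1} = g;
\)
smoothness is clear, so \(\alpha\gamma \in P_\per G\) and it lies over the same point \(g \in G\). The identities \(\alpha\cdot 1 = \alpha\) and \((\alpha\cdot\gamma_1)\cdot\gamma_2 = \alpha\cdot(\gamma_1\gamma_2)\) are immediate from pointwise multiplication in \(G\), so we have a right action of \(L G\) on \(P_\per G\) covering the identity of \(G\).

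Next I would establish freeness and transitivity on the fibre over a point \(g\). For freeness, \(\alpha\cdot\gamma = \alpha\) forces \(\gamma(t) = 1_G\) for all \(t\). For transitivity, given \(\alpha,\beta \in P_\per G\) both lying over \(g\), set \(\gamma(t) := \alpha(t)^{-1}\beta(t)\); from \(\alpha(t+1) = g\alpha(t)\) and \(\beta(t+1) = g\beta(t)\) one computes \(\gamma(t+1) = \gamma(t)\), so \(\gamma \in L G\) and \(\alpha\cdot\gamma = \beta\). Since \(G\) is compact and connected, \(\exp\colon \mf{g}\to G\) is surjective, so for any \(\xi\) with \(\exp(\xi) = g\) the path \(\eta_\xi\) lies in the fibre over \(g\) and the fibres are non-empty. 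Fixing such a basepoint \(\alpha_0\), the map \(\gamma \mapsto \alpha_0\cdot\gamma\) is a bijection from \(L G\) onto the fibre over \(g\), with inverse \(\alpha \mapsto \alpha_0^{-1}\alpha\); this is the asserted identification of fibres with \(L G\).

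Finally, for the polynomial statement I would note first that the action restricts: if \(\alpha = \eta_\xi\gamma\) with \(\gamma \in L_\pol G\) and \(\delta \in L_\pol G\) then \(\alpha\cdot\delta = \eta_\xi(\gamma\delta)\) with \(\gamma\delta \in L_\pol G\), so \(P_\pol G\) is invariant and freeness is inherited from the smooth case. The same observation as above, with \(\gamma\) the constant loop \(1\), shows \(\eta_\xi \in P_\pol G\), so the fibres of \(P_\pol G\to G\) are non-empty. For transitivity over \(g\), write \(\alpha = \eta_{\xi_1}\gamma_1\) and \(\beta = \eta_{\xi_2}\gamma_2\) in \(P_\pol G\); both lying over \(g\) means \(\exp(\xi_1) = g = \exp(\xi_2)\). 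The loop \(\gamma\) produced in the smooth argument is
\(
  \gamma(t) = \gamma_1(t)^{-1}\bigl(\eta_{-\xi_1}\eta_{\xi_2}\bigr)(t)\,\gamma_2(t),
\)
and this is exactly where Proposition~\ref{prop:liepol} enters: it gives \(\eta_{-\xi_1}\eta_{\xi_2} \in L_\pol G\), so \(\gamma\) is a pointwise product of three polynomial loops and hence lies in \(L_\pol G\). This transitivity step is the only genuinely non-trivial point in the proof --- without Proposition~\ref{prop:liepol} we would know only that \(\gamma\) is a smooth loop --- and with it the basepoint argument identifies the fibres of \(P_\pol G\to G\) with \(L_\pol G\) exactly as in the smooth case.
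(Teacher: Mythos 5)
Your proposal is correct and follows essentially the same route as the paper: the same periodicity computation for well-definedness, the same inverse map \(\gamma(t) = \alpha(t)^{-1}\beta(t)\) for transitivity, surjectivity of \(\exp\) for non-emptiness of fibres, and Proposition~\ref{prop:liepol} applied to \(\eta_{-\xi_1}\eta_{\xi_2}\) as the key step in the polynomial case. The explicit verification of the action axioms and freeness is a minor addition not spelled out in the paper, but otherwise the arguments coincide.
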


\begin{proof}
 The path
  \(t \to \alpha(t) \gamma(t)\)
 is a smooth path \(\R \to G\) (considering \(\gamma\) as a periodic path).
 We need merely check the periodicity condition.
 Since
  \(\gamma(t + 1) = \gamma(t)\)
 for all \(t \in \R\), we have:
 \begin{align*}
  (\alpha \gamma) (t + 1) (\alpha \gamma)(t)^{-1} &%
  = \alpha(t + 1) \gamma (t + 1) \gamma(t)^{-1} \alpha(t)^{-1} \\
  &%
  = \alpha(t + 1) \alpha(t)^{-1}.
 \end{align*}
 Hence
  \(\alpha \gamma \in P_\per G\).
 This also shows that \(\alpha \gamma\) lies in the same fibre as \(\alpha\).

 For an inverse, let
  \(\alpha, \beta \in P_\per G\)
 be such that
  \(\alpha(1) \alpha(0)^{-1} = \beta(1) \beta(0)^{-1}\).
 As \(\alpha\) and \(\beta\) lie in \(P_\per G\), this means that
  \(\alpha(t + 1)\alpha(t)^{-1} = \beta(t + 1) \beta(t)^{-1}\)
 for all \(t \in \R\).
 Rearranging this yields
  \(\alpha(t + 1)^{-1} \beta(t + 1) = \alpha(t)^{-1} \beta(t)\).
 Thus the path \(\gamma\) given by
  \(\gamma(t) = \alpha(t)^{-1} \beta(t)\)
 is a loop.
 Moreover, it is smooth.
 Clearly
  \(\alpha \gamma = \beta\)
 so this is the inverse map which identifies a non\hyp{}empty fibre of \(P_\per G \to G\) with \(L G\).

 In the polynomial case, if \(\alpha \in P_\pol G\) and
  \(\gamma \in L_\pol G\)
 then by definition,
  \(\alpha = \eta_\xi \beta\)
 for some polynomial loop \(\beta\).
 Therefore
  \(\alpha \gamma = \eta_\xi (\beta \gamma)\)
 and hence lies in \(P_\pol G\).

 Conversely, suppose that
  \(\alpha, \beta \in P_\pol G\)
 lie in the same fibre.
 We need to show that the loop
  \(t \to \alpha^{-1}(t) \beta(t)\)
 is a polynomial loop.
 Let
  \(\alpha = \eta_{\xi_1} \widehat{\alpha}\)
 and
  \(\beta = \eta_{\xi_2} \widehat{\beta}\)
 where \(\widehat{\alpha}\) and \(\widehat{\beta}\) are polynomial loops.
 Since \(\alpha\) and \(\beta\) lie in the same fibre,
  \(\exp(\xi_1) = \exp(\xi_2)\).
 Thus:
 \[
  \gamma = \widehat{\alpha}^{-1} \eta_{-\xi_1} \eta_{\xi_2} \widehat{\beta}.
 \]
 By proposition~\ref{prop:liepol}, the two terms in the centre give a polynomial loop, hence \(\gamma\) is a polynomial loop.

 To complete the proof of the proposition, we need to show that no fibre of \(P_\pol G \to G\) is empty, whence also no fibre of \(P_\per G \to G\) is empty.
 As \(G\) is a compact, connected Lie group, for each \(g \in G\) there is some \(\xi \in \mf{g}\) such that \(\exp(\xi) = g\).
 The path \(\eta_\xi\) lies in \(P_\pol G\) (and thus in \(P_\per G\)) and is in the fibre above \(g\).
 Thus the fibres are non\hyp{}empty.
\end{proof}

Proving that \(P_\per G\) is locally trivial is relatively straightforward.
The case of \(P_\pol G\) is harder.
Therefore we deal with \(P_\per G\) quickly now before passing to the\emhyp{}for this paper\emhyp{}more relevant case of the polynomial loops in the next section.

\begin{proposition}
 The space \(P_\per G\) is locally trivial over \(G\).
\end{proposition}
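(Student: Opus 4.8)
The plan is to produce, for each \(g_0 \in G\), an open neighbourhood \(U \ni g_0\) together with a continuous local section \(\sigma \colon U \to P_\per G\) of the projection \(\pi \colon P_\per G \to G\), \(\alpha \mapsto \alpha(1)\alpha(0)^{-1}\). Given such a \(\sigma\), the map \(U \times L G \to \pi^{-1}(U)\), \((g,\gamma) \mapsto \bigl(t \mapsto \sigma(g)(t)\gamma(t)\bigr)\), is the required trivialisation: it is \(L G\)-equivariant, commutes with \(\pi\) (multiplying by a loop does not change \(\pi\)), has image all of \(\pi^{-1}(U)\), and its inverse \(\alpha \mapsto \bigl(\pi(\alpha),\, \sigma(\pi(\alpha))^{-1}\alpha\bigr)\) is well defined and continuous by the proposition identifying the fibres of \(P_\per G \to G\) with \(L G\) — indeed \(\sigma(\pi(\alpha))^{-1}\alpha\) is a smooth loop because \(\sigma(\pi(\alpha))\) and \(\alpha\) lie in a common fibre — together with continuity of \(\pi\), of \(\sigma\), and of the group operations on \(P G\). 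So everything reduces to producing continuous local sections.

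The useful device is this: a smooth path \(\alpha \colon \R \to G\) with \(\alpha(0) = 1\) lies in \(P_\per G\) exactly when its left logarithmic derivative \(\delta \coloneqq \alpha^{-1}\alpha'\) is \(1\)-periodic, i.e.\ \(\delta \in \Ci(S^1,\mf g)\); conversely \(\alpha\) is recovered from \(\delta\) as the solution of \(\alpha' = \alpha\delta\), \(\alpha(0) = 1\), and then \(\pi(\alpha) = \alpha(1)\) equals the holonomy \(\operatorname{hol}(\delta)\). (If \(\delta\) is \(1\)-periodic then \(t \mapsto \alpha(1)^{-1}\alpha(t+1)\) solves the same initial value problem as \(\alpha\), hence equals \(\alpha\); so \(\alpha(t+1) = \alpha(1)\alpha(t)\).) Hence it is enough, for each \(g_0\), to find a smooth family \((\delta_g)_{g \in U}\) in \(\Ci(S^1,\mf g)\) with \(\operatorname{hol}(\delta_g) = g\); then \(\sigma(g)\), defined as the solution of \(\sigma' = \sigma\delta_g\) with \(\sigma(0) = 1\), is in \(P_\per G\) with \(\pi(\sigma(g)) = g\), and it depends continuously on \(g\) since the solution of the ODE depends continuously on its coefficients.

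To get such a family, fix \(\xi_0 \in \mf g\) with \(\exp(\xi_0) = g_0\) — possible as \(G\) is compact and connected — and fix once and for all a smooth \(\phi \colon S^1 \to \R\) with \(\int_{S^1}\phi = 1\) and support in a small neighbourhood of the basepoint. Let \(F \colon \mf g \to G\), \(F(\zeta) = \operatorname{hol}(\xi_0 + \phi\,\zeta)\), so \(F(0) = g_0\). A variation-of-parameters computation identifies the differential \(dF_0 \colon \mf g \to T_{g_0}G\) with \(v \mapsto g_0 \int_{S^1} \phi(t)\,\Ad_{\exp(-(1-t)\xi_0)}v\,dt\); since \(\phi\) is concentrated near the basepoint, where \(\Ad_{\exp(-(1-t)\xi_0)}\) is near the identity, this differential is near \(v \mapsto g_0 v\) and in particular invertible. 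By the inverse function theorem \(F\) restricts to a diffeomorphism of a neighbourhood of \(0 \in \mf g\) onto a neighbourhood \(U\) of \(g_0\); writing \(\zeta(\cdot)\) for the inverse and putting \(\delta_g \coloneqq \xi_0 + \phi\,\zeta(g)\) gives the family, and with it the section \(\sigma \colon U \to P_\per G\).

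The only point that is not entirely routine is this last construction near a \(g_0\) at which \(\exp\) is not a local diffeomorphism — for instance \(g_0 = -1 \in S U_2\), where \(d\exp\) is singular at every preimage of \(g_0\). One therefore cannot take \(\sigma(g) = \eta_{\log g}\), and multiplying \(\eta_{\xi_0}\) by loops leaves \(\pi\) unchanged, so no naive fix works. Enlarging the parameter space to the affine space \(\Ci(S^1,\mf g)\) and perturbing \(\xi_0\) by the extra term \(\phi\,\zeta\) is exactly what makes the derivative of the holonomy surjective irrespective of \(g_0\). Everything else — the ODE estimates, continuity of \(\pi\), and the algebra of \(P G\) — is soft.
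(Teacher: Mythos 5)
Your argument is correct, but it reaches the local sections by a genuinely different route from the paper. The paper's proof takes the \(\xi \in \mf{g}\) with \(\exp(\xi) = g\), a chart \(\phi\) at \(g\), and a reparametrising map \(\rho\) that is constant near the endpoints, and builds the section by concatenation: run along \(t \mapsto \exp(2\rho(t)\xi)\) on \(\lb 0, \frac12 \rb\), then travel from \(g\) to the nearby point \(h\) inside the chart on \(\lb \frac12, 1 \rb\); the constancy of \(\rho\) near the seams is what makes the periodic extension smooth. You instead parametrise elements of \(P_\per G\) (based at \(1\)) by their periodic logarithmic derivatives, and produce the section by showing that the holonomy map restricted to the finite\hyp{}dimensional affine slice \(\zeta \mapsto \xi_0 + \phi\,\zeta\) is a local diffeomorphism onto a neighbourhood of \(g_0\), via the variation\hyp{}of\hyp{}parameters formula for \(dF_0\) and the inverse function theorem. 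The paper's construction is more elementary and completely explicit, at the cost of the slightly fiddly gluing; yours avoids all seam\hyp{}smoothness issues and packages the smooth dependence on \(g\) into standard ODE theory, at the cost of the derivative computation. One small caveat in your justification of invertibility: \(\Ad_{\exp(-(1-t)\xi_0)}\) is near the identity only for \(t\) near \(1\); for \(t\) near \(0\) it is near \(\Ad_{g_0^{-1}}\), so if \(\phi\) has support on both sides of the basepoint the average can degenerate (e.g.\ when \(\Ad_{g_0^{-1}}\) has eigenvalue \(-1\)). Taking \(\phi \ge 0\) supported in a small one\hyp{}sided interval \((1-\epsilon,1)\) (with \(\epsilon\) chosen after \(\xi_0\), so \(\phi\) cannot literally be fixed ``once and for all'') makes \(dF_0\) close to \(v \mapsto g_0 v\) as you claim, and the rest of the argument goes through unchanged.
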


\begin{proof}
 To prove this, we require local sections.
 Let \(g \in G\).
 Let \(\xi \in \mf{g}\) be such that \(\exp(\xi) = g\).
 Let
  \(\rho \colon \lb 0,\frac12 \rb \to \lb 0, \frac12 \rb\)
 be a smooth surjection which preserves the endpoints and is constant in a neighbourhood of each endpoint.
 Let \(\phi \colon V \to U\) be a chart for \(G\) with \(U\) a neighbourhood of \(g\) such that \(\phi^{-1}(g) = 0\).

 For \(h \in U\), define a path
  \(\alpha_h \colon \lb 0, 1 \rb \to G\)
 by:
 \[
  \alpha_h(t) = \begin{cases} \exp(2\rho(t) \xi) &
   t \in \lb 0, \frac12 \rb \\
   \phi((2\rho(t - \frac12) + 1)\phi^{-1}(h)) &
   t \in \lb \frac12, 1 \rb
   	 \end{cases}
 \]
 By construction, \(\alpha_h\) is continuous.
 Since \(\alpha_h\) is constant in a neighbourhood of \(\frac12\) and is smooth either side, it is smooth.
 Moreover, as it is constant in neighbourhoods of \(0\) and \(1\), the concatenation
  \(\alpha_h \sharp (\alpha_h(1) \alpha_h)\)
 is smooth.
 Hence \(\alpha_h\) extends via the formula:
 \[
  \alpha_h(t + n) = \alpha_h(1)^n \alpha_h(t)
 \]
 for \(t \in \lb 0,1 \rp\) and \(n \in \Z\), to a smooth path \(\R \to G\) such that
  \(\alpha_h(t + 1) = \alpha_h(1) \alpha_h(t)\)
 for all \(t \in \R\).

 Clearly, \(\alpha_h(1) = h\).
 Also, the assignment \(h \to \alpha_h\) is smooth.
 Therefore, \(h \to \alpha_h\) is a local section of \(P_\per G\) in a neighbourhood of \(g\).
\end{proof}

\subsubsection{The Polynomial Path Space}

The case of the polynomial path space is harder.
Regarding local sections, it would appear from the definition that there are natural local sections, namely \(g \to \eta_\xi\) where \(\exp(\xi) = g\).
However, except in the case of the unitary group, there is in general no way to choose \(\xi\) smoothly in \(g\) for all points \(g \in G\) (it is always possible to do so for an open dense subset, but this is not good enough).

In fact, we are not able to prove that \(P_\pol G \to G\) is locally trivial for all compact, connected \(G\) at this time.
The methods we employ work on a case\hyp{}by\hyp{}case basis.
This is sufficient for our needs as we are mainly interested in ordinary vector bundles with inner products and thus in the structure groups \(U_n\) and \(S O_n\).
We shall prove that \(P_\pol G \to G\) is locally trivial for these groups and also for \(S U_n\).
There is no \emph{a priori} reason why the argument for \(S O_n\) should not extend to \(S p_n\), using quaternionic structures in place of complex structures but we feel that this case is outside the focus of this paper.

The following result will prove useful in examining the structure of \(P_\pol G\) in terms of \(P_\pol U_n\).

\begin{lemma}
 \label{lem:polsub}
 Let \(G\) be a compact, connected Lie group.
 Consider \(G\) as a subgroup of some \(U_n\).
 Then
  \(P_\pol G = P_\per G \cap P_\pol U_n\).
\end{lemma}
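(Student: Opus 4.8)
The plan is to prove the two inclusions separately; the inclusion $P_\pol G \subseteq P_\per G \cap P_\pol U_n$ is immediate and the reverse inclusion is the substantive one. For the easy direction, $P_\pol G \subseteq P_\per G$ holds by Definition~\ref{def:perpol}, and if $\alpha = \eta_\xi \gamma$ with $\xi \in \mf{g} \subseteq \mf{u}_n$ and $\gamma \in L_\pol G \subseteq L_\pol U_n$, then the same definition applied to $U_n$ shows $\alpha \in P_\pol U_n$.

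For the reverse inclusion, let $\alpha \in P_\per G \cap P_\pol U_n$. Since $\alpha \in P_\pol U_n$ we may write $\alpha = \eta_\zeta \delta$ with $\zeta \in \mf{u}_n$ and $\delta \in L_\pol U_n$; the projection then sends $\alpha$ to $\exp(\zeta) = \alpha(1)\alpha(0)^{-1}$, which lies in $G$ because $\alpha$ is $G$\enhyp{}valued. As $G$ is compact and connected, $\exp \colon \mf{g} \to G$ is surjective, so I can choose $\xi \in \mf{g}$ with $\exp(\xi) = \exp(\zeta)$. The claim is that $\gamma \coloneqq \eta_{-\xi}\,\alpha$ lies in $L_\pol G$; granting this, $\alpha = \eta_\xi \gamma \in P_\pol G$ and we are done.

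Verifying the claim splits into three points. First, $\gamma$ is a smooth loop: since $\alpha \in P_\per G$ we have $\alpha(t+1) = \exp(\zeta)\alpha(t) = \exp(\xi)\alpha(t)$, and because $-t\xi$ and $-\xi$ commute, $\gamma(t+1) = \exp(-(t+1)\xi)\alpha(t+1) = \exp(-t\xi)\exp(-\xi)\exp(\xi)\alpha(t) = \gamma(t)$. Second, $\gamma$ is $G$\enhyp{}valued, since $\eta_{-\xi}$ (as $\xi \in \mf{g}$) and $\alpha$ both take values in $G$. Third, $\gamma$ expanded as a Fourier series in $M_n(\C)$ is a finite Laurent polynomial: writing $\gamma = (\eta_{-\xi}\eta_\zeta)\delta$, the loop $\eta_{-\xi}\eta_\zeta$ is a polynomial loop by Proposition~\ref{prop:liepol} applied \emph{in} $U_n$ (using $\exp(\xi) = \exp(\zeta)$ there), while $\delta \in L_\pol U_n$, so their product is polynomial in $M_n(\C)$. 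By Definition~\ref{def:polgrp}, a $G$\enhyp{}valued loop whose Fourier series in $M_n(\C)$ is a finite Laurent polynomial is exactly an element of $L_\pol G$, which gives $\gamma \in L_\pol G$ as required.

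I expect the only genuine subtlety to be the third point. The intermediate loop $\eta_{-\xi}\eta_\zeta$ is a polynomial loop in $U_n$ but in general is \emph{not} $G$\enhyp{}valued, because $\zeta$ need not lie in $\mf{g}$; so one cannot obtain the polynomiality of $\gamma$ from a factorisation taking place inside $G$, and in particular cannot invoke Proposition~\ref{prop:liepol} for $G$ directly. The resolution is that the definition of $L_\pol G$ is extrinsic\emhyp{}it tests the Fourier series in the ambient $M_n(\C)$\emhyp{}so it is enough to know separately that $\gamma$ is $G$\enhyp{}valued and that $\gamma$ is polynomial in $M_n(\C)$, the latter following by working at the level of $U_n$.
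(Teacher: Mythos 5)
Your proposal is correct and follows essentially the same route as the paper: both write $\alpha$ using the $U_n$\enhyp{}data, observe that the projection $\alpha(1)\alpha(0)^{-1}$ lies in $G$, use surjectivity of $\exp$ on the compact connected group $G$ to pick a Lie\hyp{}algebra element of $\mf{g}$ with the same exponential, and then invoke Proposition~\ref{prop:liepol} in $U_n$ together with the extrinsic characterisation $L_\pol G = L G \cap L_\pol U_n$ to conclude. The subtlety you flag\emhyp{}that the auxiliary polynomial loop need not be $G$\enhyp{}valued\emhyp{}is exactly the point the paper's argument also navigates, so no changes are needed.
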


\begin{proof}
 Clearly
  \(P_\pol G \subseteq P_\per G \cap P_\pol U_n\).
 For the converse, let
  \(\alpha \in P_\per G \cap P_\pol U_n\).
 Then
  \(\alpha = \eta_\xi \gamma\)
 for some \(\xi \in \mf{u}_n\) and
  \(\gamma \in L_\pol U_n\).
 Now
  \(\exp(\xi) = \eta_\xi(1) = \alpha(1) \in G\)
 since \(\alpha \in P_\per G\).
 Choose \(\zeta \in \mf{g}\) such that
  \(\exp(\zeta) = \exp(\xi)\).
 Then:
 \[
  \alpha = \eta_\zeta \eta_{-\zeta} \eta_\xi \gamma.
 \]
 By proposition~\ref{prop:liepol},
  \(\eta_{-\zeta} \eta_\xi\)
 is a polynomial loop in \(U_n\).
 Since \(\alpha\) and \(\eta_\zeta\) both take values in \(G\),
  \(\eta_{-\zeta} \eta_\xi \gamma\)
 must also take values in \(G\).
 It thus lies in \(L G \cap L_\pol U_n\) which is, by definition, \(L_\pol G\).
 Therefore \(\alpha\) is of the form \(\eta_\zeta \beta\) with \(\zeta \in \mf{g}\) and \(\beta \in L_\pol G\).
 Hence \(\alpha \in P_\pol G\).
\end{proof}

\subsubsection{The Unitary Group}

In the case of \(U_n\), there are local sections of the form \(g \to \eta_\xi\) where \(\exp(\xi) = g\).
This will follow from lemma~\ref{lem:comlie}.

\begin{proposition}
 \label{prop:unloctriv}
 The space \(P_\pol U_n\) is locally trivial over \(U_n\).
\end{proposition}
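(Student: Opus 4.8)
The plan is to exhibit explicit local sections of $P_\pol U_n \to U_n$ of the form $g \mapsto \eta_\xi$, where $\exp(\xi) = g$, with $\xi$ chosen to depend smoothly on $g$ in a neighbourhood of any given point. Since $\eta_\xi \in P_\pol U_n$ always lies in the fibre over $\exp(\xi) = g$, the only issue is to produce the smooth choice of logarithm $\xi$. Fix $g_0 \in U_n$. The first step is to diagonalise: write $g_0 = u_0 \, \mathrm{diag}(e^{i\theta_1},\dots,e^{i\theta_n})\, u_0^{-1}$ and pick real numbers $\theta_j$, then set $\xi_0 = u_0\,\mathrm{diag}(i\theta_1,\dots,i\theta_n)\,u_0^{-1}$, so $\exp(\xi_0)=g_0$. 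By Lemma~\ref{lem:comlie} we may and do choose $\xi_0 \in \mf{z}(g_0)$, i.e. in the Lie algebra of the central torus $Z(g_0)$ of the centraliser $C(g_0)$.

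The second step is to extend $\xi_0$ to a local section. The standard device is a holomorphic functional calculus / contour-integral logarithm: choose a branch cut avoiding the eigenvalues of $g_0$, say remove a ray $R$ from the origin through a point of the unit circle not in the spectrum of $g_0$, and on the open set $U = \{ g \in U_n : \mathrm{spec}(g) \cap R = \emptyset\}$ define
\[
  \xi(g) = \frac{1}{2\pi i} \oint_{\Gamma} \log(\lambda)\,(\lambda - g)^{-1}\, d\lambda,
\]
where $\Gamma$ is a fixed contour in $\C \setminus R$ enclosing the unit circle and $\log$ is the branch of the logarithm holomorphic off $R$. This is a well-defined smooth (indeed real-analytic) $\mf{u}_n$-valued function on the open neighbourhood $U$ of $g_0$, it satisfies $\exp(\xi(g)) = g$ for all $g \in U$, and $\xi(g_0)$ agrees with the logarithm coming from the chosen branch. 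A small adjustment of the branch cut makes $\xi(g_0) = \xi_0$. Then $s(g) \coloneqq \eta_{\xi(g)}$ is a continuous (smooth) map $U \to P_\pol U_n$ with $s(g)$ in the fibre over $g$, hence a local section; combined with the free transitive action of $L_\pol U_n$ on fibres (from the previous proposition) this gives the local trivialisation $U \times L_\pol U_n \xrightarrow{\sim} P_\pol U_n|_U$, $(g,\gamma) \mapsto \eta_{\xi(g)}\gamma$.

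I would expect the main obstacle to be verifying that $g \mapsto \eta_{\xi(g)}$ is continuous \emph{into the direct-limit topology} on $P_\pol U_n$ (equivalently, on $L_\pol U_n$), not merely pointwise or into the smooth-loop topology: one must check that, locally in $g$, the paths $\eta_{\xi(g)}$ land in a fixed stratum $P_{\pol,N}U_n$ (bounded Laurent degree) and vary continuously there. This follows because on the compact set $\overline U' \subset U$ the eigenvalues of $g$, hence the integers $k_j = \theta_j(g)/2\pi$ appearing in the diagonal form $\mathrm{diag}(z^{k_1},\dots)$ after conjugation (cf.\ the proof of Proposition~\ref{prop:liepol}), are locally constant—more precisely the branch-cut logarithm keeps $\theta_j(g)$ in a bounded interval, so the Laurent degrees are uniformly bounded on $\overline U'$—and on that bounded-degree stratum the direct-limit topology agrees with a finite-dimensional one, where smoothness of $\xi$ gives smoothness of $\eta_{\xi(g)}$. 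Once this bounded-degree continuity is established the remaining assertions are routine.
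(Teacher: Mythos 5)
Your proposal is correct and is essentially the paper's own argument: the paper fixes \(s \in i\R\), takes the open set \(V_s\) of unitaries not having \(-e^s\) as an eigenvalue, defines the branch \(\log_s\) of the logarithm spectrally (eigenspace by eigenspace, which is exactly the operator your contour-integral formula produces), and uses \(g \mapsto \eta_{\log_s g}\), i.e.\ \(\alpha_s(g)(t) = \exp(t \log_s g)\), as the local section. Your closing concern about landing in a bounded Laurent-degree stratum is moot for the section itself, since its polynomial-loop factor is the constant loop (it matters only for transition functions, which the paper likewise leaves implicit), so the two arguments coincide.
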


\begin{proof}
 Let \(s \in i \R\).
 Let \(V_s \subseteq U_n\) be the open subset consisting of those operators which do not have \(-e^s\) as an eigenvalue.
 Let
  \(\mf{v}_s \subseteq \mf{u}_n\)
 be the open subset consisting of those operators which have eigenvalue in the interval
  \((s - i \pi, s + i \pi)\).
 The exponential map restricts to a diffeomorphism
  \(\exp \colon \mf{v}_s \to V_s\).
 Let
  \(\log_s \colon V_s \to \mf{v}_s\)
 be its inverse.

 For a direct construction, define the \(s\)\enhyp{}logarithm
  \(\log_s \colon \T \ssetminus \{-e^s\} \to (s - i \pi, s + i \pi)\)
 as the inverse of the exponential map on this domain (note that this coincides with the above definition putting \(n = 1\)).
 Let \(g \in V_s\).
 Let
  \(E_1 \oplus \dotsb \oplus E_l\)
 be the orthogonal decomposition of \(\C^n\) into the eigenspaces of \(g\) with eigenvalues
  \(\lambda_1, \dotsc, \lambda_l\).
 Then \(\log_s g\) is the operator which acts on \(E_j\) by multiplication by \(\log_s \lambda_j\).

 It is a simple exercise to show that \(\log_s g \in Z(g)\) for any \(g\) and \(s\) such that \(\log_s g\) is defined, that \(\log_s g\) is locally constant in \(s\), and that
  \(V_{s + 2 \pi i} = V_s\)
 and
  \(\mf{v}_{s + 2 \pi i} = \mf{v}_{s} + 2 \pi i 1_n\).

 The local sections of \(P_\pol U_n \to U_n\) are
  \(\alpha_s \colon V_s \to P_\pol U_n\)
 given by
  \(\alpha_s(g)(t) = \exp(t \log_s g)\).
\end{proof}

\subsubsection{The Special Unitary Group}

The method of the previous section works in \(U_n\) because every point in \(U_n\) is \emph{exp\hyp{}regular}; that is, is the image of a point in \(\mf{u}_n\) such that the exponential map is a diffeomorphism is a neighbourhood of that point.
This is not true for a general Lie group.
It is straightforward to show that the preimage of \(-1 \in S U_2\) under
 \(\exp \colon \mf{s u}_2 \to S U_2\)
is a countable number of copies of \(\CP^1\), hence \(-1 \in S U_2\) is not exp\hyp{}regular.

However, we can still prove that
 \(P_\pol S U_n \to S U_n\)
is locally trivial.
The strategy is to use the fact that there \emph{is} a point in \(\mf{u}_n\) around which the exponential map is a local diffeomorphism, and then use the fact that
 \(S U_n \to U_n \to S^1\)
is split.

\begin{proposition}
 The map
  \(P_\pol S U_n \to S U_n\)
 is locally trivial.
\end{proposition}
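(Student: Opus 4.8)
The plan is to produce local sections of \(P_\pol S U_n \to S U_n\) by modifying the sections \(\alpha_s\) of \(P_\pol U_n \to U_n\) from Proposition~\ref{prop:unloctriv}, dividing out the determinant. Recall that \(\alpha_s(g)(t) = \exp(t \log_s g)\), where \(\log_s \colon V_s \to \mf{v}_s\) inverts \(\exp\) and \(V_s \subseteq U_n\) is the open set of operators not having \(-e^s\) as an eigenvalue. Given \(g_0 \in S U_n\), I would first choose \(s \in i \R\) with \(-e^s\) not an eigenvalue of \(g_0\) (possible as \(g_0\) has at most \(n\) eigenvalues), so that \(g_0 \in V_s\) and \(V_s \cap S U_n\) is an open neighbourhood of \(g_0\) in \(S U_n\).

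The key observation is an integrality statement: for \(g \in V_s \cap S U_n\) we have \(\exp(\operatorname{tr}(\log_s g)) = \det(\exp(\log_s g)) = \det g = 1\), so \(\operatorname{tr}(\log_s g) \in 2 \pi i \Z\); since \(g \mapsto \operatorname{tr}(\log_s g)\) is continuous and \(2 \pi i \Z\)-valued it is locally constant, hence equal to a fixed \(2 \pi i k\), \(k \in \Z\), on a small enough neighbourhood \(W\) of \(g_0\). Using the splitting \(z \mapsto \operatorname{diag}(z, 1, \dotsc, 1)\) of \(\det \colon U_n \to S^1\), I would set \(c(t) = \operatorname{diag}(e^{2 \pi i k t}, 1, \dotsc, 1)\), a polynomial loop in \(U_n\), and consider the assignment

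\[
  \sigma_s(g)(t) = \alpha_s(g)(t)\, c(t)^{-1} = \exp(t \log_s g) \operatorname{diag}(e^{-2 \pi i k t}, 1, \dotsc, 1).
\]

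The verification that \(\sigma_s\) is a local section of \(P_\pol S U_n \to S U_n\) splits into three routine checks. First, \(\sigma_s(g)\) is \(S U_n\)-valued: \(\det \sigma_s(g)(t) = e^{t \operatorname{tr}(\log_s g)} e^{-2 \pi i k t} = 1\). Second, \(\sigma_s(g) \in P_\per S U_n\) and projects to \(g\): since \(c\) is a loop, \(\sigma_s(g)(t + 1) = \exp(\log_s g) \exp(t \log_s g) c(t)^{-1} = g\, \sigma_s(g)(t)\), while \(\sigma_s(g)(1) \sigma_s(g)(0)^{-1} = g\) because \(c(0) = c(1) = 1_n\). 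Third, \(\sigma_s(g) \in P_\pol S U_n\): writing \(\sigma_s(g) = \eta_{\log_s g}\, c^{-1}\) with \(c^{-1} \in L_\pol U_n\) and using that \(P_\pol U_n\) is stable under the right action of \(L_\pol U_n\) gives \(\sigma_s(g) \in P_\pol U_n\), whence \(\sigma_s(g) \in P_\per S U_n \cap P_\pol U_n = P_\pol S U_n\) by Lemma~\ref{lem:polsub}. Smoothness of \(g \mapsto \sigma_s(g)\) is inherited from that of \(\alpha_s\), as \(c\) does not depend on \(g\). Since \(g_0\) was arbitrary, this yields local triviality.

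The one point needing care is the third check: the correction by \(c^{-1}\) destroys the manifest presentation of \(\sigma_s(g)\) in the form \(\eta_\xi \gamma\) with \(\xi \in \mf{su}_n\), so membership in \(P_\pol S U_n\) cannot be read off from the definition, and Lemma~\ref{lem:polsub} is precisely the tool that recovers it. This is also exactly where the hypothesis that \(S U_n \to U_n \to S^1\) is split enters: the splitting is what lets us correct the determinant of \(\alpha_s(g)\) by a \emph{polynomial} loop.
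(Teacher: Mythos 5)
Your proposal is correct and follows essentially the same route as the paper: correct the \(U_n\)-sections \(\alpha_s\) by a determinant-killing polynomial loop obtained from the splitting of \(\det \colon U_n \to \T\), then conclude membership in \(P_\pol S U_n\) via Lemma~\ref{lem:polsub}. The only cosmetic difference is that the paper writes the correction uniformly as \(\sigma\big(\det(\alpha_s(g)(-t))\big)\), so it depends on \(g\) and needs no explicit appeal to local constancy of \(k\), whereas you fix \(k\) on a smaller neighbourhood; both amount to multiplying by the loop \(z^{-k}\) embedded along a chosen line.
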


\begin{proof}
 Choose a unit vector \(v \in \C^n\).
 Define the representation
  \(\sigma \colon \T \to U_n\)
 by
  \(\sigma(\lambda)v = \lambda v\)
 and \(\sigma(\lambda)\) is the identity on
  \(\langle v \rangle^\bot\).

 Let \(s \in i \R\).
 Let \(V_s \subseteq U_n\) and
  \(\mf{v}_s \subseteq \mf{u}_n\)
 be as in the proof of proposition~\ref{prop:unloctriv}.
 Let
  \(\alpha_s \colon V_s \to P_\pol U_n\)
 be the local section defined in that proposition.

 Define
  \(\beta_s \colon V_s \cap S U_n \to P_\per U_n\)
 by:
 \[
  \beta_s(g)(t) = \alpha_s(g)(t) \, \sigma\Big(\det\big(\alpha_s(g)(-t)\big)\Big).
 \]

 Recall that
  \(\det \exp(\xi) = e^{\tr \xi}\).
 Thus for
  \(g \in V_S \cap S U_n\):
 \[
  \det \alpha_s(g)(-t) = e^{\tr( -t \log_s(g))} = e^{-t \tr \log_s(g)}.
 \]
 As \(g \in S U_n\),
  \(e^{\tr \log_s(g)} = \det g = 1\)
 so
  \(\tr \log_s(g) = 2 \pi i k\)
 for some \(k \in \Z\).
 Thus
  \(t \to e^{-t \tr \log_s(g)}\)
 is the map \(t \to z^{-k}\).
 Hence
  \(\sigma( \det \alpha_s(g)(-t))\)
 is a polynomial loop in \(U_n\).
 Thus
  \(\beta_s(g)(t) \in P_\pol U_n\).

 Then as
  \(\det \circ \sigma \colon \T \to \T\)
 is the identity,
  \(\det \beta_s(g)(t) = 1\)
 for all \(g, t\).
 Hence
  \(\beta_s(g)(t) \in S U_n\)
 for all \(g, t\).
 Thus by lemma~\ref{lem:polsub},
  \(\beta_s(g) \in P_\per S U_n \cap P_\pol U_n = P_\pol S U_n\).
\end{proof}

\subsubsection{The Special Orthogonal Group}
\label{sec:son}

The situation for \(S O_n\) is more complicated still.
The problem here is with eigenvalue \(-1\).
It can be shown that \(g \in S O_n\) is \emph{exp\hyp{}regular} if and only if its \(-1\)\enhyp{}eigenspace has dimension at most \(2\).
The solution comes from the theory of \emph{unitary structures} which we now describe.

\begin{defn}
 Let \(E\) be a real vector space with an inner product.
 A \emph{unitary structure} on \(E\) is an orthogonal map \(J \colon E \to E\) such that \(J^2 = -1\).
\end{defn}

\begin{proposition}
 \label{prop:cplxstr}
 Let \(E\) be a real even dimensional vector space with an inner product.
 The properties of unitary structures that we shall need are:

 \begin{enumerate}
 \item
   \(E\) admits a unitary structure.

  \label{it:cplxeven}

 \item
  The set of unitary structures on \(E\) is
   \(O(E) \cap \mf{o}(E)\) (in \(\End(E)\)).

  \label{it:cplxorth}

 \item
  Let \(J\) be a unitary structure on \(E\).
  Then
   \(\exp(\pi J) = -1_E\).

  \label{it:cplxexp}

 \item
  Let \(J_1, J_2\) be unitary structures on \(E\).
  Then:
   \( \eta_{-\pi J_1} \eta_{\pi J_2} \)
  is a polynomial loop in \(S O(E)\).

  \label{it:cplxpol}

 \item
  Let \(\xi \in \mf{s o}(E)\) be such that \(\xi\) does not have \(0\) as an eigenvalue.
  Then there is a natural unitary structure \(J_\xi\) on \(E\) which varies smoothly in \(\xi\).
  Considered as an element of \(\mf{s o}(E)\), \(J_\xi\) satisfies \(\lb \xi, J_\xi \rb = 0\).
  The assignment \(\xi \to J_\xi\) satisfies \(J_J = J\) (here \(J\) is considered as an element of \(\mf{s o}(E)\)), and
   \(J_{\xi + c J_\xi} = J_\xi\)
  for \(c > 0\).

  \label{it:cplxalg}

 \item
  Let \(g \in S O(E)\) be such that \(1\) is not an eigenvalue of \(g\).
  Then \(\log_0(-g)\) is of the form \(\xi - \pi J_\xi\) for some \(\xi \in \mf{s o}(E)\) with \(\exp(\xi) = g\).

  \label{it:cplxdecomp}
 \end{enumerate}
\end{proposition}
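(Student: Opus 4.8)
The plan is to prove the six items in the order listed, since each leans on its predecessors; only item~\ref{it:cplxalg} requires a genuine construction, the rest being elementary linear algebra or a direct appeal to proposition~\ref{prop:liepol}. For item~\ref{it:cplxeven} I would decompose \(E\) into \(2\)-dimensional orthogonal blocks and put a rotation by \(\pi/2\) on each. For item~\ref{it:cplxorth}: if \(J\) is a unitary structure then \(J^\top J = 1_E\) and \(J^2 = -1_E\) together force \(J^\top = J^{-1} = -J\), so \(J \in O(E) \cap \mf{o}(E)\); conversely \(J^\top = J^{-1}\) and \(J^\top = -J\) give \(J^2 = -1_E\). For item~\ref{it:cplxexp} I would expand the power series for \(\exp(\pi J)\), separate even and odd powers, and use \(J^2 = -1_E\) to obtain the Euler-type identity \(\exp(\pi J) = (\cos\pi)1_E + (\sin\pi)J = -1_E\); this is the same manipulation as the \(\cos_r D_s\) computation in section~\ref{sec:linear}. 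Item~\ref{it:cplxpol} is then immediate: by items~\ref{it:cplxorth} and~\ref{it:cplxexp} we have \(\pi J_1, \pi J_2 \in \mf{s o}(E)\) with \(\exp(\pi J_1) = -1_E = \exp(\pi J_2)\), so \(\eta_{-\pi J_1}\eta_{\pi J_2}\) is precisely of the form covered by proposition~\ref{prop:liepol} applied to \(G = S O(E)\), hence a polynomial loop in \(S O(E)\).

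Item~\ref{it:cplxalg} is the crux. For \(\xi \in \mf{s o}(E)\) with no zero eigenvalue, \(-\xi^2 = \xi^\top\xi\) is positive definite, so it has a unique positive-definite square root \(\abs{\xi} \coloneqq \sqrt{-\xi^2}\), which is a real-analytic function of \(\xi\) (holomorphic functional calculus) and commutes with \(\xi\). I would set \(J_\xi \coloneqq \xi\abs{\xi}^{-1}\) and check: \(J_\xi^2 = \xi^2\abs{\xi}^{-2} = -1_E\) and \(J_\xi^\top = -\abs{\xi}^{-1}\xi = -J_\xi\), so \(J_\xi\) is a unitary structure by item~\ref{it:cplxorth}; \([\xi, J_\xi] = 0\) is clear; smoothness in \(\xi\) follows from smoothness of \(\xi \mapsto -\xi^2\), of the positive square root, and of inversion; and \(\xi = J\) gives \(\abs{\xi} = 1_E\), hence \(J_J = J\). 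Finally, for \(c > 0\), using \(\xi J_\xi = J_\xi\xi = \xi^2\abs{\xi}^{-1} = -\abs{\xi}\) one computes \(-(\xi + cJ_\xi)^2 = (\abs{\xi} + c)^2\), so \(\abs{\xi + cJ_\xi} = \abs{\xi} + c\) (positive definite since \(c > 0\)), and since \(\xi + cJ_\xi = J_\xi(\abs{\xi} + c)\) this gives \(J_{\xi + cJ_\xi} = J_\xi\).

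For item~\ref{it:cplxdecomp}, put \(\zeta \coloneqq \log_0(-g)\): this is defined since \(g\) has no eigenvalue \(1\), so \(-g\) has no eigenvalue \(-1\), and as \(-g\) is real orthogonal \(\zeta \in \mf{s o}(E)\) with \(\exp(\zeta) = -g\) and eigenvalues in \((-i\pi, i\pi)\). If \(g\) has no \(-1\) eigenvalue then \(\zeta\) has no zero eigenvalue, so \(\xi \coloneqq \zeta + \pi J_\zeta\) is legitimate, and by item~\ref{it:cplxalg} (with \(c = \pi\)) \(J_\xi = J_\zeta\), whence \(\xi - \pi J_\xi = \zeta\); moreover \(\xi\) has no zero eigenvalue and \(\exp(\xi) = \exp(\zeta)\exp(\pi J_\zeta) = (-g)(-1_E) = g\) by item~\ref{it:cplxexp}. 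If \(g\) does have \(-1\) as an eigenvalue, let \(W \coloneqq \ker(g + 1_E)\); since \(\det g = 1\), \(g\) has no \(+1\) eigenvalue, and the restriction of \(g\) to \(W^\perp\) has determinant \(1\), \(\dim W\) is even, so \(W\) admits a unitary structure \(J_0\). Take \(J \coloneqq J_\zeta\) on \(W^\perp\) and \(J \coloneqq J_0\) on \(W\), and \(\xi \coloneqq \zeta + \pi J\). Since \(\zeta\) vanishes on \(W\) and \(J_\zeta\) commutes with \(\zeta\) on \(W^\perp\), \(J\) commutes with \(\zeta\); also \(\xi\) preserves \(W\) and \(W^\perp\), so \(J_\xi = \xi\abs{\xi}^{-1}\) may be computed block-wise, equalling \(J_0 = J\) on \(W\) (where \(\xi = \pi J_0\)) and \(J_\zeta = J\) on \(W^\perp\) (by item~\ref{it:cplxalg}); thus \(J_\xi = J\). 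As before \(\exp(\xi) = \exp(\zeta)\exp(\pi J) = g\), so \(\log_0(-g) = \zeta = \xi - \pi J_\xi\).

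The hard part throughout is item~\ref{it:cplxalg}: making the functional-calculus square root behave (positive-definiteness of \(-\xi^2\), uniqueness of the positive square root, smoothness in \(\xi\)) and verifying the identity \(J_{\xi + cJ_\xi} = J_\xi\). A secondary obstacle is the degeneration of \(\log_0(-g)\) when \(g\) has \(-1\) in its spectrum, which forces the case split in item~\ref{it:cplxdecomp} and a non-canonical choice of unitary structure on \(\ker(g + 1_E)\).
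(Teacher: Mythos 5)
Your proposal is correct, and its overall skeleton (items 1--4 elementary, item 4 via proposition~\ref{prop:liepol}, item 6 by splitting off the \(-1\)\enhyp{}eigenspace and choosing an auxiliary unitary structure there) matches the paper's proof; the genuine divergence is in item~\ref{it:cplxalg}, and to a trivial extent item~\ref{it:cplxexp} (power series versus diagonalisation over \(\C\)). The paper constructs \(J_\xi\) by complexifying, taking \(W \subseteq E \otimes \C\) to be the sum of the eigenspaces of \(\xi\) with eigenvalue \(is\), \(s > 0\), and declaring \(J_\xi\) to be \(i\) on \(W\) and \(-i\) on \(\conj{W} = W^\bot\); commutation comes from simultaneous diagonalisation, and \(J_J = J\) and \(J_{\xi + c J_\xi} = J_\xi\) come from the observation that \(J_\zeta = J_\xi\) whenever \(\zeta\) and \(\xi\) are simultaneously diagonalisable with eigenvalues of matching sign on the imaginary axis. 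Your \(J_\xi = \xi(-\xi^2)^{-1/2}\) is exactly the same operator (on an \(is\)\enhyp{}eigenspace it acts by \(i\,\sign(s)\)), but the polar/functional\hyp{}calculus formulation has a real advantage: smooth dependence on \(\xi\) reduces to smoothness of the positive square root and of inversion, whereas the paper's ``\(J_\xi\) varies smoothly because it is defined from the eigenspaces'' is slightly delicate as stated (individual eigenspaces do not vary smoothly; it is the spectral projection onto \(W\) that does), and your identities \(J_J = J\), \(J_{\xi + cJ_\xi} = J_\xi\) become one\hyp{}line algebra via \(\abs{\xi + cJ_\xi} = \abs{\xi} + c\). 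What the paper's eigenspace picture buys in exchange is the more general ``parity'' invariance statement, which it then reuses directly in item 6; your case split there (on whether \(-1\) occurs in the spectrum of \(g\)) is harmless but unnecessary, since the paper's single argument covers both cases with \(F\) possibly zero.
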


In the last property we use the inclusion
 \(S O(E) \to U(E \otimes \C)\)
to define
 \(\log_0 \colon S O(E) \cap V_0 \to \mf{u}(E)\).
Since \(\log_0\) commutes with complex conjugation%
\footnote{%
 It is the only one of the logarithms that we have defined with this property.%
}%
, the image of \(S O(E) \cap V_0\) lies in \(\mf{s o}(E)\).

\begin{proof}
 Property~\ref{it:cplxeven} is a standard property of complex structures whilst~\ref{it:cplxorth} is a simple deduction from the definition of a unitary structure.
 Therefore we start with property~\ref{it:cplxexp}.

 \begin{enumerate}
  \addtocounter{enumi}{2}
 \item
  As an element of
   \(\mf{o}(E) = \mf{s o}(E)\),
   \(J\) is diagonalisable over \C. Since \(J^2 = -1\), its eigenvalues are \(\pm i\).
  Thus \(\pi J\) has eigenvalues \(\pm \pi i\).
  Hence \(\exp(\pi J)\) has sole eigenvalue \(-1\).
  As
   \(\exp(\pi J) \in S O(E)\),
  it is diagonalisable over \C and thus is \(-1_E\).

 \item
  This is a corollary of proposition~\ref{prop:liepol} together with the previous property.

 \item
  Diagonalise \(\xi\) over \C. As \(\xi\) is a real operator, its eigenvalues and corresponding eigenspaces come in conjugate pairs.
  As \(\xi\) is skew\hyp{}adjoint, its eigenvalues lie on the imaginary axis in \C. Let
   \(W \subseteq E \otimes \C\)
  be the sum of the eigenspaces of \(\xi\) corresponding to eigenvalues of the form \(i s\) with \(s > 0\).
  Then \(\conj{W}\), resp.\ \(W^\bot\), is the sum of the eigenspaces of \(\xi\) corresponding to eigenvalues of the form \(i s\) with \(s < 0\), resp.\ \(s \le 0\).
  The assumption on \(\xi\) implies that
   \(\conj{W} = W^\bot\).
  Define \(J_\xi\) on \(E \otimes \C\) to be the operator with eigenspaces \(W\) and \(\conj{W}\) with respective eigenvalues \(i\) and \(-i\).
  By construction, \(J^2 = -1\) and \(J^* J = 1\).
  As the eigenspaces and eigenvalues of \(J\) come in conjugate pairs, \(J\) is a real operator and thus is a unitary structure.

  Since \(J_\xi\) is defined from the eigenspaces of \(\xi\), it varies smoothly in \(\xi\).
  Moreover, as the eigenspaces of \(J_\xi\) decompose as eigenspaces of \(\xi\), \(J_\xi\) and \(\xi\) are simultaneously diagonalisable over \C. Hence \(\lb \xi, J_\xi \rb = 0\).

  It is clear from the construction that if \(\zeta\) and \(\xi\) can be simultaneously diagonalised and the eigenvalues of \(\zeta\) have the same parity on the imaginary axis as the corresponding ones of \(\xi\) then \(J_\zeta = J_\xi\).
  In particular, \(J_J = J\) and
   \(J_{\xi + c J_\xi} = J_\xi\)
  for \(c > 0\).

 \item
  Let \(F\) be the \(-1\)\enhyp{}eigenspace of \(g\).
  Then \(E\) decomposes \(g\)\enhyp{}invariantly as \(F \oplus F^\bot\).
  As \(g\) does not have \(1\) as an eigenvalue, \(\log_0(-g)\) is well\hyp{}defined.
  Since the decomposition of \(E\) is \(-g\)\enhyp{}invariant:
  \[
   \log_0(-g) = \log_0(-g \restrict_{F}) + \log_0(-g \restrict_{F^\bot}) = \log_0(-g \restrict_{F^\bot}).
  \]
  This last step is because \(-g \restrict_F = 1_F\)
  so
   \(\log_0(-g \restrict_F) = 0_F\).

  Let
   \(\xi_{F^\bot} = \log_0(-g \restrict_{F^\bot})\).
  As \(-g\) does not have \(1\) as an eigenvalue on \(F^\bot\), \(\xi_{F^\bot}\) does not have \(0\) as an eigenvalue.
  Let \(J_{F^\bot}\) be the corresponding unitary structure.
  As
   \(\lb \xi_{F^\bot}, J_{F^\bot} \rb = 0\),
  \[
   \exp(\xi_{F^\bot} + \pi J_{F^\bot}) = \exp(\xi_{F^\bot}) \exp( \pi J_{F^\bot}) = (-g) \restrict_{F^\bot} (-1_{F^\bot}) = g \restrict_{F^\bot}.
  \]

  As \(g \in S O(E)\), \(F\) must be of even dimension.
  Choose a unitary structure \(J_F\) on \(F\).
  Then
   \(\exp(\pi J_F) = -1_F = g \restrict_F\).
  Let
   \(\xi = \pi J_F + \xi_{F^\bot} + \pi J_{F^\bot}\).
  Then:
  \[
   \exp(\xi) = \exp(\pi J_F) + \exp(\xi_{F^\bot} + \pi J_{F^\bot}) = -1_F + g \restrict_{F^\bot} = g.
  \]
  Then
   \(J_\xi = J_F + J_{F^\bot}\)
  so
   \(\xi - \pi J_\xi = \xi_{F^\bot}\),
  whence
   \(\xi - \pi J_\xi = \log_0 (-g)\).\qedhere
 \end{enumerate}
\end{proof}

\begin{theorem}
 \label{th:lctrso}
 The map
  \(P_\pol S O_n \to S O_n\)
 is locally trivial.
\end{theorem}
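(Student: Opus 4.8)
The plan is to exhibit, for every $g_0 \in S O_n$, an explicit smooth section of $P_\pol S O_n \to S O_n$ over a neighbourhood of $g_0$; since the fibres have already been identified with $L_\pol S O_n$, this suffices. As for $U_n$ and $S U_n$, the only genuine obstruction is the eigenvalue $-1$, and the tool for handling it is Proposition~\ref{prop:cplxstr}. If $-1$ is not an eigenvalue of $g_0$, then on a neighbourhood of $g_0$ the assignment $g \mapsto \eta_{\log_0 g}$ is already a section, because $\log_0$ carries $V_0 \cap S O_n$ into $\mf{s o}_n$ (it commutes with complex conjugation). So the whole content is the case $-1 \in$ eigenvalues of $g_0$, where $g_0 \notin V_0$ and no such formula is available; in particular, when $\dim\ker(g_0+1)\ge 4$ the point $g_0$ is not exp-regular.

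First I would replace $\log_0$ by a symmetrised version of a nearby logarithm. Fix $s_0 = i\sigma_0$ with $\sigma_0 > 0$ so small that $-e^{s_0}$ is not an eigenvalue of $g_0$ and that $-1$ is the only eigenvalue of $g_0$ within angular distance $\sigma_0$ of $-1$; write $2m$ for the dimension of $\ker(g_0+1)$, which is even since $\det g_0 = 1$. On the open set of $g \in S O_n$ not having $-e^{\pm s_0}$ as an eigenvalue — the two conditions coincide because the eigenvalues of $g$ are conjugation-stable — the operators $\log_{s_0} g$ and $\log_{-s_0} g = \overline{\log_{s_0} g}$ are both defined, and being holomorphic functions of $g$ they commute, so
\[
 \zeta_g := \tfrac12\big( \log_{s_0} g + \overline{\log_{s_0} g} \big) \in \mf{s o}_n
\]
depends smoothly on $g$. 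An elementary computation comparing the two branches of the logarithm shows $\exp \zeta_g = g R_g$, where $R_g$ is $-1$ on the spectral subspace $F_g \subseteq \R^n$ of $g$ for eigenvalues within $\sigma_0$ of $-1$ and $+1$ on $F_g^\bot$; for $g$ in a small enough neighbourhood of $g_0$ this $F_g$ varies smoothly and has constant dimension $2m$.

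Next I would correct the base point. Transporting a fixed unitary structure on $\ker(g_0+1)$, choose a smooth family of unitary structures $J_g$ on $F_g$ and set $\tilde J_g := J_g \oplus 0_{F_g^\bot} \in \mf{s o}_n$, so that $\exp(\pi \tilde J_g) = R_g$ by property~\ref{it:cplxexp}. I claim $s(g)(t) := \exp(t\zeta_g)\exp(t\pi\tilde J_g)$ is the required section: it is manifestly smooth in $(g,t)$ and takes values in $S O_n$. To see $s(g) \in P_\pol S O_n$, pick any unitary structure $J'$ on $F_g$ commuting with $\zeta_g|_{F_g}$ — one exists by property~\ref{it:cplxalg} applied to $\zeta_g|_{F_g}$ on its kernel and the orthogonal complement, and it need not vary continuously in $g$ — and verify, using $\exp(\pi\tilde J_g)=R_g$, $[\zeta_g|_{F_g},J']=0$ and $\exp(\pi J')=\exp(\pi J_g)=-1_{F_g}$, that
\[
 s(g) = \eta_{\xi(g)}\,\gamma_g, \qquad \xi(g) := \big(\zeta_g|_{F_g}+\pi J'\big)\oplus\zeta_g|_{F_g^\bot}\in\mf{s o}_n, \qquad \gamma_g := \big(\eta_{-\pi J'}\eta_{\pi J_g}\big)\oplus 1_{F_g^\bot}.
\]
By property~\ref{it:cplxorth}, $\xi(g)\in\mf{s o}_n$; by property~\ref{it:cplxpol}, $\gamma_g$ is a polynomial loop in $S O(F_g)\subseteq S O_n$, so $s(g)$ has exactly the shape demanded in Definition~\ref{def:perpol}. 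Its base point is $\exp\xi(g)=g$, since $\exp\zeta_g = gR_g$ and the factor $\exp(\pi J')=-1_{F_g}$ cancels $R_g$ on $F_g$. Thus $s$ is a smooth local section over a neighbourhood of $g_0$, and $g_0$ was arbitrary. Note that $\xi(g)$ and $\gamma_g$ are \emph{not} continuous in $g$ — only their product $s(g)$ is — exactly the phenomenon already present in the $S U_n$ argument.

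The hard part is the middle step: producing a smooth family $\zeta_g\in\mf{s o}_n$ whose exponential is $g$ up to a reflection supported on a smoothly varying family of subspaces. Near a $g_0$ with $\dim\ker(g_0+1)\ge 4$ there is no smooth $g\mapsto\xi(g)$ with $\exp\xi(g)=g$ at all — the exponential of $S O_n$ fails to be a submersion there, just as at $-1\in S U_2$ above — so the polynomial-loop factor $\gamma_g$, invisible in $s(g)$ but unavoidable in any factorisation $\eta_\xi\gamma$, is genuinely needed. The symmetrised logarithm supplies the smooth $\zeta_g$ precisely because $\log_{s_0}$ and $\log_{-s_0}$ act on the same eigenspaces and hence commute, and property~\ref{it:cplxpol} is exactly what lets the residual reflection be absorbed into a polynomial loop rather than forcing us out of $P_\pol$.
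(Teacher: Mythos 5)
Your proposal is correct and is essentially the paper's own argument: your symmetrised logarithm \(\zeta_g = \tfrac12\bigl(\log_{s_0} g + \overline{\log_{s_0} g}\bigr)\) is exactly \(\log_0\bigl(g\exp(-\pi\tilde J_g)\bigr)\), so your section \(s(g)(t)=\exp(t\zeta_g)\exp(t\pi\tilde J_g)\) coincides with the paper's \(\beta_{r,g}\), with your spectral cut at angular distance \(\sigma_0\) from \(-1\) playing the role of the paper's cut at real part \(r\) and your transported \(J_g\) playing the role of its smooth family \(J_h\). The only cosmetic difference is in verifying membership of \(P_\pol S O_n\): you construct the auxiliary unitary structure \(J'\) by hand from properties~\ref{it:cplxalg} and~\ref{it:cplxpol}, whereas the paper invokes property~\ref{it:cplxdecomp}, which packages precisely that argument.
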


\begin{proof}
 We first describe a family of open sets which cover \(S O_n\).
 These will be the domains of the sections of \(P_\pol S O_n\).
 The family is indexed by the interval \(\lb -1,1 \rb\) and by elements of \(S O_n\).

 Let \(r \in \lb -1,1 \rb\).
 Let \(W_r\) be the open subset of \(S O_n\) consisting of those \(g\) such that no eigenvalue of \(g\) (over \C) has real part \(r\).
 For \(g \in W_r\) there is a \(g\)\enhyp{}invariant orthogonal decomposition of \(\R^n\) as
  \(E_{-1}^r(g) \oplus E_r^1(g)\)
 where the eigenvalues (over \C) of \(g\) on \(E_{-1}^r(g)\) have real part in the interval \(\lb -1,r \rb\) and on \(E_r^1(g)\) in the interval \(\lb r, 1 \rb\).
 Note that \(g\) cannot have eigenvalue \(1\) on \(E_{-1}^r(g)\), even if \(r = 1\), so as \(g \in S O_n\), \(E_{-1}^r(g)\) must have even dimension.

 Over each \(W_r\) is a vector bundle with fibre \(E_{-1}^r(g)\) at \(g\) (this will have different dimension on the different components of \(W_r\)).
 Over most \(W_r\)'s this bundle is not trivial.
 Therefore we find smaller open sets over which we can trivialise it.

 Let \(r \in \lb -1,1 \rb\) and \(g \in W_r\).
 Define \(W_r(g)\) to be the open subset of \(S O_n\) consisting of those \(h \in W_r\) for which the orthogonal projection
  \(E_{-1}^r(h) \to E_{-1}^r(g)\)
 is an isomorphism.

 Over \(W_r(g)\), therefore, the aforementioned vector bundle is trivial and of constant even dimension.
 Hence, we can choose a unitary structure \(J_h\) on each \(E_{-1}^r(h)\) which varies smoothly in \(h\).

 Extend \(J_h\) to a skew\hyp{}adjoint operator on \(\R^n\) by defining it to be zero on \(E_r^1(h)\).
 Let
  \(\epsilon(h) = h \exp(-\pi J_h) \in S O_n\).
 Then \(\epsilon(h)\) agrees with \(h\) on \(E_r^1(h)\) and is \(-h\) on \(E_{-1}^r(h)\).
 Since \(h\) does not have eigenvalue \(-1\) on \(E_r^1(h)\) and does not have eigenvalue \(1\) on \(E_{-1}^r(h)\), \(\epsilon(h)\) does not have eigenvalue \(-1\) on \(\R^n\) and so lies in the domain of \(\log_0\).
 Also, as \(J_h\) varies smoothly in \(h\), \(h \to \epsilon(h)\) is smooth.

 Define
  \(\beta_{r,g} \colon W_r(g) \to P_\per S O_n\)
 by:
 \[
  \beta_{r,g}(h)(t) = \exp\big( t \log_0( \epsilon(h)) \big) \exp(t \pi J_h).
 \]
 This is a smooth path in \(S O_n\) since both
  \(\log_0 ( \epsilon(h))\)
 and \(J_h\) lie in \(\mf{s o}_n\).
 It varies smoothly in \(h\) since both \(\epsilon(h)\) and \(J_h\) are smooth in \(h\).
 Since
  \(\epsilon(h) = h \exp(-\pi J_h)\),
  \(\beta_{r,g}(h)(1) = h\)
 so it is a path above \(h\).
 We need to show that it lies in \(P_\pol S O_n\).

 Now \(\epsilon(h)\) respects the decomposition
  \(E_{-1}^r(h) \oplus E_r^1(h)\)
 of \(\R^n\), therefore so does \(\log_0(\epsilon(h))\).
 Accordingly, write
  \(\log_0(\epsilon(h)) = \xi_{-1}^r + \xi_r^1\).

 Consider the situation on \(E_{-1}^r(h)\).
 Since
  \(\exp(\xi_{-1}^r) = \epsilon(h) = -h\)
 (all restricted to \(E_{-1}^r(h)\)), by property~\ref{it:cplxdecomp},
  \(\xi_{-1}^r = \zeta - \pi J_\zeta\)
 for some
  \(\zeta \in \mf{s o}(E_{-1}^r(h))\)
 with \(\exp(\zeta) = h\).
 Extend \(J_\zeta\) to \(\R^n\) by defining it to be zero on \(E_r^1(h)\).
 Let
  \(\xi = \zeta + \xi_r^1\).
 Then \(\exp(\xi) = h\), \(\lb \xi, J_\zeta \rb = 0\), and
  \(\log_0( \epsilon(h)) = \xi - \pi J_\zeta\).
 Therefore:
 \[
  \beta_{r,g}(h)(t) = \exp(t \xi) \exp(-t \pi J_\zeta) \exp(t \pi J_h).
 \]

 Now \(J_\zeta\) and \(J_h\) are both extensions to \(\R^n\) by zero of unitary structures on \(E_{-1}^r(h)\), so by property~\ref{it:cplxpol},
  \(\exp(-t \pi J_\zeta) \exp(t \pi J_h)\)
 is a polynomial loop in \(S O_n\).
 Hence \(\beta_{r,g}(h)\) lies in \(P_\pol S O_n\).
\end{proof}

\subsection{The Polynomial Vector Bundles}
\label{sec:polvect}

Now that we have principal bundles, given a representation we can construct vector bundles.
Let \(V\) be a finite dimensional vector space with an inner product, either real or complex.
Let \(L V\) be the space of smooth loops in \(V\) and \(L_\pol V\) the space of polynomial loops.
If \(V\) is complex then
 \(L_\pol V = V \lb z^{-1},z \rb\);
if \(V\) is real then
 \(L_\pol V = L V \cap L_\pol (V \otimes \C)\).

Let \(G\) be a compact, connected Lie group which acts on \(V\) by isometries.
In the polynomial case, assume that \(G\) is one of \(U_n\), \(S U_n\), or \(S O_n\).
Then \(L G\) acts on \(L V\) and \(L_\pol G\) acts on \(L_\pol V\).
Therefore we have vector bundles over \(G\) together with a bundle inclusion:
\[
 P_\pol V \coloneqq P_\pol G \times_{L_\pol G} L_\pol V \to P_\per V \coloneqq P_\per G \times_{L G} L V.
\]

We shall now give an alternative view of these vector bundles which will be more enlightening in terms of their structure.

Let \(P V\) be the full path space of \(V\).
Define
 \(\tau \colon P V \to P V\)
to be the shift operator:
 \((\tau \beta )(t) = \beta(t + 1)\).
Let \(D\) denote the differential operator:
 \((D \beta)(t) = \diff{\beta}{t}(t)\).
There is a strong connection between these operators: \(D\) is the infinitesimal generator of the group of translations on \(P V\) and \(\exp(D) = \tau\).

The motivation for considering these operators is that they give simple descriptions of \(L V\) and \(L_\pol V\) inside \(P V\).
The loop space, \(L V\), is the \(+1\)\enhyp{}eigenspace of \(\tau\).
The space of polynomial loops inside \(L V\) is the union of the finite dimensional \(D\)\enhyp{}invariant subspaces of \(L V\).

In the complex case, we can write this as the linear span of the eigenvectors of \(D\).
This does not carry over to the real case, however, as the only eigenvectors of \(D\) are the constant maps.

\begin{theorem}
 Let \(g\) in \(G\).
 The fibre of \(P_\per V\) above \(g\) is the space of \(\phi \in P V\) such that \(\tau \phi = g \phi\).

 The fibre of \(P_\pol V\) above \(g\) is the union of the finite dimensional \(D\)\enhyp{}invariant subspaces of the fibre of \(P_\per V\) above \(g\).
\end{theorem}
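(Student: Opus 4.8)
The plan is to realise both fibres concretely inside \(P V\) through the evaluation pairing, and then to reduce the statement about finite-dimensional \(D\)-invariant subspaces to the description already established for \(L_\pol V\) as the union of the finite-dimensional \(D\)-invariant subspaces of \(L V\), after ``untwisting'' by a one-parameter subgroup through \(g\).

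First I would treat the \(P_\per V\) statement. Define \(P_\per G \times L V \to P V\) by \((\alpha, v) \mapsto (t \mapsto \alpha(t) v(t))\). Since \(v\) is a loop and \(\alpha(t+1) = g \alpha(t)\) for \(g \coloneqq \alpha(1)\alpha(0)^{-1}\), the resulting path \(\phi\) satisfies \(\tau \phi = g \phi\), and replacing \((\alpha, v)\) by \((\alpha \gamma, v)\) with \(\gamma \in L G\) has the same effect as replacing it by \((\alpha, \gamma v)\); hence this descends to a linear map from the fibre of \(P_\per V\) over \(g\) to \(F_g \coloneqq \{\phi \in P V : \tau \phi = g \phi\}\). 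Surjectivity and injectivity follow at once from the fact, proved above, that \(L G\) acts simply transitively on the fibre of \(P_\per G \to G\) over \(g\): given \(\phi \in F_g\) and any \(\alpha\) in that fibre, \(t \mapsto \alpha(t)^{-1}\phi(t)\) is a smooth loop mapping to \(\phi\), while two representatives over \(g\) differ by the unique loop relating their first components. Running the identical argument with \(L_\pol G\) and \(P_\pol G\) in place of \(L G\) and \(P_\per G\) shows that the bundle inclusion \(P_\pol V \to P_\per V\) is injective on the fibre over \(g\); thus that fibre may be regarded as a subspace of \(F_g\), and it remains only to identify which subspace.

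For this I would note first that \(D\) commutes with \(\tau\) and with the constant linear operator \(g\), hence restricts to an operator on \(F_g\). Fix \(\xi \in \mf{g}\) with \(\exp(\xi) = g\) --- possible since \(G\) is compact and connected --- and define \(\Phi_\xi \colon L V \to P V\) by \(\Phi_\xi(w)(t) = \exp(t\xi) w(t)\). A direct computation shows that \(\Phi_\xi\) is a linear isomorphism onto \(F_g\), with inverse \(\phi \mapsto (t \mapsto \exp(-t\xi)\phi(t))\), and that it conjugates \(D|_{F_g}\) into the operator \(A_\xi \coloneqq D + \xi\) on \(L V\), where \(D\) now denotes differentiation of loops and \(\xi\) acts pointwise on \(V\). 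The crucial feature of \(A_\xi\) is that, relative to the Fourier mode decomposition (passing to \(V \otimes \C\) if \(V\) is real), it is block diagonal: it carries each mode into itself and acts on the \(k\)-th mode as the endomorphism \(\xi + 2\pi i k\) of \(V\).

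With this in hand both inclusions are short. An element of the fibre of \(P_\pol V\) over \(g\) has a representative \((\eta_{\xi'}\beta, v)\) with \(\exp(\xi') = g\), \(\beta \in L_\pol G\) and \(v \in L_\pol V\); it equals \(\Phi_{\xi'}(\beta v)\), and \(\beta v\) is a polynomial loop whose Fourier modes are supported in some finite set \(S\), hence lies in the finite-dimensional, \(A_{\xi'}\)-invariant space of smooth loops in \(V\) with modes in \(S\), whose \(\Phi_{\xi'}\)-image is a finite-dimensional \(D\)-invariant subspace of \(F_g\) containing \(\phi\). Conversely, if \(\phi\) lies in a finite-dimensional \(D\)-invariant subspace \(U \subseteq F_g\), then \(\widetilde U \coloneqq \Phi_\xi^{-1}(U)\) is a finite-dimensional \(A_\xi\)-invariant subspace of \(L V\), and the point is that \(\widetilde U \subseteq L_\pol V\): taking \(p\) to be the minimal polynomial of \(A_\xi|_{\widetilde U}\), so that \(p(A_\xi)\widetilde U = 0\), the operator \(p(A_\xi)\) acts on the \(k\)-th mode as \(p(\xi + 2\pi i k)\), which is invertible except when \(p\) vanishes at \(2\pi i k + \lambda\) for some eigenvalue \(\lambda\) of \(\xi\) --- and that occurs for only finitely many \(k \in \Z\) --- so every element of \(\widetilde U\) has finite Fourier support, i.e.\ lies in \(L_\pol V\). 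Hence \(w \coloneqq \Phi_\xi^{-1}(\phi) \in L_\pol V\), and \(\phi = \Phi_\xi(w)\) is the image of the class of \((\eta_\xi, w) \in P_\pol G \times L_\pol V\), which lies in the fibre of \(P_\pol V\) over \(g\). The well-definedness and equivariance checks are routine; the one genuinely substantive step, and where I expect the main --- if modest --- difficulty to lie, is the claim that a finite-dimensional \(A_\xi\)-invariant subspace of \(L V\) consists of polynomial loops, which is settled by the minimal-polynomial and Fourier-support count just indicated.
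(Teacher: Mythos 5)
Your proof is correct, and its first half (identifying the fibre of \(P_\per V\) over \(g\) with \(F_g = \{\phi \in P V : \tau\phi = g\phi\}\) via \((\alpha,\beta) \mapsto \alpha\beta\) and simple transitivity of the loop group on fibres of the path space) coincides with the paper's. For the second half you share the paper's untwisting step\emhyp{}multiplication by \(\eta_\xi\) for a chosen \(\xi\) with \(\exp(\xi)=g\)\emhyp{}but the key mechanism is different. The paper never conjugates \(D\) into \(A_\xi = D + \xi\) and never looks at Fourier modes: it notes that \(\xi\), being a finite dimensional operator, has a minimal polynomial and commutes with \(D\), so every finite dimensional \(D\)\enhyp{}invariant subspace sits inside one invariant under both \(D\) and the pointwise action of \(\xi\); the product\hyp{}rule identity \(D(\eta_{\pm\xi}\beta) = \eta_{\pm\xi}(\pm\xi\beta + D\beta)\) then shows multiplication by \(\eta_{\pm\xi}\) preserves such subspaces, which transports the question to the untwisted case, where the paper simply quotes the description (asserted just before the theorem) of \(L_\pol V\) as the union of the finite dimensional \(D\)\enhyp{}invariant subspaces of \(L V\). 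Your argument\emhyp{}taking the minimal polynomial of \(A_\xi\) restricted to \(\widetilde U\) and using that \(p(\xi + 2\pi i k)\) is invertible for all but finitely many \(k\)\emhyp{}proves the twisted analogue of that description outright, so it is more self\hyp{}contained (at \(\xi = 0\) it even reproves the fact the paper only asserts), at the cost of complexifying in the real case and doing the spectral bookkeeping mode by mode; the paper's route is shorter given that background fact and treats real and complex \(V\) uniformly without any Fourier analysis.
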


\begin{proof}
 An element of \(P_\per V\) in the fibre above \(g\) is represented by a pair \((\alpha, \beta)\) with \(\alpha \in P_\per G\) above \(g\) and \(\beta \in L V\).
 Any alternative representative is of the form
  \((\alpha \gamma, \gamma^{-1} \beta)\)
 for some \(\gamma \in L G\).

 Thus the map \(\phi \colon \R \to V\)
 defined by
  \(\phi \coloneqq \alpha \beta\)
 depends only on the element of \(P_\per V\) and not on the choice of representative.
 This satisfies:
 \[
  (\tau \phi)(t) = \phi(t + 1) = \alpha(t + 1) \beta(t + 1) = g \alpha(t) \beta(t) = g \phi(t).
 \]
 Hence \(\tau \phi = g\phi\).

 Conversely, suppose that \(\tau \phi = g \phi\).
 Choose some \(\alpha \in P_\per G\) above \(g\) and define
  \(\beta \coloneqq \alpha^{-1} \phi\).
 Then
  \(\beta(t + 1) = \alpha^{-1}(t) g^{-1} g \phi(t) = \beta(t)\)
 so \(\beta \in L V\).
 Changing \(\alpha\) to \(\alpha \gamma\) changes \(\beta\) to \(\gamma^{-1} \beta\).
 Hence the element in \(P_\per V\) represented by \((\alpha, \beta)\) depends only on \(\phi\).

 Now we consider the polynomial path space.
 We need to show that the fibre of \(P_\pol V\) above \(g\) is the union of the finite dimensional subspaces of the fibre of \(P_\per V\) that are \(D\)\enhyp{}invariant.

 Let \(\xi \in \mf{g}\) be such that \(\exp(\xi) = g\).
 This defines two actions on \(P V\).
 The first is multiplication by \(\eta_{-\xi}\),
  \(\alpha \mapsto \eta_{-\xi} \alpha\),
 which maps \(P_{\per,g} V\) onto \(L V\).
 The second is multiplication by \(\xi\),
  \(\alpha \mapsto \xi \alpha\),
 extending the action of \(\mf{g}\) on \(V\) to \(P V\).
 As \(\xi\) is a finite dimensional operator, it has a minimum polynomial.
 This is true also of the second action on \(P V\).
 Therefore any finite dimensional subspace of \(P V\) is contained in a finite dimensional \(\xi\)\enhyp{}invariant subspace.
 Moreover, the action of \(\xi\) on \(P V\) commutes with that of \(D\) so any finite dimensional \(D\)\enhyp{}invariant subspace of \(P V\) is contained in a finite dimensional subspace that is both \(D\)\enhyp{}invariant and \(\xi\)\enhyp{}invariant.

 Hence as \(\xi\) preserves both \(P_{\per, g} V\) and \(L V\), when considering the union of finite dimensional \(D\)\enhyp{}invariant subspaces in either, it is sufficient to consider those that are in addition \(\xi\)\enhyp{}invariant.

 We shall now show that \(W \subseteq L V\) is \(\xi\) and \(D\)\enhyp{}invariant if and only if \(\eta_\xi W\) is \(\xi\) and \(D\)\enhyp{}invariant.
 This will establish the result.

 The \(\xi\)\enhyp{}invariance is straightforward since \(\xi\) commutes with \(\eta_\xi\).
 Hence \(W \subseteq L V\) is \(\xi\)\enhyp{}invariant if and only if
  \(\eta_\xi W \subseteq P_{\per,g} V\)
 is \(\xi\)\enhyp{}invariant.

 If \(W\) is \(\xi\) and \(D\)\enhyp{}invariant, then consider
  \(\alpha \in \eta_{\pm \xi} W\)
 (the \(\pm\) allows us to consider both directions at once).
 This is of the form \(\eta_{\pm \xi} \beta\)
 for some \(\beta \in W\).
 Then:
 \[
  D \alpha = (D \eta_{\pm \xi}) \beta + \eta_{\pm \xi} (D \beta) = \eta_{\pm \xi} ( \pm \xi \beta + D \beta) \in \eta_{\pm \xi} W.
 \]
 Hence \(\eta_{\pm \xi} W\) is \(D\)\enhyp{}invariant.
\end{proof}

An immediate corollary of this is that the fibres of \(P_\per V\) and of \(P_\pol V\) are \(D\)\enhyp{}invariant.
For \(P_\per V\) this follows from the fact that \(\exp(D) = \tau\) so \(D\) and \(\tau\) commute.
If we wish to emphasise the fibre, we shall refer to \(D\) as \(D_g\).

In the complex case, as \(D_g\) is skew\hyp{}adjoint, any element of the fibre of \(P_\pol V\) above \(g\) is thus the sum of eigenvectors of \(D_g\).

When viewing a fibre of \(P_\per V\) or \(P_\pol V\) as a subspace of \(P V\), the corresponding element \(g \in G\) is not uniquely determined by any one path (contrast with the case of \(P_\per G\) or \(P_\pol G\)).
Thus to keep track of the fibre, we shall often use the notation \((g, \phi)\).

There is an action of \(G\) on \(P_\per V\) and on \(P_\pol V\) given by the following equivalent definitions:
\begin{align*}
 g \cdot \lb \alpha, \beta \rb &%
 = \lb g \alpha, \beta \rb, \\
 g \cdot \lb \alpha, \beta \rb &%
 = \lb g \alpha g^{-1}, g \beta \rb, \\
 g \cdot (h, \phi) &%
 = (g h g^{-1}, g \phi).
\end{align*}
We put in both of the top two descriptions to show that the two actions of \(G\) on \(P_\per G\) (and thus on \(P_\pol G\)) define the same action on \(P_\per V\) (and \(P_\pol V\)).
This action preserves the subbundle \(P_\pol V\) and sends the operator \(D_h\) to \(D_{g h g^{-1}}\).

\subsection{Other Loop Bundles}
\label{sec:otherbundles}

The groups \(L_\pol G\) and \(L G\) act on other linear loop spaces besides the actions on \(L_\pol V\) and \(L V\) considered in the previous section.
Whenever we have such an action then we can define an associated vector bundle over \(G\).
Moreover, whenever we have a map between loop spaces which is \(L_\pol G\)\enhyp{}equivariant or \(L G\)\enhyp{}equivariant then we get a similar map between the corresponding bundles.

In particular, we define the periodic \(L^2\)\enhyp{}path space as
\[
 P_\per^2 V \coloneqq P_\per G \times_{L G} L^2 V.
\]
Using the definitions of section~\ref{sec:linear} we define
\[
 P^2_{\per,r} V \coloneqq P_\pol G \times_{L_\pol G} L^2_{r} V.
\]

\begin{proposition}
The bundle \(P_{\per} V\) has a nuclear locally equivalent co\hyp{}orthogonal structure with structure group \(\gl_\res\).
\end{proposition}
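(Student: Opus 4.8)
The plan is to exhibit the Hilbert bundle, the bundle map, and the simultaneous trivialisations explicitly, using the bundles already constructed. For the Hilbert bundle I would take $E \coloneqq P^2_{\per,r} V = P_\pol G \times_{L_\pol G} L^2_r V$ for a fixed $r \in (1,\infty)$. Intrinsically, the fibre $E_g$ is the $L^2_r$\hyp{}closure of the union of the finite dimensional $D_g$\hyp{}invariant subspaces of the fibre of $P_\per V$ above $g$; I equip it with the inner product of Theorem~\ref{th:ldeiso}, namely the one transported from $L^2 V$ along $\cos_r D_g$ (continuity of the fibrewise inner product following from the continuous dependence of $D_g$, hence of $\cos_r D_g$, on $g$). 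Since $G$ acts on $V$ by isometries we have $G \subseteq U(V)$, so $L_\pol G \subseteq L_\pol U(V)$, and by Theorem~\ref{th:ldeiso} the action of $L_\pol G$ on $L^2_r V$ factors through $\gl_\res$; thus $E$ is a bundle of Hilbert spaces with structure group $\gl_\res$.

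Next I would build the bundle map $E \to P_\per V$. In Fourier coordinates $L^2_r V$ is the space of sequences $(a_p)$ with $(r^{\abs{p}} a_p) \in \ell^2$; as $r > 1$ these are rapidly decreasing, so $L^2_r V \subseteq L V$, and this inclusion is $L_\pol G$\hyp{}equivariant. It therefore induces a map $E = P_\pol G \times_{L_\pol G} L^2_r V \to P_\pol G \times_{L_\pol G} L V$, and since $P_\pol G \to G$ is a principal $L_\pol G$\hyp{}bundle one has $P_\pol G \times_{L_\pol G} L G \cong P_\per G$ and hence $P_\pol G \times_{L_\pol G} L V \cong P_\per V$; composing gives $E \to P_\per V$, which over $g$ is $\lb \alpha, \beta \rb \mapsto \lb \alpha, \beta \rb$. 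Fibrewise this is the injection $L^2_r V \hookrightarrow L V$, so it is injective; its image contains the fibres of $P_\pol V$, and since the polynomial loops $L_\pol V$ are dense in $L V$ the image is dense. This is the required co\hyp{}orthogonal structure.

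For nuclearity, pick $\rho$ with $1 < \rho < r$; after the Fourier identifications the inclusion $L^2_r V \hookrightarrow L^2_\rho V$ is the diagonal operator $(a_p) \mapsto (a_p)$ with singular values $(\rho/r)^{\abs{p}}$, which is nuclear because $\sum_{p \in \Z}(\rho/r)^{\abs{p}} < \infty$; composing with the continuous inclusion $L^2_\rho V \hookrightarrow L V$ shows $L^2_r V \hookrightarrow L V$ is nuclear. Composition with the (continuous linear) transition functions of either bundle preserves nuclearity, so $E \to P_\per V$ is fibrewise nuclear uniformly over $G$.

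Finally, for local equivalence I would use that $G$ is one of $U_n$, $SU_n$, $SO_n$, so $P_\pol G \to G$ is locally trivial (Proposition~\ref{prop:unloctriv}, Theorem~\ref{th:lctrso}, and the analogue for $SU_n$): a local section $s \colon U \to P_\pol G$ trivialises $E|_U \cong U \times L^2_r V$ and, since $s$ also takes values in $P_\per G$, trivialises $P_\per V|_U \cong U \times L V$, and under these the bundle map becomes the constant inclusion $L^2_r V \hookrightarrow L V$, with transition functions for $E$ valued in the image of $L_\pol G$ in $\gl_\res$ and those for $P_\per V$ in $L G$. This gives a locally equivalent structure with structure group $\gl_\res$ — but not a locally trivial one, since $\cos_r D_g$, hence the fibre inner product read off in the trivialisation, varies with $g$ (equivalently, $\gl_\res$ does not act by isometries). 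The only genuinely hard input is the local triviality of $P_\pol G \to G$, already established in the preceding sections and the reason the statement is restricted to these groups; the identification $P_\pol G \times_{L_\pol G} L V \cong P_\per V$ and the compatibility of the three trivialisations under a single section of $P_\pol G$ are the remaining points needing care, while the nuclearity estimate and the density of $L_\pol V$ in $L V$ are routine.
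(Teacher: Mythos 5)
Your proposal is correct and follows essentially the same route as the paper: take the Hilbert bundle \(P^2_{\per,r} V = P_\pol G \times_{L_\pol G} L^2_r V\), note the \(L_\pol G\)\hyp{}equivariant, dense, nuclear inclusion \(L^2_r V \to L V\), and use simultaneous trivialisation over \(P_\pol G\) together with theorem~\ref{th:ldeiso} to get local equivalence with structure group \(\gl_\res\). You merely spell out details (the nuclearity estimate, density of \(L_\pol V\) in \(L V\)) that the paper asserts without proof.
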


\begin{proof}
The inclusion \(P^2_{\per,r} V \to P_\per V\) has fibrewise dense image and so defines a co\hyp{}orthogonal structure on \(P_\per V\).
As both have structure group \(L_\pol G\) we can simultaneously trivialise both with respect to this group, thus the co\hyp{}orthogonal structure is locally equivalent.
The action on \(L_\pol G\) on \(L^2_r V\) factors through \(\gl_\res\) so the co\hyp{}orthogonal structure has structure group \(\gl_\res\).
Finally, the inclusion \(L^2_r V \to L V\) is a nuclear map so the co\hyp{}orthogonal structure is nuclear.
\end{proof}

In section~\ref{sec:linear} we constructed an isomorphism \(L^2_r V \to L^2 V\) but this isomorphism was not \(L_\pol G\)\enhyp{}equivariant and so we do not automatically get an associated isomorphism of bundles.
It is, however, possible to transfer this isomorphism to the bundles.

\begin{lemma}
 The operator \(\cos_r D\) defines a bundle isomorphism
  \(P_{\pol} V \to P_{\pol} V\)
 which extends to an isomorphism
  \(P^2_{\per,r} V \to P^2_{\per} V\).
\end{lemma}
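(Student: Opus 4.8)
The statement bundles together a fibrewise construction, whose content is Theorem~\ref{th:ldeiso}, with a gluing-and-smoothness check; I would organise the proof along those lines. Fix \(g\in G\). Recall that the fibre of \(P_\per V\) over \(g\) is \(\{\phi\in PV:\tau\phi=g\phi\}\), that \(D_g\) preserves it (since \(\exp(D)=\tau\), so \(D\) commutes with \(\tau\)), and that the fibre of \(P_\pol V\) over \(g\) is the union of the finite dimensional \(D_g\)-invariant subspaces of it. Choose \(\xi\in\mf{g}\) with \(\exp\xi=g\); multiplication by \(\eta_\xi^{-1}\) identifies the fibre of \(P_\per V\) over \(g\) with \(LV\) and carries \(D_g\) to \(D+\xi\), which (after complexifying \(V\) if it is real, exactly as in Section~\ref{sec:linear}) is skew-adjoint on \(L^2V\) and, once \(\xi\) is diagonalised on \(V\), is precisely a model operator \(D_s\) as in Definition~\ref{def:cosds}. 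Hence the restriction of \(D_g\) to any finite dimensional \(D_g\)-invariant subspace is diagonalisable with eigenvalues \(i\mu\), \(\mu\in\R\); there the power series for \(\cos_r D_g\) converges and gives the operator with eigenvalues \(\cosh(\mu\log r)\ge 1\), which is therefore invertible and preserves that subspace. So \(\cos_r D\) restricts on each fibre to a linear automorphism of \(P_\pol V\); moreover, under the \(\eta_\xi\)-identifications of \((P^2_{\per,r}V)_g\) with \(L^2_rV\) and of \((P^2_\per V)_g\) with \(L^2V\) it agrees with \(\cos_r D_s\), which by Theorem~\ref{th:ldeiso} extends to an isomorphism \(L^2_rV\to L^2V\).

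It remains to see that this fibrewise family is a bundle map. Here I would use the local triviality of \(P_\pol G\) over \(G\), which we have proved for the groups at hand, namely \(U_n\), \(SU_n\) and \(SO_n\) (Proposition~\ref{prop:unloctriv} and Theorem~\ref{th:lctrso}). Over a trivialising open \(U\) with local section \(\alpha\) of \(P_\pol G\), the assignment \([(\alpha_g,\beta)]\leftrightarrow(g,\beta)\) trivialises \(P_\pol V\), \(P^2_{\per,r}V\) and \(P^2_\per V\) as \(U\times L_\pol V\), \(U\times L^2_rV\) and \(U\times L^2V\); in these coordinates \(D_g\) becomes \(D+\delta_g\) with \(\delta_g\coloneqq\alpha_g^{-1}\dot\alpha_g\in L_\pol\mf{g}\) depending smoothly on \(g\), and writing \(\alpha_g=\eta_\xi\gamma_g\) with \(\gamma_g\in L_\pol G\) one checks \(D+\delta_g=\gamma_g^{-1}(D+\xi)\gamma_g\). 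Thus in these coordinates \(\cos_r D_g=\gamma_g^{-1}(\cos_r D_s)\gamma_g\): a (not necessarily smoothly varying) \(L_\pol G\)-conjugate of a fixed \(\cos_r D_s\), which already shows fibrewise that \(\cos_r D_g\) carries \(L^2_rV\) isomorphically onto \(L^2V\), since \(L_\pol G\) acts by continuous isomorphisms on both and \(\cos_r D_s\) does by Theorem~\ref{th:ldeiso}. Smoothness of \(\cos_r D_g\) in \(g\) does not use this conjugate form: it follows from smoothness of \(g\mapsto D_g=D+\delta_g\) together with continuity of the functional calculus \(T\mapsto\cos_r T\) on the spaces involved, and, for the extension, from the uniform bounds implicit in the continuity of \(\gl_\res\)-conjugation established in the proof of Theorem~\ref{th:ldeiso}. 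Assembling over \(U\), and then over \(G\), one obtains a bundle automorphism \(\cos_r D\) of \(P_\pol V\) and a bundle isomorphism \(P^2_{\per,r}V\to P^2_\per V\) which, by construction, restricts to \(\cos_r D\) on the dense subbundle \(P_\pol V\) (included in \(P^2_{\per,r}V\) and in \(P^2_\per V\) in the obvious ways).

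I expect the real obstacle to be this bundle-map and smoothness step rather than the fibrewise algebra. Because \(\cos_r D\) is \emph{not} equivariant for the \(L_\pol G\)-action (nor the \(LG\)-action), the associated-bundle formalism does not deliver the extension for free, and one must verify by hand that the intrinsically defined fibre operators \(\cos_r D_g\) vary smoothly and are uniformly bounded in local trivialisations. The explicit local sections of \(P_\pol G\) from Section~\ref{sec:path} are what make this feasible: they present \(D_g\), hence \(\cos_r D_g\), as an \(L_\pol G\)-conjugate of one of the fixed operators \(D_s\) of Section~\ref{sec:linear}, at which point Theorem~\ref{th:ldeiso} and the continuity of the \(\gl_\res\)-action on \(L^2_rV\) and \(L^2V\) close the argument.
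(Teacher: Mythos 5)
Your argument is correct and takes essentially the same route as the paper: the paper's own proof is simply the remark that the lemma is the global version of Theorem~\ref{th:ldeiso}, applied fibrewise using the globally defined operator \(D\) of the preceding section, which is exactly your fibrewise step. The local-trivialisation and smoothness checks you spell out (via the explicit sections of \(P_\pol G\)) are left implicit in the paper's two-sentence proof.
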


\begin{proof}
 This is the global version of theorem~\ref{th:ldeiso}.
 All we need to do to apply that theorem is to show that we can choose the operator \(D\) consistently on the fibres of \(P_\pol V\).
 The operator \(D\) of the previous section will do.
\end{proof}

\begin{corollary}
If we enlarge the structure group of \(P_\per V\) to \(E L \gl(V)\) then this co\hyp{}orthogonal structure becomes locally trivial.
\end{corollary}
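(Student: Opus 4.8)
The plan is to invoke the expanded loop group \(E L \gl(V)\) from the end of Section~\ref{sec:linear}, together with the homotopy constructed there between the standard representation of \(L_\pol U(V)\) and a representation that is unitary on \(L^2_r V\). The facts I would use are: \(E L \gl(V)\) acts on \(L V\) and contains \(L_\pol G\)---via the embedding of \(G\) in \(U_n\) or \(S O_n\) and the split inclusion \(L \gl(V) \hookrightarrow E L \gl(V)\)---with the restricted action on \(L V\) the usual one; and, for our fixed \(r\) and any \(s\) as in Definition~\ref{def:cosds}, \(E L \gl(V)\) contains a subgroup \(U_{\pol,r} \cong L_\pol U(V)\) which preserves \(L^2_r V \subseteq L V\) and acts on it by isometries, the inclusion \(U_{\pol,r} \hookrightarrow E L \gl(V)\) being homotopic through homomorphisms to the standard one. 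The co-orthogonal structure of the preceding proposition is the one on \(P_\per V = P_\pol G \times_{L_\pol G} L V\) defined by the dense subbundle \(P^2_{\per,r} V = P_\pol G \times_{L_\pol G} L^2_r V\); fixing a trivialising cover \(\{U_\alpha\}\) for the (locally trivial) principal bundle \(P_\pol G\), both \(P_\per V\) and \(P^2_{\per,r}V\) are described by the same cocycle \(g_{\alpha\beta} \colon U_\alpha \cap U_\beta \to L_\pol G\), acting on \(L V\) and on \(L^2_r V\) respectively, and the inclusion \(P^2_{\per,r}V \to P_\per V\) is fibrewise the inclusion \(L^2_r V \hookrightarrow L V\).

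Now enlarge the structure group of \(P_\per V\) to \(E L \gl(V)\): its cocycle becomes \(\phi_0 \circ g_{\alpha\beta}\), where \(\phi_0 \colon L_\pol G \hookrightarrow E L \gl(V)\) is the standard embedding. Restricting the homotopy above to \(L_\pol G \subseteq L_\pol U(V)\) gives a path of homomorphisms \(\phi_t \colon L_\pol G \to E L \gl(V)\), \(t \in \lb 0,1 \rb\), from \(\phi_0\) to a homomorphism \(\phi_1\) with image in \(U_{\pol,r}\). The cocycles \(\{\phi_t \circ g_{\alpha\beta}\}\) assemble into a principal \(E L \gl(V)\)-bundle over \(G \times \lb 0,1 \rb\), hence into an \(L V\)-bundle over \(G \times \lb 0,1 \rb\) carrying an \(L^2_r V\)-subbundle; since \(G\) is a compact manifold, restriction to the two endpoints gives an isomorphism of these pairs. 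Thus the co-orthogonal structure we want, viewed with structure group \(E L \gl(V)\), is isomorphic to the one with cocycle \(\phi_1 \circ g_{\alpha\beta}\). But \(\phi_1 \circ g_{\alpha\beta}\) takes values in \(U_{\pol,r}\), which acts unitarily on \(L^2_r V\); so in this trivialisation the inclusion \(P^2_{\per,r}V \to P_\per V\) is the fixed inclusion \(L^2_r V \hookrightarrow L V\) and the inner product on \(P^2_{\per,r}V\) is the fixed one. That is precisely local triviality of the co-orthogonal structure, the Hilbert bundle now having unitary---indeed \(U_\res\)---structure group.

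The step I expect to be the main obstacle is making the argument of the second paragraph respect the \emph{pair} \((L V, L^2_r V)\) and not merely \(L V\): one needs each \(\phi_t(L_\pol G)\) to preserve \(L^2_r V\) setwise inside \(L V\), since otherwise the homotopy-invariance argument only produces an isomorphism of \(E L \gl(V)\)-bundles and one is left having to match up two a priori different dense Hilbert subbundles of \(P_\per V\) by hand. I would deal with this by returning to the explicit homotopy of representations of \(L_\pol U(V)\) built in Section~\ref{sec:linear}---which is assembled from the family of isometric identifications \(L^2_{r'} V \cong L^2 V\) for \(r' \in \lb 1, r \rb\)---and verifying that it may be chosen so that every intermediate representation still preserves \(L^2_r V\) as a subspace of \(L V\). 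Everything else in the proof is formal bundle theory.
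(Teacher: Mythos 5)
Your argument is correct and is in essence the paper's: the paper's own proof consists of the single observation that, once the structure group is enlarged to \(E L \gl(V)\), one may use the subgroup isomorphic to \(L_\pol U(V)\) which acts unitarily on \(L^2_r V\), with the homotopy of representations described at the end of section~\ref{sec:linco} implicitly supplying the identification of bundles. Your cocycle-level argument over the compact base \(G\) (the bundle over \(G \times \lb 0,1 \rb\) and restriction to its endpoints), together with the check that the intermediate representations preserve \(L^2_r V\) so that the isomorphism respects the pair and its inner products, is exactly the detail that the paper's two-sentence proof leaves unstated.
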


\begin{proof}
By viewing the original structure group of \(P_\per V\) as \(E L \gl(V)\) we can find a subgroup isomorphic to \(L_\pol U(V)\) which acts unitarily on \(L^2_r V\).
Thus the co\hyp{}orthogonal structure constructed above is locally trivial.
\end{proof}

Once again, there is an action of \(G\) on all of this structure which maps down to the conjugation action on the base.
The action on the vector bundles is as for the vector bundles of smooth and polynomial paths.
This action preserves the extra orthogonal structure as the action of \(G\) on each of the linear spaces involved preserves this structure.

\section{Free Loop Bundles}
\label{sec:free}

The goal of this section is to construct the Hilbert subspace of the vector bundle \(L E \to L M\), where \(E \to M\) is a real or complex vector bundle.
The first part of this construction involves defining the polynomial loop bundle, \(L_\pol E \to L M\), and proving that it is a locally trivial vector bundle modelled on \(L_\pol \F^n\), for \F one of \R or \C.
Once this has been defined, we can thicken it to a bundle \(L^2_r E\) modelled on \(L^2_r \F^n\).
This defines the locally equivalent co\hyp{}orthogonal structure of \(L E\).
We then construct a bundle isomorphism \(L^2_r E \to L^2 E\) similar to the linear one of section~\ref{sec:linco}.
To prove theorem~\ref{th:coriem} we apply this construction with \(E = T M\) using the canonical identification of \(L T M\) with \(T L M\).

In section~\ref{sec:polprop} we discuss the basic properties of the polynomial loop bundle, and thus of \(L^2_r E\).
In particular we consider the action of the group of diffeomorphisms of the circle.
The natural action on \(L E\) does not preserve the polynomial subbundle but it can be modified to an action which does.

\subsection{Notation}
\label{sec:loop}

Let \(M\) be a smooth finite dimensional manifold without boundary.
Let \(G\) be one of \(U_n\), \(S U_n\), or \(S O_n\).
Let \F be the corresponding field.
Let \(Q \to M\) be a principal \(G\)\enhyp{}bundle.
Let
 \(E \coloneqq Q \times_G \F^n\)
be the corresponding vector bundle.
As \(G\) preserves the inner product on \(\F^n\), \(E\) carries a fibrewise inner product.
Let \(\nabla\) be a covariant differential operator on \(E\) coming from a connection on \(Q\).

We think of a point in a fibre \(Q_p\) as being an isometry \(\F^n \to E_p\).
We shall also use the \emph{adjoint bundle} associated to \(Q\),
 \(Q^\ad \coloneqq Q \times_{\text{conj}} G\)
where \(G\) acts on itself by conjugation.
This is a bundle of groups.
A point in a fibre \(Q^\ad_p\) is an isometry of \(E_p\) to itself.

It is a standard result that the loop and path spaces of \(E\), \(L E\) and \(P E\), form vector bundles over, respectively, the loop and path spaces of \(M\), \(L M\) and \(P M\), with frame bundles the loop and path spaces of \(Q\), \(L Q\) and \(P Q\), and adjoint bundles the loop and path spaces of \(Q^\ad\), \(L Q^\ad\) and \(P Q^\ad\).

It is not strictly relevant, but should be remarked that whilst \(L M\) (and the other loop spaces) are infinite dimensional smooth manifolds, \(P M\) (and the other path spaces) are not.
They are smooth \emph{spaces} but are not locally linear.
The reason that this does not concern us is that the path spaces are used merely as a ``background canvas'' on which the interesting structure is painted.
See \cite{akpm} for more details on infinite dimensional smooth spaces.

\subsection{The Holonomy Operator}
\label{sec:holonmy}

Recall that we can view \(M\) in \(P M\) as the subspace of constant paths and \(L M\) in \(P M\) as the subspace of periodic paths (of period \(1\)).
For \(X\) each of \(E\), \(Q\), and \(Q^\ad\), define a bundle \(P^M X \to M\) by restricting \(P X\) to this subspace.
The fibre of \(P^M X\) above \(p \in M\) is thus \(P (X_p)\).
In terms of the original bundles \(E\), \(Q\), and \(Q^\ad\) we have the following descriptions.
\begin{align*}
 P^M E &%
 = E \otimes P \F, \\
 &%
 = Q \times_G P \F^n, \\
 P^M Q &%
 = Q \times_{G} P G, \\
 P^M Q^\ad &%
 = Q \times_{\text{conj}} P G.
\end{align*}
As with \(M\) inside \(L M\) and \(P M\), \(G\) sits inside \(L G\) and \(P G\) as the constant loops.
In the middle line, the action is as a subgroup, in the third line the action is via conjugation.

The covariant differential operator defines a parallel transport operator.
To describe this then for \(t \in \R\) let \(e_t \colon P M \to M\)
be the map which evaluates at time \(t\) and let \(X^t \to P M\) be the bundle \(e_t^* X\); thus
 \(X^t_\gamma = X_{\gamma(t)}\).
We use the same notation for the restriction of these bundles to \(L M\).
The covariant differential operator defines three compatible families of bundle maps
 \(\psi_X^t \colon X^t \to P X\).
The properties of these maps are:
\begin{align}
 \psi_E^t(p q w) &%
 = \psi_{Q^\ad}^t(p) \psi_Q^t(q) w &%
 &
 p \in Q^{\ad,t}_\gamma,\, q \in Q^t_\gamma,\, w \in \F^n \subseteq P \F^n.
 \label{eq:psitriple}
 \\
 \psi_X^t e_t \psi_X^s &%
 = \psi_X^s.
 \label{eq:psibreak}
 \\
 e_{s + 1} \psi_X^{t + 1} &%
 = e_s \psi_X^t &%
 &
 \text{over } L M.
 \label{eq:psiloop}
\end{align}
For the second, note that \(e_t \psi_X^s\) is a map from \(X^s\) to \(X^t\).
This compatibility relation is the statement that if one parallel transports from time \(s\) to time \(t\) and then on from time \(t\) to some when else, it is the same as transporting straight from \(s\) to ones final time.
For the last, over \(L M\) then \(X^{t + 1} = X^t\) so the domains and codomains of these maps are the same.
This property is then an application of the fact that the parallel transport operator is intrinsic to \(M\), therefore the parallel transport from \(X^t\) to \(X^s\) is the same as that from \(X^{t + 1}\) to \(X^{s + 1}\).

Let
 \(P^{M,t} X \coloneqq e_t^* P^M X\),
so
 \(P^{M,t}_\alpha X = P(X_{\alpha(t)})\).
The parallel transport operators extend to bundle equivalences:
\begin{equation}
 \label{eq:buneq}
 \Psi_X^t \colon P^{M,t} X \to P X, 
\end{equation}
with the property that
 \(e_s( \Psi_X^t \alpha) = (e_s \psi_X^t) (\alpha(s))\).
Note that these equivalences have been chosen such that \((\Psi_X^t \alpha)(s)\)
always lies in \(X^s\) no matter which \(t\) was the starting point.

Let \(P^L X\) be the restriction of \(P X\) to \(L M\).
Thus \(P^L X\) consists of paths in \(X\) which project down to loops in \(M\).
There is an obvious inclusion of \(L X\) in \(P^L X\) and it is straightforward to recognise this submanifold: \(L X\) consists of those paths in \(P^L X\) which are themselves periodic.
Note that for any path \(\beta\) in \(P^L X\) then \(\beta(t + 1)\) and \(\beta(t)\) both lie in the same fibre of \(X \to M\).

Thus in the right hand side of~\eqref{eq:buneq} (restricted to \(L M\)), it is straightforward to recognise the subbundles consisting of the loops.
We wish to transfer this recognition principle to the left hand side of~\eqref{eq:buneq}.
We do this using the \emph{holonomy operator}.

\begin{defn}
 On \(L M\), define the fibrewise operators
  \(h_X \colon X^0 \to X^0\)
 by \(h_X = e_1 \psi_X^0\).
\end{defn}

Over \(P M\), \(e_1 \psi_X^0\) is a map \(X^0 \to X^1\).
Over \(L M\) then \(X^0 = X^1\) so \(h_X\) is as defined.
The fibres of \(Q^\ad\) act on each of \(E\), \(Q\), and \(Q^\ad\): on \(E\) the action is by definition, on \(Q\) and on \(Q^\ad\) by composition.

\begin{lemma}
 The operator \(h_E\) is a section of \(Q^{\ad,0}\).
 The operators \(h_E\), \(h_Q\), and \(h_{Q^\ad}\) satisfy:
  \(h_{Q^\ad}(p) h_E = h_E p\),
 and \(h_Q(q) = h_E q\).
 Thus \(h_E\) determines both \(h_Q\) and \(h_{Q^\ad}\).
\end{lemma}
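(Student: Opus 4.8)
The plan is to argue fibrewise over a loop \(\gamma \in L M\). By definition \(h_X = e_1 \psi_X^0\), and since \(\gamma(1) = \gamma(0)\) the bundles \(X^1\) and \(X^0\) agree, so \(h_X\) is an endomorphism of \(X^0_\gamma\); concretely \(h_E(v)\) is the holonomy of \(\nabla\) around \(\gamma\), obtained by parallel transporting \(v \in E_{\gamma(0)}\) once around the loop. The strategy is to extract both displayed identities from the compatibility relation \eqref{eq:psitriple} by evaluating at time \(s = 1\), and then to deduce the ``section of \(Q^{\ad,0}\)'' assertion and the final clause from them.

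For the identity \(h_Q(q) = h_E q\): apply \eqref{eq:psitriple} at \(t = 0\) with \(p = 1_{E_{\gamma(0)}}\), whose parallel transport is the constant identity section, to get \(\psi_E^0(q w)(s) = \psi_Q^0(q)(s)\, w\) for \(q \in Q^0_\gamma\) and \(w \in \F^n\); evaluating at \(s = 1\) gives \(h_E(q w) = h_Q(q)\, w\), which says precisely that the frames \(h_E \circ q\) and \(h_Q(q)\) coincide. For the identity \(h_{Q^\ad}(p) h_E = h_E p\): apply \eqref{eq:psitriple} at \(t = 0\) with general \(p \in Q^{\ad,0}_\gamma\) and evaluate at \(s = 1\), obtaining \(h_E(p q w) = h_{Q^\ad}(p)\bigl(h_Q(q) w\bigr) = h_{Q^\ad}(p)\bigl(h_E(q w)\bigr)\); since \(q w\) ranges over all of \(E_{\gamma(0)}\) as \(w\) varies, this reads \(h_E p = h_{Q^\ad}(p) h_E\), as required.

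Given the first identity, \(h_E = h_Q(q)\, q^{-1}\) for any frame \(q \in Q_{\gamma(0)}\); as \(q\) and \(h_Q(q)\) lie in the same fibre of the principal \(G\)-bundle \(Q\), the element \(g \coloneqq q^{-1} h_Q(q)\) lies in \(G\), so \(h_E = q g q^{-1}\) is the isometry of \(E_{\gamma(0)}\) represented by \([q, g] \in Q \times_{\text{conj}} G = Q^\ad\); since \(h_Q\) is smooth in \(\gamma\) and local frames exist, \(h_E\) is a smooth section of \(Q^{\ad,0}\). Finally the last clause is immediate, the two identities expressing \(h_Q(q) = h_E q\) and \(h_{Q^\ad}(p) = h_E p h_E^{-1}\).

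The bookkeeping here is routine; the one point that genuinely uses the geometry — rather than just manipulating \eqref{eq:psitriple} — is the first assertion, namely that the holonomy endomorphism \(h_E\) reduces to an element of the structure group \(Q^\ad\) rather than being an arbitrary isometry of the fibre. This is exactly where one invokes that \(\nabla\) is induced from the connection on \(Q\) (so that parallel transport carries frames to frames and is \(G\)-equivariant) and that \(G\) acts faithfully on \(\F^n\) (so that \(Q^\ad\) embeds in \(\End(E)\)). I expect no other difficulty.
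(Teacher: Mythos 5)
Your argument is correct and follows essentially the same route as the paper: both displayed identities are read off from equation~\eqref{eq:psitriple} by evaluating the parallel transport at time \(1\), with the final clause an immediate rearrangement. The only (welcome) difference is that you deduce the membership of \(h_E\) in \(Q^{\ad,0}\) from \(h_E = h_Q(q)q^{-1} = qgq^{-1}\) with \(g \in G\), which is slightly more careful than the paper's terse ``fibrewise isometry, hence a section of \(Q^{\ad,0}\)''\emhyp{}a point that actually matters when \(G\) is a proper subgroup of the full isometry group, e.g.\ \(S O_n\) or \(S U_n\).
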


\begin{proof}
 Since \(e_1 \psi^0_E\) is a fibrewise isometry \(E^0 \to E^0\), it is a section of \(Q^{\ad,0}\).
 Then from~\eqref{eq:psitriple}, for \(p \in Q^{\ad,0}\), \(q \in Q^0\), \(v \in E^0\), and
  \(w \in \F^n \subseteq P \F^n\):
 \begin{align*}
  (h_E p)v &%
  = (e_1 \psi^0_E p ) v \\
  &%
  = e_1 (\psi^0_E (p v)) \\
  &%
  = e_1 (\psi^0_{Q^\ad}(p) \psi^0_E(v)) &%
  &
  \text{by equation~\eqref{eq:psitriple}} \\
  &%
  = (e_1 \psi^0_{Q^\ad})(p) (e_1 \psi^0_E)(v) \\
  &%
  = h_{Q^\ad}(p) h_E(v).
  \\
  (h_E q)w &%
  = (e_1 \psi^0_E q) w \\
  &%
  = e_1( \psi^0_E( q w)) \\
  &%
  = e_1( \psi^0_Q(q) w ) &%
  &
  \text{by equation~\eqref{eq:psitriple}}\\
  &%
  = (e_1 \psi^0_Q)(q) w \\
  &%
  = h_Q(q) w.
  &&\qedhere
 \end{align*}
\end{proof}

\begin{lemma}
  \(e_{t + 1} \psi_X^0 = e_t \psi^0_X h_X\).
\end{lemma}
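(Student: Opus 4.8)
The identity \(e_{t+1}\psi_X^0 = e_t \psi_X^0 h_X\) is a statement about maps \(X^0 \to X^{t}\) over \(L M\) (recall \(X^{t+1}=X^t\) over \(L M\)), and it should follow by combining the three structural properties~\eqref{eq:psitriple}, \eqref{eq:psibreak}, \eqref{eq:psiloop} of the parallel transport family together with the definition \(h_X = e_1\psi_X^0\). The plan is to manipulate the left-hand side \(e_{t+1}\psi_X^0\) into a composite that can be ``broken'' at time \(1\), so that the tail beyond time \(1\) becomes recognizable as the holonomy operator and the part before time \(1\) becomes \(e_t\psi_X^0\).

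First I would use the loop-compatibility relation~\eqref{eq:psiloop}, namely \(e_{s+1}\psi_X^{t+1} = e_s\psi_X^t\) over \(L M\), in the direction that rewrites \(e_{t+1}\psi_X^0\). Writing \(e_{t+1}\psi_X^0 = e_{t+1}\psi_X^{(-1)+1}\) is awkward, so instead I would proceed on the other side: by~\eqref{eq:psiloop} we have \(e_t\psi_X^0 = e_{t+1}\psi_X^1\) (taking \(s = t\) and the exponent \(0 = (-1)+1\) is the wrong move; rather take \(s=t\), upper index \(0\), so \(e_{t+1}\psi_X^{0+1}=e_t\psi_X^0\), i.e.\ \(e_{t+1}\psi_X^1 = e_t\psi_X^0\)). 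So the target identity becomes \(e_{t+1}\psi_X^0 = e_{t+1}\psi_X^1 h_X\), and it suffices to show \(\psi_X^0 = \psi_X^1 h_X\) after post-composing with \(e_{t+1}\) — or more robustly, to show \(e_s\psi_X^0 = e_s\psi_X^1 h_X\) for all \(s\), then specialize \(s = t+1\).

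The heart of the matter is therefore the relation \(\psi_X^0 = \psi_X^1\, h_X\) (as maps \(X^0\to P X\), using \(X^1 = X^0\) over \(L M\) to make \(h_X\colon X^0\to X^1\) composable with \(\psi_X^1\)). For this I would invoke~\eqref{eq:psibreak} in the form \(\psi_X^1 e_1 \psi_X^0 = \psi_X^0\): parallel transport from time \(0\), then evaluate at time \(1\), then transport from time \(1\), equals transport from time \(0\). Since \(h_X = e_1\psi_X^0\) by definition, the left-hand side is exactly \(\psi_X^1 h_X\), giving \(\psi_X^1 h_X = \psi_X^0\) directly. Post-composing with \(e_s\) and setting \(s = t+1\), then applying~\eqref{eq:psiloop} as above to rewrite \(e_{t+1}\psi_X^1 = e_t\psi_X^0\), yields \(e_{t+1}\psi_X^0 = e_t\psi_X^0\, h_X\), as required.

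The only real subtlety — the step I expect to need the most care — is bookkeeping the domains and codomains: the relations~\eqref{eq:psibreak} and~\eqref{eq:psiloop} are genuinely restricted to \(L M\) (where the identifications \(X^{t+1}=X^t\) hold), and one must be sure that \(h_X\) is being composed on the correct side and that \(\psi_X^1\) really does accept the output of \(h_X\). Once the index arithmetic in~\eqref{eq:psiloop} is pinned down, the argument is a two-line chain of the three given identities, so there is no analytic content to worry about; the verification is purely formal.
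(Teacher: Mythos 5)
Your argument is correct and is essentially the paper's own proof: the paper likewise writes \(e_{t+1}\psi_X^0 = e_{t+1}\psi_X^1 e_1\psi_X^0\) using~\eqref{eq:psibreak}, rewrites \(e_{t+1}\psi_X^1 = e_t\psi_X^0\) by~\eqref{eq:psiloop}, and identifies \(e_1\psi_X^0 = h_X\); you merely perform the same three steps in a slightly different order. No gap.
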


\begin{proof}%
 \begin{align*}
  e_{t + 1} \psi_X^0 &%
  = e_{t + 1} \psi^1_X e_1 \psi^0_X &%
  &
  \text{by equation~\eqref{eq:psibreak}} \\
  &%
  = e_t \psi^0_X e_1 \psi^0_X &%
  &
  \text{by equation~\eqref{eq:psiloop}} \\
  &%
  = e_t \psi^0_X h_X. &&\qedhere
 \end{align*}
\end{proof}

\begin{corollary}
 Under the bundle isomorphism of equation~\eqref{eq:buneq}, the subbundle \(L X\) of \(P^L X\) corresponds to:
 \[
  \{\alpha(t) \in P^{M,0} X \colon h_X \alpha(t + 1) = \alpha(t)\}.
 \]
\end{corollary}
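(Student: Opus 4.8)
The plan is to unwind the definition of the bundle equivalence $\Psi_X^0$ and feed in the two lemmas just established. Recall that, restricted to $L M$, the map $\Psi_X^0$ is a bundle equivalence from $P^{M,0} X$ to $P^L X$, and that $L X$ sits inside $P^L X$ as the subbundle of those paths that are themselves periodic of period $1$. So the task reduces to determining, for an element $\alpha$ of the fibre $P^{M,0}_\gamma X = P(X_{\gamma(0)})$ over a loop $\gamma$, exactly when the path $\Psi_X^0 \alpha$ satisfies $(\Psi_X^0 \alpha)(t+1) = (\Psi_X^0\alpha)(t)$ for all $t$; this comparison is legitimate because $\gamma(t+1) = \gamma(t)$ over $L M$, so both sides lie in $X_{\gamma(t)}$.

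The computation I would then carry out uses the defining property $(\Psi_X^0\alpha)(s) = (e_s \psi_X^0)(\alpha(s))$ followed by the identity $e_{t+1}\psi_X^0 = e_t\psi_X^0 h_X$ of the preceding lemma, giving
\[
  (\Psi_X^0\alpha)(t+1) = (e_{t+1}\psi_X^0)(\alpha(t+1)) = (e_t\psi_X^0)\bigl(h_X\,\alpha(t+1)\bigr).
\]
Writing $\beta$ for the path $t \mapsto h_X\,\alpha(t+1)$ in $X_{\gamma(0)}$ — which again lies in $P^{M,0}_\gamma X$ since $h_X$ is an operator on $X_{\gamma(0)}$ — this says that the shifted path $t\mapsto(\Psi_X^0\alpha)(t+1)$ is precisely $\Psi_X^0\beta$. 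Since $\Psi_X^0$ is a bundle equivalence, and hence injective on each fibre, $\Psi_X^0\alpha$ is periodic if and only if $\beta=\alpha$, i.e.\ if and only if $h_X\,\alpha(t+1)=\alpha(t)$ for every $t$, which is exactly the asserted description.

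This argument is essentially bookkeeping, and I do not anticipate a genuine obstacle. The only points requiring a little care are keeping track of which fibre of $X$ each value lives in — so that the periodicity condition and the operator $h_X$ are applied in the right places, using the identification $X^{t+1}=X^t$ over $L M$ consistently — and invoking injectivity of the bundle equivalence $\Psi_X^0$ itself rather than of the individual parallel transport maps $e_t\psi_X^0$, although the latter (being isomorphisms) would serve equally well.
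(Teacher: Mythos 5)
Your argument is correct and follows the paper's own proof essentially verbatim: both compute \((\Psi_X^0\alpha)(t+1)\) via the lemma \(e_{t+1}\psi_X^0 = e_t\psi_X^0 h_X\), compare with \((\Psi_X^0\alpha)(t) = (e_t\psi_X^0)(\alpha(t))\), and cancel the isomorphism. The paper cancels the pointwise isomorphisms \(e_t\psi_X^0 \colon X^0 \to X^t\) rather than invoking injectivity of \(\Psi_X^0\) on the shifted path, but as you note yourself these are interchangeable here.
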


\begin{proof}
 An element \(\alpha \in P^{M,0} X\)
 is mapped to a loop in \(P^L X\) if and only if
  \((\Psi^0_X \alpha)(t + 1) = (\Psi^0_X \alpha)(t)\)
 for all \(t \in \R\).
 The left hand side of this simplifies to:
 \[
  e_{t + 1} (\Psi^0_X \alpha) = (e_{t + 1} \psi^0_X)(\alpha(t + 1)) = (e_t \psi^0_X)(h_X \alpha(t + 1))
 \]
 whilst the right hand side simplifies to:
 \[
  e_t (\Psi^0_X \alpha) = (e_t \psi^0_X)(\alpha(t)).
 \]

 Since
  \(e_t \psi^0_X \colon X^0 \to X^t\)
 is an isomorphism, this implies that \(\Psi^0_X \alpha\) is a loop if and only if
  \(h_X \alpha(t + 1) = \alpha(t)\)
 for all \(t \in \R\).
\end{proof}

For \(X\) each of \(E\), \(Q\), and \(Q^\ad\) let \(Y\) be \(\F^n\) or \(G\) as appropriate and define
\[
 P^M_\per X \coloneqq Q \times_G P_\per Y.
\]
As \(P_\per Y \to G\) is a \(G\)\enhyp{}equivariant bundle where the \(G\) action on the base is by conjugation, \(P^M_\per X\) is a fibre bundle over \(Q^\ad\).
Via the evaluation map \(e_0 \colon L M \to M\)
we therefore get bundles
 \(e_0^* P^M_\per X \to Q^{\ad,0}\).

For a section
 \(\chi \colon L M \to Q^{\ad,0}\)
and \(X\) each of \(E\), \(Q\), and \(Q^\ad\) define
\[
 L^\chi X \coloneqq \chi^* e_0^* P^M_\per X.
\]

\begin{corollary}
 For \(X\) each of \(Q\), \(Q^\ad\), and \(E\), \(\Psi^0_X\) restricts to a bundle isomorphism
  \(L^{h_E^{-1}} X \to L X\). \noproof
\end{corollary}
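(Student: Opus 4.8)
The plan is to read this off the preceding corollary, the only real work being to unwind the definition of $L^{h_E^{-1}}X$. That corollary identifies $LX$, viewed through the bundle equivalence $\Psi^0_X$ of~\eqref{eq:buneq}, with the subbundle of $P^{M,0}X = e_0^* P^M X$ cut out by the condition $h_X\alpha(t+1) = \alpha(t)$ for all $t$; equivalently, by the condition that $\alpha(t+1)$ be recovered from $\alpha(t)$ through the inverse of the holonomy operator. Since $\Psi^0_X$ is already a bundle equivalence $P^{M,0}X \to PX$, it suffices to identify $L^{h_E^{-1}}X \coloneqq (h_E^{-1})^* e_0^* P^M_\per X$, sitting inside $P^{M,0}X$, with exactly this subbundle.

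First I would record how $P^M_\per X = Q \times_G P_\per Y$ sits over $P^M X$: it is a fibre bundle over $Q^\ad$, and its fibre over a point $a$ of $Q^\ad_p$ (an isometry of $E_p$) is the set of paths $\phi$ in the corresponding fibre of $X$ satisfying the $a$\enhyp{}twisted periodicity $\phi(t+1) = a\cdot\phi(t)$, where $\cdot$ is the tautological action of the fibre of $Q^\ad$ on that of $X$ --- by definition on $E$, by composition on $Q$ and on $Q^\ad$. This is just the fibrewise version of the defining periodicity of $P_\per Y$. Pulling back along $e_0 \colon LM \to M$ and then along the section $h_E^{-1}$ of $Q^{\ad,0}$ gives, over a loop $\gamma$, the space of paths $\phi$ in $X_{\gamma(0)}$ with $\phi(t+1) = h_E^{-1}\cdot\phi(t)$.

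It remains to match $\phi(t+1) = h_E^{-1}\cdot\phi(t)$ with $h_X\phi(t+1) = \phi(t)$, and this is what the lemma above relating $h_E$, $h_Q$ and $h_{Q^\ad}$ provides. For $X = E$ and $X = Q$ the operator $h_X$ is literally ``act by $h_E$'' through the $Q^\ad$\enhyp{}action on $X$ (trivially for $E$; via $h_Q(q) = h_E q$ for $Q$), so $h_X\phi(t+1) = \phi(t)$ rearranges to $\phi(t+1) = h_E^{-1}\cdot\phi(t)$ and the two subbundles of $P^{M,0}X$ coincide; then $\Psi^0_X$ restricts to the asserted isomorphism, and running the argument for all three $X$ at once makes the isomorphisms compatible via~\eqref{eq:psitriple}. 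The step I expect to need the most care is the case $X = Q^\ad$: there $h_{Q^\ad}$ is conjugation by $h_E$ (the relation $h_{Q^\ad}(p)h_E = h_E p$) rather than plain composition, so one must check carefully that the twisted periodicity defining $L^{h_E^{-1}}Q^\ad$ really is the one $h_{Q^\ad}$ cuts out --- keeping the three distinct actions straight (standard on $\F^n$, composition on $Q$, the action on $Q^\ad$) is the only delicate point. Only $X = E$ is needed for Theorem~\ref{th:coriem}, and there the identification is immediate.
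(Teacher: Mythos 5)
Your overall route is the intended one: the paper offers no proof because the statement is meant to follow by combining the preceding corollary with the unwinding of \(L^{h_E^{-1}} X = (h_E^{-1})^* e_0^* P^M_\per X\), which is exactly what you do. For \(X = E\) and \(X = Q\) your identification is complete and correct: the fibre of \(P^M_\per X\) over a point \(a \in Q^\ad_p\) is the set of paths \(\phi\) in the fibre of \(X\) with \(\phi(t+1) = a\cdot\phi(t)\) (action by definition on \(E\), by composition on \(Q\)), and since \(h_E\) and \(h_Q(q) = h_E q\) are literally ``act by \(h_E\)'', the condition \(h_X\phi(t+1) = \phi(t)\) of the previous corollary cuts out the same subset as the \(h_E^{-1}\)\hyp{}twisted periodicity, so \(\Psi^0_X\) restricts as claimed; this is all that Theorem~\ref{th:coriem} needs.

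The step you explicitly deferred, \(X = Q^\ad\), is however not a formality, and as you have set things up it does not close by a simple rearrangement. Unwinding \(Q \times_{\text{conj}} P_\per G\) in the same way, a point of the fibre of \(P^M_\per Q^\ad\) over \(a \in Q^\ad_p\) is a path \(\beta\) of isometries of \(E_p\) with \(\beta(t+1)\beta(t)^{-1} = a\), i.e.\ \(\beta(t+1) = a\circ\beta(t)\): the twisting inherited from the projection \(P_\per G \to G\), \(\alpha \mapsto \alpha(1)\alpha(0)^{-1}\), is by \emph{left composition} for \(Q^\ad\) just as for \(Q\). Hence \(L^{h_E^{-1}} Q^\ad\) is cut out by \(\beta(t+1) = h_E^{-1}\beta(t)\), whereas the previous corollary, with \(h_{Q^\ad}(p) = h_E p h_E^{-1}\), characterises \(L Q^\ad\) inside \(P^{M,0} Q^\ad\) by \(\beta(t+1) = h_E^{-1}\beta(t)h_E\). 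These conditions differ in general: for a flat line bundle over the circle with holonomy \(h \ne 1\), the fibres of \(Q^\ad\) are abelian, so the loop condition is plain periodicity while the \(h_E^{-1}\)\hyp{}twisted condition is \(\beta(t+1) = h^{-1}\beta(t)\). So to finish you must either read \(P^M_\per Q^\ad\) as built from the conjugation\hyp{}twisted periodic paths (those \(\alpha\) with \(\alpha(t+1) = g\alpha(t)g^{-1}\), lying over \(g\)), in which case the section \(h_E^{-1}\) produces exactly the condition \(h_{Q^\ad}\beta(t+1) = \beta(t)\) and \(\Psi^0_{Q^\ad}\) restricts as claimed, or else prove the statement only for \(E\) and \(Q\). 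Your final sentence correctly isolates this as the delicate point, but ``rearranges'' is precisely what fails there, so the \(Q^\ad\) case remains an open gap in your argument as written.
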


\subsection{Other Loop Bundles}
\label{sec:polybundles}

The advantage of the above descriptions of \(L E\), \(L Q\), and \(L Q^\ad\) in terms of the holonomy operator is that they can be generalised using other bundles over \(G\).
All that is needed is to have a \(G\)\enhyp{}equivariant bundle over \(G\) with respect to the conjugation action on the base.
In particular, we can take the bundles that we constructed in section~\ref{sec:linear}.

\begin{defn}
 Define the following spaces
 \begin{align*}
  P^M_\pol E &%
  \coloneqq Q \times_G P_\pol \F^n, \\
  P^M_\pol Q &%
  \coloneqq Q \times_G P_\pol G, \\
  P^M_\pol Q^\ad &%
  \coloneqq Q \times_{\text{conj}} P_\pol G, \\
  P^{2,M}_\per E &%
  \coloneqq Q \times_G P^2_{\per} \F^n, \\
  P^{2,M}_{\per,r} E &%
  \coloneqq Q \times_G P^2_{\per,r} \F^n,
 \end{align*}
 where in the last two lines we carry over the associated orthogonal structure.
\end{defn}

For a section
 \(\chi \colon L M \to Q^{\ad,0}\)
we therefore have associated bundles
\[
 L^\chi_\type X \coloneqq \chi^* e_0^* P^M_\type X.
\]
The following is immediate.

\begin{lemma}
 The bundles \(L^\chi_\type X\) are locally trivial fibre bundles modelled on \(L_\type Y\). \noproof
\end{lemma}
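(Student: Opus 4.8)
The plan is to strip the definition $L^\chi_\type X = \chi^* e_0^* P^M_\type X$ down one layer at a time, reducing everything to the local triviality of $P_\type Y \to G$. This last fact is already available in each case: for the polynomial bundles it is Proposition~\ref{prop:unloctriv} when $G = U_n$, the analogous proposition for $S U_n$, and Theorem~\ref{th:lctrso} for $S O_n$; for the periodic bundle $P_\per G \to G$ it is the proposition proved via the local sections $\alpha_h$; and in the two remaining cases the fibre bundles $P^2_\per \F^n \to G$ and $P^2_{\per,r} \F^n \to G$ are associated to $P_\per G$ and $P_\pol G$ respectively (sections~\ref{sec:polvect} and~\ref{sec:otherbundles}) and so inherit local triviality from them.

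First I would record that in every case $P_\type Y \to G$ is a locally trivial fibre bundle with fibre $L_\type Y$ — a torsor over $L_\type G$ when $Y = G$, and the relevant space of loops in $\F^n$ when $Y = \F^n$ — and that it carries a $G$-action covering the conjugation action of $G$ on the base: for the group bundles this is Lemma~\ref{lem:pathconj}, for the vector bundles it is the action $g \cdot (h, \phi) = (g h g^{-1}, g \phi)$ recorded at the end of section~\ref{sec:polvect}, which descends compatibly to $P^2_\per \F^n$ and $P^2_{\per,r} \F^n$. Since the projection $P_\type Y \to G$ is then $G$-equivariant, it descends to a map $P^M_\type X = Q \times_G P_\type Y \to Q \times_{\text{conj}} G = Q^\ad$; and because $P_\type Y \to G$ is locally trivial, so is this associated bundle — a local trivialisation near a point of $Q^\ad$ is produced from a local section of $Q \to M$ together with a local trivialisation of $P_\type Y \to G$ over a neighbourhood of the image point in $G$. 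Hence $P^M_\type X \to Q^\ad$ is a locally trivial fibre bundle with fibre $L_\type Y$.

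The remaining steps are formal. The bundle $e_0^* P^M_\type X \to Q^{\ad,0}$ is the pullback of $P^M_\type X \to Q^\ad$ along the canonical map $Q^{\ad,0} = e_0^* Q^\ad \to Q^\ad$ covering $e_0 \colon L M \to M$, and a pullback of a locally trivial fibre bundle is locally trivial; then $L^\chi_\type X = \chi^* e_0^* P^M_\type X$ is the pullback along the section $\chi \colon L M \to Q^{\ad,0}$, hence a locally trivial fibre bundle over $L M$ modelled on $L_\type Y$. The only point needing any care — and hence the main, rather mild, obstacle — is checking that $P^M_\type X$ genuinely fibres over $Q^\ad$ and not over some other associated bundle; this rests entirely on the $G$-action on $P_\type Y$ covering the conjugation action on $G$, so that $P_\type Y \to G$ is $G$-equivariant. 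Once that is in place, the direct-limit topology on the polynomial loop spaces (noted in section~\ref{sec:polypol}) makes the maps involved automatically continuous, and there is nothing further to prove.
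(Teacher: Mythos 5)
Your argument is correct and is precisely the one the paper has in mind: it marks the lemma as immediate (\emph{noproof}) because, as you observe, all the substance lies in the earlier local triviality of \(P_\type Y \to G\) and its \(G\)\enhyp{}equivariance over the conjugation action, after which forming the associated bundle over \(Q^\ad\) and pulling back along \(e_0\) and \(\chi\) is routine. Nothing in your write-up deviates from or adds to that intended route.
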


When the original linear loop space was a subspace of smooth loops and we take the inverse of the holonomy operator as our section then we can use \(\Psi_X^0\) to transfer this bundle to a subbundle of the bundle of smooth loops.

\begin{defn}
 The \emph{polynomial loop bundles}, \(L_\pol X\), for \(X\) each of \(Q\), \(Q^\ad\), and \(E\) are defined to be the images in \(L X\) of \(L^{h_E^{-1}}_\pol X\) under the map \(\Psi^0_X\).

 Similarly, we define \(L^2_{r} E\) to be the image in \(L E\) of \(L^{2,h_E^{-1}}_{r} E\)
 under \(\Psi^0_E\).
\end{defn}

We therefore have the following result.

\begin{proposition}
 The polynomial loop bundles are locally trivial with \(L_\pol Q\) a \(L_\pol G\)\enhyp{}principal bundle, \(L_\pol Q^\ad\) a bundle of groups modelled on \(L_\pol G\), and \(L_\pol E\) a vector bundle modelled on \(L_\pol \F^n\).
 Moreover:
 \begin{align*}
  L_\pol Q^\ad &%
  = L_\pol Q \times_{\text{conj}} L_\pol G, \\
  L_\pol E &%
  = L_\pol Q \times_{L_\pol G} L_\pol \F^n, \\
  L Q &%
  = L_\pol Q \times_{L_\pol G} L G, \\
  L Q^\ad &%
  = L_\pol Q \times_{\text{conj}} L G, \\
  L E &%
  = L_\pol Q \times_{L_\pol G} L \F^n, \\
  L^2_{r} E &%
  = L_\pol Q \times_{L_\pol G} L^2_{r} \F^n, \\
  L^2 E &%
  = L_\pol Q \times_{L_\pol G} L^2 \F^n.
 \end{align*}
\end{proposition}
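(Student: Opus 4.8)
The plan is to deduce every assertion from the universal case of section~\ref{sec:universal} by applying $\chi^* e_0^*(-)$ with $\chi = h_E^{-1}$ and then transporting along the bundle isomorphisms $\Psi^0_X \colon L^{h_E^{-1}} X \to L X$ of section~\ref{sec:holonmy}.

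First I would dispose of local triviality and the coarse bundle structure. By the preceding lemma, each of $L^{h_E^{-1}}_\pol X$ and $L^{2,h_E^{-1}}_{r} E$ is locally trivial, modelled respectively on $L_\pol Y$ and $L^2_{r} \F^n$; since $L_\pol X$ and $L^2_{r} E$ were \emph{defined} as their images under $\Psi^0_X$, and $\Psi^0_X$ is a bundle isomorphism onto $L X$, the bundles $L_\pol X$ and $L^2_{r} E$ inherit local triviality with the asserted model fibres. For the finer structure, $P_\pol G \to G$ is a principal $L_\pol G$-bundle (section~\ref{sec:universal}), so $P^M_\pol Q = Q \times_G P_\pol G$ is a principal $L_\pol G$-bundle, $P^M_\pol Q^\ad = Q \times_{\text{conj}} P_\pol G$ a bundle of groups modelled on $L_\pol G$, and $P^M_\pol E = Q \times_G P_\pol \F^n$ a vector bundle modelled on $L_\pol \F^n$; all of this survives pullback, and $\Psi^0_X$ transports it to $L_\pol Q$, $L_\pol Q^\ad$ and $L_\pol E$ because $\Psi^0_X$ is built from parallel transport in $Q$, which is $G$-equivariant, and hence intertwines the right $L G$-action (so also the $L_\pol G$-action), the conjugation action, and the linear structure.

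Then come the seven identities. Four of them --- $L Q = L_\pol Q \times_{L_\pol G} L G$, $L Q^\ad = L_\pol Q \times_{\text{conj}} L G$, $L E = L_\pol Q \times_{L_\pol G} L \F^n$ and $L^2 E = L_\pol Q \times_{L_\pol G} L^2 \F^n$ --- rest on the single observation that $P_\pol G \subseteq P_\per G$ is a reduction of structure group of $P_\per G$ from $L G$ to $L_\pol G$; this is precisely the conjunction of the fibrewise identification of $P_\pol G \to G$ with $L_\pol G$ and of the local triviality theorems of section~\ref{sec:universal}. It yields $P_\per G = P_\pol G \times_{L_\pol G} L G$, and consequently $P^2_\per V = P_\pol G \times_{L_\pol G} L^2 V$. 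The remaining three --- $L_\pol Q^\ad = L_\pol Q \times_{\text{conj}} L_\pol G$, $L_\pol E = L_\pol Q \times_{L_\pol G} L_\pol \F^n$ and $L^2_{r} E = L_\pol Q \times_{L_\pol G} L^2_{r} \F^n$ --- use instead the associated-bundle descriptions $P_\pol V = P_\pol G \times_{L_\pol G} L_\pol V$ and $P^2_{\per,r} V = P_\pol G \times_{L_\pol G} L^2_{r} V$ from section~\ref{sec:universal}. In every case I substitute the relevant description into $P^M_\type X = Q \times_G P_\type Y$, commute the two associated-bundle constructions --- legitimate because the $G$- and $L_\pol G$-actions commute --- to rewrite it as $P^M_\pol Q \times_{L_\pol G} F$ for the appropriate fibre $F$, then apply $\chi^* e_0^*(-)$, which commutes with associated bundles, and finally $\Psi^0_X$. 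For instance $P^M_\pol E = Q \times_G \big(P_\pol G \times_{L_\pol G} L_\pol \F^n\big) = P^M_\pol Q \times_{L_\pol G} L_\pol \F^n$, whence $L_\pol E = L_\pol Q \times_{L_\pol G} L_\pol \F^n$; the remaining six run identically.

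The genuinely hard work has already been carried out in section~\ref{sec:universal}: without the case-by-case local triviality of $P_\pol G \to G$ for $G$ one of $U_n$, $S U_n$, $S O_n$ one could not assert that $P_\pol G$ is a reduction of $P_\per G$, and the identities above would collapse. Granting that, the only real obstacle remaining here is the bookkeeping --- checking that $\Psi^0_X$ intertwines each of the actions at play, so that it restricts to isomorphisms of the polynomial and $L^2_{r}$ subbundles compatibly with their principal, group and linear structures, and that the two associated-bundle constructions really do commute in each instance.
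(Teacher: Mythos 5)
Your proposal is correct and follows essentially the route the paper intends: the paper states this proposition without a separate proof, as an immediate consequence of the preceding lemma (local triviality of the bundles \(L^\chi_\type X\)), the definition of \(L_\pol X\) and \(L^2_r E\) as images under \(\Psi^0_X\), and the universal-case constructions and associated-bundle descriptions of section~\ref{sec:universal}. Your write-up simply makes explicit the same transfer-and-bookkeeping argument (pull back by \(\chi^* e_0^*\) with \(\chi = h_E^{-1}\), commute the associated-bundle constructions, transport along \(\Psi^0_X\)) that the paper leaves implicit.
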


\begin{corollary}
 The bundle \(L E \to L M\) has a nuclear locally equivalent co\hyp{}orthogonal structure with structure group \(\gl_\res\).
 This structure depends naturally on the inner product and compatible connection on \(E\). \noproof
\end{corollary}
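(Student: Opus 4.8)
The plan is to assemble the corollary directly from the preceding proposition, which gives the fibrewise identification $L E = L_\pol Q \times_{L_\pol G} L \F^n$ together with the thickened bundle $L^2_r E = L_\pol Q \times_{L_\pol G} L^2_r \F^n$ and the local triviality of $L_\pol Q$ as an $L_\pol G$\enhyp{}principal bundle. First I would observe that the inclusion $L^2_r \F^n \to L \F^n$ is $L_\pol G$\enhyp{}equivariant (the $L_\pol G$\enhyp{}action on each is the restriction of the $L_\pol M_n(\F)$\enhyp{}action studied in section~\ref{sec:linear}), hence the associated\hyp{}bundle construction applied to $L_\pol Q$ yields a vector bundle map $L^2_r E \to L E$ over $L M$. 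Because $L^2_r \F^n$ is dense in $L \F^n$ and the inclusion is injective, this bundle map is fibrewise an injective linear map with dense image; that is exactly a co\hyp{}orthogonal structure on $L E$ in the sense of \cite{math/0809.3104}.

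Next I would verify each of the three adjectives in turn. For \emph{locally equivalent}: both $L E$ and $L^2_r E$ are associated to the \emph{same} principal bundle $L_\pol Q$, so a local trivialisation of $L_\pol Q$ simultaneously trivialises both, and under this trivialisation the bundle map $L^2_r E \to L E$ is literally the fixed inclusion $L^2_r \F^n \to L \F^n$. For the \emph{structure group $\gl_\res$}: by theorem~\ref{th:ldeiso} the action of $L_\pol U(V)$ on $L^2_r \F^n$ factors through $\gl_\res$ (taking real or complex coefficients according to $\F$, and noting that $S O_n$ and $S U_n$ act through their inclusions into the relevant unitary group), so the transition functions for the Hilbert bundle $L^2_r E$, which take values in $L_\pol G \subseteq L_\pol U_n$, act on the fibre through $\gl_\res$. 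For \emph{nuclear}: the inclusion $L^2_r \F^n \to L \F^n$ factors through $L^2_r \F^n \to L^2 \F^n \to L \F^n$ where the first map is the isomorphism $\cos_r D$ and the second is the standard inclusion of $L^2$ loops into smooth loops; identifying all of these with sequence spaces as in the proof of theorem~\ref{th:ldeiso}, the composite sends a sequence $(a_p)$ with $(r^{\abs p} a_p)$ square\hyp{}summable into a rapidly decreasing sequence, and the resulting map is manifestly trace\hyp{}class because of the exponential decay $r^{-\abs p}$, which dominates any polynomial growth; since nuclearity of the typical\hyp{}fibre inclusion is a local (hence global, by the preceding paragraph) property, $L^2_r E \to L E$ is nuclear.

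Finally, for the naturality clause, I would note that every ingredient in the construction is built canonically from the connection $\nabla$ on $E$ and the fibrewise inner product: the holonomy section $h_E$ comes from $\nabla$, the principal bundle $L_\pol Q$ and the maps $\Psi^0_X$ come from parallel transport, and the reference subspace $L^2_r \F^n$ with its $\gl_\res$\enhyp{}action is fixed once and for all; a morphism of vector bundles with connection and inner product therefore induces a morphism of all these data, compatibly. I do not expect a serious obstacle here: the corollary is genuinely a repackaging of the proposition together with theorem~\ref{th:ldeiso}. The one point requiring a little care is the nuclearity estimate — one must be explicit that the composite fibre inclusion is trace\hyp{}class and not merely Hilbert\enhyp{}Schmidt or compact — but this is immediate from the exponential factor $r^{-\abs p}$ once the sequence\hyp{}space picture is in hand.
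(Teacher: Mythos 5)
Your proposal follows exactly the route the paper intends: the corollary carries a \noproof precisely because it is the associated\hyp{}bundle repackaging of the preceding proposition together with theorem~\ref{th:ldeiso} and the universal\hyp{}case proposition of section~\ref{sec:otherbundles} (simultaneous trivialisation via \(L_\pol Q\) gives local equivalence, the \(\gl_\res\)\enhyp{}factorisation of the \(L_\pol G\)\enhyp{}action gives the structure group, nuclearity of the fibre inclusion gives nuclearity), and the naturality clause is, as you say, just the observation that every ingredient is built from \(\nabla\) and the fibrewise inner product.

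One step as written is false, though it does not sink the argument: you claim the inclusion \(L^2_r \F^n \to L \F^n\) factors as \(\cos_r D\) followed by ``the standard inclusion of \(L^2\) loops into smooth loops''. There is no such inclusion\emhyp{}the containment goes the other way, \(L \F^n \subseteq L^2 \F^n\)\emhyp{}and in any case \(\cos_r D\) composed with anything would not reproduce the identity inclusion. The correct justification is the one you give in the very next clause: in the Fourier\hyp{}coefficient picture the inclusion sends a sequence \((a_p)\) with \((r^{\abs{p}} a_p)\) square summable into the space of rapidly decreasing sequences, and the exponential weight \(r^{-\abs{p}}\) dominates every polynomial seminorm of the target, which lets one write the inclusion as an absolutely convergent sum of rank\hyp{}one maps with coefficients and vectors bounded appropriately; ``nuclear'' rather than ``trace\hyp{}class'' is the right term here since the target \(L \F^n\) is a Fr\'echet space, not a Hilbert space. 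This is the fact the paper itself asserts without proof in the proposition of section~\ref{sec:otherbundles}, so either cite that or replace the spurious factorisation with the direct sequence\hyp{}space estimate.
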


The bundle \(L_\pol E\) has a more concrete description in terms of the connection on \(E\).
For any path
 \(\gamma \colon \R \to M\),
the connection on \(E\) defines a covariant differential operator
 \(D_\gamma \colon \Gamma_\R(\gamma^* E) \to \Gamma_\R(\gamma^* E)\);
that is,
 \(D_\gamma \colon P_\gamma E \to P_\gamma E\).
As the map \(\Psi_E^0\) was constructed using parallel transport, it (rather, its inverse) takes \(D_\gamma\) to the operator \(\diff{}{t}\) acting on \(P^{M,0} E\).
If \(\gamma\) happens to be a loop, \(D_\gamma\) restricts to an operator on \(L_\gamma E\).
As \(\Psi_E^0\) identifies \(L_\gamma E\) with the fibre of
 \(P_\per E \to Q^{\ad,0}\)
above \(h_E^{-1}(\gamma)\), it takes \(D_\gamma\) to the operator \(D_{h_E^{-1}(\gamma)}\).

Hence \(L_{\pol, \gamma} E\) can be constructed from the action of \(D_\gamma\) on \(L_\gamma E\) in the same fashion as \(P_{\pol,g} \F^n\) from \(P_{\per,g} \F^n\), namely as the union of the finite dimensional \(D_\gamma\)\enhyp{}invariant subspaces of \(L_\gamma E\).
In the complex case, \(L_{\pol, \gamma} E\) is the span of the eigenvalues in \(L_\gamma E\) of \(D_\gamma\).

This description is more in the spirit of \cite{jm2}.
However, one still has to show that the resulting object is a locally trivial vector bundle over \(L M\) and the simplest method for that is by considering principal bundles as above.

We can give an explicit formula for the inner product on the fibres of \(L^2_r E\) in terms of this operator \(D_\gamma\).
Let \(\ipc\) be the inner product on \(L^2 E\).
The operator \(D_\gamma\) defines an isomorphism
 \(\cos_r D_\gamma \colon L^2_r E \to L^2 E\)
and so the inner product on \(L^2_r E\) is given by
\[
 \ipv{\alpha}{\beta}_\gamma = \ip{(\cos_r D_\gamma)^{-1} \alpha}{(\cos_r D_\gamma)^{-1} \beta}_\gamma.
\]

\subsection{Properties of the Polynomial Bundle}
\label{sec:polprop}

The construction of the polynomial loop bundle started from a connection on the original bundle over \(M\).
However, it only actually used the map
 \(\psi_X^0 \colon X^0 \to P X\)
defined by the parallel transport operator.
Thus as far as the polynomial loop bundle is concerned, having a connection is overkill.
The connection is useful, though, as it implies that the polynomial loop bundle came from structure on the original manifold \(M\) and thus one can hope for more structure on the polynomial loop bundle than has yet been described.
In this section, we shall investigate this.

Before examining the interesting properties of the polynomial loop bundle, we list some basic ones that are fairly obvious:
\begin{proposition}
 Let \(M\) be a finite dimensional smooth manifold, \(E_1, E_2 \to M\) finite dimensional vector bundles over the same field with inner products and connections compatible with the inner products.
 \begin{enumerate}
 \item
  Let \(E = E_1 \oplus E_2\) orthogonally and equip \(E\) with the direct sum connection.
  Then
   \(L_\pol E = L_\pol E_1 \oplus L_\pol E_2\).

 \item
  Suppose that \(E_1\) is real, then
   \(L_\pol (E_1 \otimes \C) = (L_\pol E_1) \otimes \C\).

 \item
  Suppose that \(E_1\) is complex, then
   \(L_\pol ({E_1}_\R) = (L_\pol E_1)_\R\).

 \item
  Let
   \(\psi \colon E_1 \to E_2\)
  be a bundle isomorphism which preserves the inner products and connections.
  Then \(\psi\) defines an isomorphism
   \(L_\pol \psi \colon L_\pol E_1 \to L_\pol E_2\).

 \item
  Suppose that \(E_1\) with its inner product is a sub-bundle of \(E_2\) and that the covariant differential operator on \(E_1\) is of the form \(p \nabla\) where \(p \colon E_2 \to E_1\)
  is the orthogonal projection and \(\nabla\) is the covariant differential operator on \(E_2\).
  Then it is not necessarily the case that
   \(L_\pol E_1 = L_\pol E_2 \cap L E_1\).
 \end{enumerate}
\end{proposition}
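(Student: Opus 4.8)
The statement is a negative one, so the plan is to produce an explicit counterexample. I would take $M = S^1$, let $E_2 \coloneqq S^1 \times \C^2$ carry the trivial connection $\nabla = \diff{}{t}$, fix a smooth map $w \colon S^1 \to \C$ (to be pinned down later), and let $E_1 \subseteq E_2$ be the complex line sub\hyp{}bundle whose fibre at $t$ is $\C \cdot v(t)$ with $v(t) \coloneqq (1, w(t))$, equipped with the restricted inner product and the connection $p\nabla$, where $p$ is fibrewise orthogonal projection onto $E_1$. (One should check in passing that $p\nabla$ is metric for the induced inner product and that, since every complex line bundle over $S^1$ is trivial, $E_1$ has the form $Q \times_G \F^n$ demanded by the setup.) I would then test the claimed equality over the single loop $\gamma = \mathrm{id}_{S^1} \in L M$, where $\gamma^* E_2 = E_2$, the holonomy of $\nabla$ is trivial, $D_{2,\gamma} = \diff{}{t}$, and the fibre of $L_\pol E_2$ is $\C^2[z, z^{-1}]$, the finite Laurent polynomials in $z = e^{2\pi i t}$.

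Next I would compute the two sides over $\gamma$. For the intrinsic side: a section of $\gamma^* E_1$ is $f v$ with $f \in \Ci(S^1, \C)$, and $p\nabla(f v) = (f' + b f) v$ with $b \coloneqq \ip{v'}{v}/\abs{v}^2 = w' \conj{w}/(1 + \abs{w}^2)$, so in this trivialisation $D_{1,\gamma}$ becomes $\diff{}{t} + b$. Its eigenvectors are immediate — $f_k v$ with $f_k(t) = e^{2\pi i k t} h(t)$ for $k \in \Z$, where $h$ is an explicit nowhere\hyp{}vanishing smooth periodic function — so by the concrete description of $L_\pol$ in Section~\ref{sec:polybundles} the fibre of $L_\pol E_1$ over $\gamma$ is $\{\, q(z)\, h(t)\, v(t) : q \in \C[z, z^{-1}] \,\}$, which is dense in $L_\gamma E_1$ and in particular nonzero (as it must be, $L_\pol E_1$ being a bundle modelled on $L_\pol \C$). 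For the other side: $f v$ lies in $L_\pol E_2 \cap L_\gamma E_1$ precisely when $(f, f w) \in \C^2[z, z^{-1}]$, i.e.\ precisely when both $f$ and $f w$ are finite Laurent polynomials.

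Finally I would choose $w$ so that this intersection collapses. Take $w(t) \coloneqq \exp(e^{2\pi i t})$, whose Fourier coefficients are $1/k!$ for $k \ge 0$ and $0$ otherwise. If $f$ is a nonzero finite Laurent polynomial of top degree $N$, then a short estimate of $\widehat{f w}(n)$ as $n \to \infty$ — the $k = N$ contribution dominates because $1/(n-N)!$ beats $1/(n-k)!$ for $k < N$ — shows $\widehat{f w}(n) \ne 0$ for all large $n$, so $f w$ is not a finite Laurent polynomial, forcing $f = 0$. Hence $L_\pol E_2 \cap L_\gamma E_1 = \{0\}$, strictly smaller than the fibre of $L_\pol E_1$ over $\gamma$, and therefore $L_\pol E_1 \ne L_\pol E_2 \cap L E_1$.

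The conceptual point, worth recording after the computation, is that $D_{1,\gamma} = p\, D_{2,\gamma}$ is genuinely a different operator from $D_{2,\gamma}$, so a finite\hyp{}dimensional $D_{1,\gamma}$\hyp{}invariant subspace of $L_\gamma E_1$ need not lie inside any finite\hyp{}dimensional $D_{2,\gamma}$\hyp{}invariant subspace of $L_\gamma E_2$; one only has to choose $E_1$ ``transcendental'' enough inside $E_2$. The one real obstacle is arranging both requirements at once: the fibre of $L_\pol E_1$ over $\gamma$ must be the full (dense) polynomial loop fibre while the intersection degenerates to $0$, and the delicate ingredient is the choice of $w$ together with the Fourier estimate ruling out every nonzero $f$. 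The remaining points — that $p\nabla$ is a legitimate metric connection, the eigenfunction computation for $\diff{}{t} + b$ — are routine.
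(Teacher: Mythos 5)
Your counterexample is correct, and it is essentially the paper's own example with the roles of the two inputs swapped. The paper also takes \(M = S^1\), \(E_2 = S^1 \times \C^2\) with the trivial connection, and a line subbundle \(E_1\); but there the embedding is kept polynomial\emhyp{}the fibre at \(t\) is spanned by \(\frac{1}{\sqrt{2}}(1, e^{2\pi i t})\)\emhyp{}and the non\hyp{}polynomiality is injected through the choice of the loop \(\gamma \in L S^1\), chosen so that \(e^{2\pi i \gamma}\beta\) is never a nonzero Laurent polynomial when \(\beta\) is; the intersection fibre over that \(\gamma\) is then trivial. You instead freeze \(\gamma = \mathrm{id}\) and make the embedding itself transcendental, \(w(t) = \exp(e^{2\pi i t})\), killing the intersection by the factorial Fourier estimate; both mechanisms are the same (multiplication by a fixed non\hyp{}polynomial circle\hyp{}valued or nowhere\hyp{}zero function cannot preserve polynomiality), so the arguments are interchangeable. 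Your version is a little more laborious where you verify that the fibre of \(L_\pol E_1\) over \(\gamma\) is nonzero via the eigenfunction computation for \(\diff{}{t} + b\); the paper gets this for free from the fact that \(L_\pol E_1\) is a locally trivial bundle modelled on \(L_\pol \C\), an observation you also make and which would have let you skip the explicit \(h\). The only omission is that you say nothing about parts (1)--(4); this matches the paper, which dismisses them as immediate from the construction, but a sentence recording that should still appear.
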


\begin{proof}
 Only the last of these is not immediate from the construction.
 Let \(E_2\) be the bundle \(S^1 \times \C^2\) and \(E_1\) the bundle \(S^1 \times \C^1\).
 Include \(E_1\) in \(E_2\) via the map
  \((t, 1) \to (t, \frac1{\sqrt{2}} ( 1, e^{2 \pi i t}) )\).

 The loop space of \(E_1\) is \(L S^1 \times L \C\) and of \(E_2\) is \(L S^1 \times L \C^2\).
 The polynomial loop space of \(E_2\) is
  \(L S^1 \times L_\pol \C^2\).
 The inclusion \(L E_1 \to L E_2\) is given by:
 \[
  (\gamma, \beta) \to (\gamma, \frac1{\sqrt{2}} (\beta, e^{2 \pi i \gamma(t)} \beta)).
 \]
 Therefore
  \(L E_1 \cap L_\pol E_2\)
 consists of those loops \(\beta\) such that both \(\beta\) and
  \(e^{2 \pi i \gamma} \beta\)
 are polynomials.
 We can choose \(\gamma\) such that whenever \(\beta\) is polynomial then
  \(e^{2 \pi i \gamma} \beta\)
 is not.
 Hence there is some \(\gamma\) such that above \(\gamma\) the fibres of \(L E_1\) and \(L_\pol E_2\) intersect trivially.
\end{proof}

The advantage of having the polynomial structure defined using a connection on the original bundle is the relationship with the diffeomorphism group of the circle.
For
 \(\sigma \colon S^1 \to S^1\)
smooth (not necessarily a diffeomorphism),
 \(\gamma \colon S^1 \to M\),
and
 \(\alpha \in L_\gamma E\),
the following is a simple application of the chain rule:
\begin{equation}
 \label{eq:condiff}
 D_{\gamma \circ \sigma} (\alpha \circ \sigma) = \left((D_\gamma \alpha) \circ \sigma \right) \sigma',
\end{equation}
where
 \(\sigma' \colon S^1 \to \R\)
is such that
 \(d \sigma (\diff{}{t}) = \sigma' \diff{}{t}\).

From this formula, two results can be derived:
\begin{proposition}
 \begin{enumerate}
 \item
  The action of \(\Diff(S^1)\) on \(L E\) does not preserve the sub-bundle \(L_\pol E\).
  The subgroup of \(\Diff(S^1)\) which does preserve the sub-bundle \(L_\pol E\) is \(S^1 \rtimes \Z/2\) where the non-trivial element in the \(\Z/2\)\enhyp{}factor is the diffeomorphism \(t \to -t\).

 \item
  Let \(\nabla^a\) and \(\nabla^b\) be two different connections on \(E\).
  The two polynomial bundles so defined are different.
 \end{enumerate}
\end{proposition}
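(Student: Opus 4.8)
The plan is to read both assertions off the chain\hyp{}rule identity~\eqref{eq:condiff}, treating the real case in each part by complexifying and invoking $L_\pol(E\otimes\C)=(L_\pol E)\otimes\C$ from the preceding proposition. Throughout, $D_\gamma$ denotes the covariant differential operator along a curve $\gamma$, and I use that, in a parallel frame, $\Psi^0_E$ carries $D_\gamma$ to $\frac{d}{dt}$ and $L_{\pol,\gamma}E$ to the polynomial loops (section~\ref{sec:polprop}).

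\emph{First part.} Sufficiency is immediate: if $\sigma(t)\equiv\pm t+b\pmod1$ then $\sigma'\equiv\pm1$ is constant, so by~\eqref{eq:condiff} the reparametrisation map $R_\sigma\colon L_\gamma E\to L_{\gamma\circ\sigma}E$, $\alpha\mapsto\alpha\circ\sigma$, conjugates $D_\gamma$ to $\pm D_{\gamma\circ\sigma}$; hence a finite\hyp{}dimensional subspace is $D_\gamma$\hyp{}invariant iff its image is $D_{\gamma\circ\sigma}$\hyp{}invariant, so $R_\sigma$ maps $L_{\pol,\gamma}E$ onto $L_{\pol,\gamma\circ\sigma}E$ and $\sigma$ preserves $L_\pol E$. (Since $\Diff(S^1)$ strictly contains $S^1\rtimes\Z/2$, this already shows $\Diff(S^1)$ does not preserve $L_\pol E$.) For the converse it suffices, given $\sigma\notin S^1\rtimes\Z/2$, to display one element of $L_\pol E$ that reparametrisation by $\sigma$ moves off it. Over a constant loop $\gamma\equiv p$ parallel transport is trivial, so $D_\gamma=\frac{d}{dt}$ and $L_{\pol,\gamma}E$ is the set of polynomial loops in $E_p$; the element $t\mapsto e^{2\pi it}v$, $0\neq v\in E_p$, goes to $t\mapsto e^{2\pi i\sigma(t)}v$, which lies in $L_\pol$ only if $e^{2\pi i\sigma(t)}$ is a finite Laurent polynomial in $z=e^{2\pi it}$. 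Since this function has constant modulus $1$ on $\abs{z}=1$, it is a monomial $cz^m$ --- an elementary fact; e.g.\ for the ordinary polynomial $Q=z^NP$ one has $Q(z)\,z^{\deg Q}\overline{Q(1/\bar z)}=z^{\deg Q}$ identically, so $Q\mid z^{\deg Q}$. Thus $\sigma(t)\equiv mt+b\pmod1$, and, $\sigma$ being a self\hyp{}diffeomorphism of $\R/\Z$, $m=\pm1$, i.e.\ $\sigma\in S^1\rtimes\Z/2$ --- contradiction. For real $E$, any $\sigma$ preserving $L_\pol E$ preserves $(L_\pol E)\otimes\C=L_\pol(E\otimes\C)$, so the real case reduces to the complex one.

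\emph{Second part.} Set $A\coloneqq\nabla^a-\nabla^b$, a nonzero $\End(E)$\hyp{}valued $1$\hyp{}form, and suppose for contradiction that $L^a_\pol E=L^b_\pol E$. Along any smooth loop $\gamma$ the common fibre $V_\gamma$ is the union of the finite\hyp{}dimensional $D^a_\gamma$\hyp{}invariant subspaces of $L_\gamma E$ (section~\ref{sec:polprop}), and equally the union of the finite\hyp{}dimensional $D^b_\gamma$\hyp{}invariant ones; so $D^a_\gamma$, $D^b_\gamma$, and hence their difference $D^a_\gamma-D^b_\gamma$ all preserve $V_\gamma$. That difference is the zeroth\hyp{}order operator $\alpha\mapsto\big(t\mapsto A_{\gamma(t)}(\dot\gamma(t))\,\alpha(t)\big)$. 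Passing to a $\nabla^b$\hyp{}parallel frame via $\Psi^0_E$ turns $D^b_\gamma$ into $\frac{d}{dt}$, $V_\gamma$ into $L_\pol\C^n$, and $D^a_\gamma-D^b_\gamma$ into multiplication by a smooth loop $B_\gamma\in L\End(\C^n)$; and multiplication by a smooth loop preserves $L_\pol\C^n$ exactly when that loop is a finite Laurent polynomial (apply it to constant sections and read off Fourier coefficients). So $B_\gamma$ would be a finite Laurent polynomial, hence real\hyp{}analytic in $t$, for \emph{every} loop $\gamma$. I would break this by picking $p\in M$ and $v\in T_pM$ with $A_p(v)\neq0$ and taking $\gamma$ equal to $p$ on $[0,1/2]$ and, on $[1/2,1]$, a sufficiently small closed excursion out of $p$ roughly in the direction $v$, so that $A_{\gamma(t_0)}(\dot\gamma(t_0))\neq0$ for some $t_0\in(1/2,1)$: then $B_\gamma\equiv0$ on $[0,1/2]$ --- since $\dot\gamma$ vanishes there and each $A_q$ is linear --- while $B_\gamma(t_0)\neq0$, which no real\hyp{}analytic loop permits. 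Hence $L^a_\pol E\neq L^b_\pol E$; the real case follows from $L^a_\pol(E\otimes\C)=(L^a_\pol E)\otimes\C$ and likewise for $b$.

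The only step that is not routine bookkeeping around~\eqref{eq:condiff} is the engine of the second part: noticing that a coincidence of the two polynomial subbundles forces the difference tensor $A$ to act along every loop by an operator preserving the polynomial loops, hence to be a finite Laurent polynomial along every loop, and that this analyticity constraint is incompatible with choosing the loop to be locally constant. I expect that observation, rather than the monomial lemma or the constant\hyp{}loop computation of the first part, to be where the genuine content lies.
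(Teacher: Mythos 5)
Your proof is correct in substance. Part 1 is essentially the paper's own argument: both restrict to a constant loop, feed in the loop \(t \mapsto e^{2\pi i t}v\), observe that its image under \(\sigma\) must again be a unimodular finite Laurent polynomial and hence a monomial, forcing \(\sigma \in S^1 \rtimes \Z/2\); and the converse via \(\sigma' = \pm 1\) in~\eqref{eq:condiff} is identical (you merely spell out the monomial lemma that the paper takes for granted). Part 2 shares the paper's key reduction\emhyp{}if the two polynomial fibres over a loop coincided they would be preserved by \(D^a_\gamma - D^b_\gamma\), forcing the difference tensor along the loop to be polynomial, hence real\hyp{}analytic\emhyp{}but you finish differently: the paper fixes one loop over which the connections differ and then \emph{reparametrises} it, using~\eqref{eq:condiff} to choose \(\sigma\) so that \((\Phi\circ\sigma)\sigma'\) is no longer polynomial, whereas you build the loop from scratch, constant on half the circle and with a small excursion on the other half, so that the multiplier vanishes on an open interval without vanishing identically, contradicting analyticity. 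Your finish is self\hyp{}contained (it needs only that a nonzero analytic function cannot vanish on an interval), while the paper's stays entirely within the reparametrisation formula; both are valid. One step to tighten: over your chosen loop the holonomy is generally nontrivial, so \(\Psi^0_E\) identifies \(V_\gamma\) with the twisted space \(P_{\pol,g}\F^n\) rather than with \(L_\pol \C^n\), and there need be no constant sections to which to apply \(B_\gamma\). The repair is immediate: apply multiplication by \(B_\gamma\) to the exponential eigen\hyp{}sections \(t \mapsto e^{\lambda_j t}v_j\) (an analytic, nowhere\hyp{}vanishing frame lying in the polynomial fibre); membership of the image in \(P_{\pol,g}\F^n\) exhibits \(B_\gamma\) as a finite sum of exponentials, hence real\hyp{}analytic, which is all your contradiction requires.
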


\begin{proof}
 We shall consider the complex case so that we may talk about eigenvectors and eigenvalues of \(D_\gamma\).
 The real case may be deduced from this.

 \begin{enumerate}
 \item
  For this, consider the situation over a constant loop.
  There, \(L E\), resp.\ \(L_\pol E\), is \(E \otimes L \C\), resp.\
   \(E \otimes L_\pol \C\).
  The action of \(\Diff(S^1)\) on \(L E\) is given by its action on \(L \C\).
  Thus if
   \(\sigma \in \Diff(S^1)\)
  preserves \(L_\pol E\) then it must preserve \(L_\pol \C\) within \(L \C\).

  The map \(t \to e^{2 \pi i t}\) lies in \(L_\pol \C\).
  It is also the identification of \(S^1\) with \T. Under \(\sigma\) this transforms to
   \(t \to e^{2 \pi i \sigma(t)}\).
  As \(\sigma\) is a diffeomorphism of \(S^1\), this map must still be an identification of \(S^1\) with \T. The only polynomials which do this are those of the form
   \(t \to \nu e^{\pm 2 \pi i t}\)
  for \(\nu \in \T\).
  Hence if
   \(\sigma \in \Diff(S^1)\)
  preserves \(L_\pol E\) within \(L E\) then
   \(\sigma \in S^1 \rtimes \Z/2\).

  The converse is direct from the equation~\ref{eq:condiff} since if
   \(\sigma \in S^1 \rtimes \Z/2\)
  then \(\sigma' = \pm 1\) so:
  \[
   D_{\gamma \circ \sigma} (\alpha \circ \sigma) = \pm (D_\gamma \alpha) \circ \sigma.
  \]
  Hence \(\sigma\) maps eigenvectors of \(D_\gamma\) to eigenvectors of
   \(D_{\gamma \circ \sigma}\)
  and thus preserves \(L_\pol E\).

 \item
  As \(\nabla^a\) and \(\nabla^b\) are different, there will be some loop \(\gamma\) such that \(D_\gamma^a\) and \(D_\gamma^b\) differ.
  The difference will be a section \(\Phi\) of the bundle
   \(\mf{u}(\gamma^* E) \to S^1\),
  in other words an element of \(L_\gamma \mf{u}(E)\).

  If
   \(L_{\pol,\gamma}^a E = L_{\pol, \gamma}^b E\)
  then both are preserved under \(D_\gamma^a\) and \(D_\gamma^b\), hence under their difference.
  Thus \(\Phi\) must be an element of
   \(L_\pol \mf{u}(E)\).

  By examining equation~\ref{eq:condiff}, we see that under the action of a smooth self-map \(\sigma\) of the circle, \(\Phi\) transforms to
   \((\Phi \circ \sigma) \sigma'\).
  It is then a simple matter to find \(\sigma\) such that this is no longer a polynomial.
  Hence even if we were unlucky enough initially to choose a loop \(\gamma\) with
   \(L_{\pol,\gamma}^a E = L_{\pol,\gamma}^b E\)
  then we can find some other loop
   \(\gamma \circ \sigma\)
  over which the fibres of the polynomial bundles differ.
  \qedhere
 \end{enumerate}
\end{proof}

It is straightforward to show that the result about the action of \(\Diff(S^1)\) on \(L_\pol E\) generalises to the statement that the subgroup of \(\Diff(S^1)\) which preserves
 \(L^?
E\) is
 \(\Diff(S^1) \cap L^?
\C\) where the ``\(?\)'' represents some class of regularity of loop.

In the light of this result, it is perhaps surprising that there is an action of \(\Diff(S^1)\) on \(L_\pol E\) which covers the standard action of \(\Diff(S^1)\) on \(L M\).
This comes about because the \(\Diff(S^1)\)\enhyp{}action preserves the parallel transport operator.
Since all else was derived from that, we can make \(\Diff(S^1)\) act on \(L_\pol E\).

We start with the group \(\Diff_0^+(S^1)\) of orientation and basepoint preserving diffeomorphisms.
Since the whole diffeomorphism group is the semi-direct product of this with \(S^1 \rtimes \Z/2\), an action of this group together with the above action of \(S^1 \rtimes \Z/2\) will give an action of the whole diffeomorphism group.

An element of \(\Diff_0^+(S^1)\) lifts canonically to an element of \(\Diff_0^+(\R)\).
The image consists of those diffeomorphisms of \R which satisfy
 \(\sigma(t + 1) = \sigma(t) + 1\).
This allows \(\Diff_0^+(S^1)\) to act on paths as well as loops.

Let
 \(\sigma \in \Diff_0^+(S^1)\).
Recall that the bundle \(P^{M,0} E \to L M\) has fibre
 \(P^{M,0}_\gamma E = P( E_{\gamma(0)})\).
Thus as
 \(\gamma \circ \sigma(0) = \gamma(0)\),
the bundles \(P^{M,0} E\) and \(\sigma^*(P^{M,0} E)\) are genuinely the same bundle.
The bundle \(P^L E\), meanwhile, has fibre
 \(P^L_\gamma E = \Gamma(\gamma^* E)\).
Thus there is a natural isomorphism
 \(P^L E \to \sigma^*(P^L E)\)
given by
 \(\alpha \to \alpha \circ \sigma\).

With these two isomorphisms, the square:
\[
 \xymatrix{ P^{M,0} E \ar[r]^{\Psi_E} \ar@{=}[d]&
  P^L E \ar[d]^{\sigma} \\
  P^{M,0} E \ar[r]^{\Psi_E} &
  P^L E }
\]
does not commute.
To make it commute, we need to transfer one action of \(\sigma\) from one side to the other.
Clearly, the action of \(\sigma\) on \(P^L E\) restricts to the standard action on \(L E\) which we know does not preserve \(L_\pol E\).

It is also true that the action of \(\sigma\) on \(P^{M,0} E\) preserves \(L^{M,0,h_E^{-1}} E\) and \(L^{M,0,h_E^{-1}}\).
Thus is because the holonomy operator \(h_E\) is equivariant under the action of \(\Diff_0^+(S^1)\).
Therefore, the action of \(\sigma\) on \(P^{M,0} E\) when transferred to \(P^L E\) also restricts to an action on \(L E\) and on \(L_\pol E\).

In formul\ae, the two actions of \(\Diff^+_0(S^1)\) are as follows: any element of \(P_\gamma E\) can be written as
 \(\sum_j f^j \psi^0_E v_j\)
where \(\{v_1, \dotsc, v_n\}\)
is a basis for \(E_{\gamma(0)}\).
The usual action is:
\[
 \sigma \left( \sum_j f^j \psi^0_E v_j \right) = \sum_j (f^j \circ \sigma) \psi^0_E v_j
\]
and the new action is:
\[
 \sigma \left( \sum_j f^j \psi^0_E v_j \right) = \sum_j f^j \psi^0_E v_j.
\]

One way to make the distinction between the two actions is to have two views of the bundle \(L E \to L M\).
In one, a fibre \(L_\gamma E\) is inextricably linked to the points of \(\gamma(S^1)\).
In the other, the fibre \(L_\gamma E\) is linked only to the map \(\gamma\).
In the former, reparametrising the loop \(\gamma\) does not change \(\gamma(S^1)\) and so the fibres \(L_\gamma E\) and
 \(L_{\gamma \circ \sigma} E\)
are closely related.
Any reasonable\emhyp{}in this view\emhyp{}group action must preserve this relationship.
In the latter view, reparametrising the loop \(\gamma\) changes it and so there is no intrinsic relationship between the fibres \(L_\gamma E\) and
 \(L_{\gamma \circ \sigma} E\).
Therefore there is no special relationship for a reasonable group action to preserve.

\section{The Loop Space of a String Manifold}
\label{sec:string}

\subsection{Spin Bundles}

In finite dimensions, a spin bundle is an oriented bundle with an orthogonal structure together with a lift of the structure group from \(S O_n\) to a certain \(\Z/2\)\enhyp{}central extension, \(\spin_n\).

This can be generalised to infinite dimensions.
There is a group \(S O_J\) which takes the place of \(S O_n\).
This has a central extension \(\spin_J\) similar to the central extension \(\spin_n\) of \(S O_n\), although by \(S^1\) rather than \(\Z/2\).
The group \(S O_J\) acts naturally on a Hilbert space and is the identity component of another group written \(O_J\).
The groups \(S O_J\) and \(O_J\) are deformation retracts of groups \(S \gl_J\) and \(\gl_J\).
These groups all act naturally on a standard separable real Hilbert space.

The reason for the notation for \(S O_J\) and the other groups is that their definition depends on a choice of skew\hyp{}adjoint operator \(J\) acting on a real Hilbert space, say \(H\).
The definition is very similar to that of the group \(\gl_\res(H)\) of section~\ref{sec:polypol} except that the group \(\gl_\res(H)\) was defined in terms of an operator on the \emph{complexification} of \(H\).
The distinction is important.
We know that the completed tangent bundle of a loop space has a \(S O_\res\)\enhyp{}structure but this is not always the same as an \(S O_J\)\enhyp{}structure.
It can be shown that there is a choice of operator \(J\) on \(L^2(S^1, \R^n)\) such that \(S O_J = S O_\res\) if and only if \(n\) is even.

\begin{defn}
Let \(X\) be a smooth manifold.
\begin{enumerate}
\item Let \(\zeta \to X\) be a bundle of Hilbert spaces over \(X\) with structure group \(S O_J\).
A \emph{spin structure} on \(\zeta\) consists of the following data.

\begin{enumerate}
\item A lift of the structure group of \(\zeta\) from \(S O_J\) to \(\spin_J\).

\item A connection on the principal \(\spin_J\)\enhyp{}bundle.
If a connection on \(\zeta\) has already been specified the connection on the \(\spin_J\)\enhyp{}bundle should be a lift of that on \(\zeta\).
\end{enumerate}

\item Let \(\xi \to X\) be a vector bundle.
A \emph{(co\hyp{})spin structure} on \(\xi\) consists of the following data.

\begin{enumerate}
\item A (co\hyp{})orthogonal structure on \(\xi\) with structure group \(\gl_J\).

\item A spin structure on the associated bundle of Hilbert spaces, where the orthogonal structure is used to reduce the structure group of the Hilbert spaces from \(\gl_J\) to \(S O_J\).
\end{enumerate}

\item We further classify (co\hyp{})spin structures according to the classification of their (co\hyp{})orthogonal structures.

\item A \emph{(co\hyp{})spin manifold} is a manifold together with a choice of (co\hyp{})spin structure on its tangent bundle.
\end{enumerate}
\end{defn}

\begin{remark}
An astute student of finite dimensional spin theory will have noticed a significant departure in this definition.
In finite dimensions, the connection on the spin bundle is assumed to be a lift of a pre\hyp{}existing connection on the original bundle.
Here, we do not assume that such a connection exists (though if it does we require the lift).
Whilst we could easily include this assumption for the bundle of Hilbert spaces, it would not necessarily be possible in the more general case.
To have the connection on the spin bundle related to a connection on the original bundle we would have to be able to simultaneously trivialise the original bundle and \emph{orthogonally} trivialise the Hilbert bundle: namely, to have a \emph{locally trivial} (co\hyp{})orthogonal structure.
Obviously, it is more desirable to have the connection be related\emhyp{}in some fashion\emhyp{}to structure on the original bundle but we do not insist on this being a requirement.
\end{remark}

\subsection{String Manifolds}

There is an obstruction theory to a manifold being spin.
In finite dimensions, one needs the first two Stiefel\enhyp{}Whitney classes to vanish.
In infinite dimensions one needs a certain class in \(H^1(X; \Z/2)\) to vanish (to allow the reduction from \(O_\res\) to \(S O_\res\)) and a class in \(H^3(X;\Z)\) (to allow the lift).
When \(X = L M\), these classes are respectively the transgression of the second Stiefel\enhyp{}Whitney class of \(M\) and a certain class
 \(\lambda \in H^4(M;\Z)\)
which satisfies \(2 \lambda = p_1(M)\).

An interesting question to ask is what structure on \(M\) defines a spin structure on \(L M\).
There are several ways to answer this, \cite{sspt} being an important one which also gives rise to much more structure.
Let us outline a more modest approach.

Let \(M\) be an oriented Riemannian manifold.
Let \(Q \to M\) be the principal \(S O_n\)\enhyp{}bundle of the tangent bundle.
The class
 \(\lambda \in H^4(M;\Z)\)
is the obstruction to choosing a class in \(H^3(Q;\Z)\) which restricts to the canonical generator of \(H^3(S O_n; \Z)\) on each fibre of \(Q \to M\).
Now a class in the third integral cohomology class of a space is represented by a gerbe so let us choose a gerbe on \(Q\) which represents this class in \(H^3(Q;\Z)\).
As explained in \cite{jb}, there is a natural way to use a gerbe to define a line bundle on the corresponding loop space.
Thus we get a line bundle on \(L Q\).
If we have chosen our gerbe well, this line bundle\emhyp{}or rather its unit circle\emhyp{}defines the required lift from \(S O_\res\) to \(\spin_\res\).

The Levi\hyp{}Civita connection on \(M\) loops to a connection on \(L M\), which is also orthogonal and torsion\hyp{}free so it is reasonable to also call this the Levi\hyp{}Civita connection.
Thus we just need to lift this connection to our principal \(\spin_\res\)\enhyp{}bundle.
As is also explained in \cite{jb}, gerbes can be equipped with connections and this structure loops to a connection on the corresponding line bundle.
If we choose this connection data properly on the gerbe, the connection on the corresponding line bundle combines with the Levi\hyp{}Civita connection on \(L M\) to define a connection on the \(\spin_\res\)\enhyp{}bundle.
Hence we have a spin structure on \(L M\).

Whatever structure one chooses, the following proposition is immediate.

\begin{proposition}
 Let \(M\) be a Riemannian manifold of finite even dimension.
 Suppose that we can give \(M\) the structure needed to make \(L M\) a spin manifold with its usual orthogonal structure.
 Then the same structure makes \(L M\) a co\hyp{}spin manifold with the co\hyp{}orthogonal structure constructed in this paper.
\end{proposition}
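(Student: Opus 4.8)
The plan is to transport the given spin data across the bundle isomorphism \(\cos_r D\) which, as shown in section~\ref{sec:polybundles}, identifies the Hilbert bundle underlying the co-orthogonal structure with the one underlying the usual orthogonal structure. First I would orient \(M\) and, since \(\dim M\) is even, fix a skew-adjoint operator \(J\) on \(L^2(S^1,\R^n)\) with \(S O_J = S O_\res\), as recalled in section~\ref{sec:string}; with such a \(J\) the groups \(\gl_J\) and \(\spin_J\) appearing in the definition of a co-spin structure are the groups \(\gl_\res\) and \(\spin_\res\) appearing in the hypothesis. Write \(\zeta \to L M\) for the Hilbert bundle \(L^2 T M\) carrying the standard orthogonal structure and \(\zeta_r \to L M\) for the Hilbert bundle \(L^2_r T M\) of the co-orthogonal structure of Theorem~\ref{th:coriem}, the latter equipped with the inner product that makes \(\cos_r D\) a fibrewise isometry (section~\ref{sec:linco}).

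Next I would invoke the global form of Theorem~\ref{th:ldeiso}: \(\cos_r D\) is a bundle isomorphism \(\zeta_r \to \zeta\) intertwining the two actions of \(\gl_\res\) and --- by the normalisation of the inner product on \(\zeta_r\) --- carrying the standard \(O_\res\)-reduction of \(\zeta\) to an \(O_\res\)-reduction of \(\zeta_r\), hence, using the orientation, the \(S O_\res = S O_J\)-reduction across. Thus the co-orthogonal structure of Theorem~\ref{th:coriem}, having structure group \(\gl_\res = \gl_J\), is the co-orthogonal structure called for in the definition of a co-spin structure, and its associated bundle of Hilbert spaces is reduced to \(S O_J\) exactly as that definition prescribes. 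It then remains only to put a spin structure on \(\zeta_r\); for this I would pull back along \(\cos_r D\) the lift of the structure group of \(\zeta\) to \(\spin_\res\) together with the connection on the resulting principal bundle --- both of which exist by the hypothesis that \(L M\) is a spin manifold for its usual orthogonal structure --- thereby obtaining a lift to \(\spin_J\) and a connection on the spin bundle of \(\zeta_r\). By the definition in section~\ref{sec:string} this is a spin structure on \(\zeta_r\), hence a co-spin structure on \(T L M\), so \(L M\) is a co-spin manifold; and since \(\cos_r D\) is an isomorphism of bundles-with-structure-group, this co-spin structure inherits the classification --- nuclear and locally equivalent --- of the co-orthogonal structure of Theorem~\ref{th:coriem}.

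The steps that need genuine care are the connection and the verification that \(\cos_r D\) respects the whole chain \(\gl_\res \supseteq O_\res \supseteq S O_\res\), not merely the outer group. The latter is immediate from the proof of Theorem~\ref{th:ldeiso}, where conjugation by \(\cos_r D\) is shown to identify \(\gl_\res(L^2_r V)\) with \(\gl_\res(L^2 V)\) and, once the inner product on \(L^2_r V\) is normalised, the corresponding orthogonal subgroups with one another. For the connection I would take the connection on \(\zeta_r\) to be the one transported from \(\zeta\) along \(\cos_r D\), so that the transported \(\spin_J\)-connection lifts it tautologically; as the remark following the definition of a co-spin structure makes clear, we are not obliged to relate this connection to any connection on \(T L M\) itself --- the co-orthogonal structure being only locally equivalent rather than locally trivial, no such compatibility is available or demanded. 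Once these points are in place the rest of the argument is purely formal, which is why the proposition is stated as being immediate.
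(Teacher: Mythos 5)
Your argument is correct and is essentially the paper's own proof: the paper also transfers the spin structure from \(L^2 T M\) to \(L^2_r T M\) via the isometric isomorphism given by \(\cos_r D\) (isometric precisely because the inner product on \(L^2_r\) was normalised that way), noting \(S O_\res = S O_J\) in even dimensions. Your additional remarks about the connection and the subgroup chain merely spell out details the paper leaves implicit.
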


\begin{proof}
 By assumption, the bundle \(L^2 T M\) admits a spin structure.
 We transfer this to \(L^2_r T M\) via the isometric isomorphism \(L^2_r T M \cong L^2 T M\) which is an isomorphism of \(S O_J\)\enhyp{}bundles.
 Note that as the dimension of \(M\) is even, \(S O_\res = S O_J\).
\end{proof}

Let us say that a manifold \(M\) is a \emph{string} manifold if it equipped with sufficient structure to define a spin structure on \(L M\), and thus also a co\hyp{}spin structure.
Using the work of \cite{math/0809.3104} we obtain the following result.

\begin{corollary}
Let \(M\) be a string manifold.
Then \(L M\) admits a \(S^1\)\enhyp{}equivariant Dirac operator. \noproof
\end{corollary}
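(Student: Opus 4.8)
The plan is to read this off from the preceding theorem --- that a string manifold \(M\) makes \(L M\) an \(S^1\)\enhyp{}equivariant, per\hyp{}Hilbert\enhyp{}Schmidt, locally equivalent, co\hyp{}spin manifold --- together with the Dirac operator construction of \cite{math/0809.3104}, which was built to accept precisely such data. So the proof is in essence a verification that the hypotheses of that construction are met, and that the \(S^1\)\hyp{}action survives it. First I would unwind the definition of ``string manifold'' given above: by hypothesis \(M\) carries enough structure (an orientation, a Riemannian metric, and the gerbe\hyp{}with\hyp{}connection on the frame bundle \(Q\) whose looped line bundle supplies the \(\spin_\res\)\hyp{}lift and the compatible connection) to make \(L^2 T M\) a spin bundle. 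The preceding proposition transfers this spin structure across the isometric isomorphism \(L^2_r T M \cong L^2 T M\), equipping the co\hyp{}Riemannian structure of Theorem~\ref{th:coriem} with a co\hyp{}spin structure; nuclearity of the inclusion \(L^2_r \F^n \to L \F^n\) supplies the per\hyp{}Hilbert\enhyp{}Schmidt regularity demanded by the classification in \cite{math/0809.3104}. That reference then produces, from a per\hyp{}Hilbert\enhyp{}Schmidt locally equivalent co\hyp{}spin structure on the tangent bundle of a manifold, a densely\hyp{}defined Dirac operator on the associated spinor bundle; applying it to \(X = L M\) yields a Dirac operator.

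For the equivariance I would argue as follows. The rotation action of \(S^1\) on \(L M\) lifts to \(L T M\), and since \(S^1\) sits inside the subgroup \(S^1 \rtimes \Z/2\) of \(\Diff(S^1)\) which preserves the polynomial subbundle (by the Proposition of section~\ref{sec:polprop}), the whole tower \(L_\pol T M \subset L^2_r T M \subset L T M\) is \(S^1\)\hyp{}equivariant, as are the operators \(D_\gamma\), the isomorphisms \(\cos_r D_\gamma\), the induced co\hyp{}orthogonal and co\hyp{}spin structures, and the looped Levi\hyp{}Civita connection on \(L M\). Because the Dirac operator of \cite{math/0809.3104} is manufactured functorially from exactly this package, an \(S^1\)\hyp{}action on the input induces an \(S^1\)\hyp{}action on the spinor bundle under which the Dirac operator is equivariant; combined with the preceding paragraph this gives the claimed \(S^1\)\hyp{}equivariant Dirac operator.

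The only real obstacle is bookkeeping rather than mathematics: one must check that the notion of ``per\hyp{}Hilbert\enhyp{}Schmidt locally equivalent co\hyp{}spin structure'' produced here is literally the input format of the construction in \cite{math/0809.3104}, and in particular that the latter does not secretly require the co\hyp{}orthogonal structure to be locally \emph{trivial} (it does not --- generality at exactly this point was the stated motivation of that paper) nor require the \(\spin_\res\)\hyp{}connection to descend from a connection on \(L^2 T M\). Once that dictionary is confirmed, the corollary is immediate, which is why no separate proof is needed.
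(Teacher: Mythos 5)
Your proposal is correct and is exactly the derivation the paper intends: the corollary is stated with no proof precisely because it follows immediately from the preceding proposition (transferring the spin structure across \(L^2_r T M \cong L^2 T M\)) together with the Dirac operator construction of \cite{math/0809.3104}, with the \(S^1\)\enhyp{}equivariance coming from the rotation action preserving the polynomial subbundle and hence the whole co\hyp{}spin package. Your extra bookkeeping on equivariance and on matching the input format of \cite{math/0809.3104} only makes explicit what the paper leaves implicit.
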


\mybibliography{arxiv,articles,books,misc}

 \end{document}